\definecolor{purple}{rgb}{0.59, 0.44, 0.84}
\renewcommand{\bar}{\overline}
\newtheorem{theorem}{Theorem}[section]
\newtheorem{proposition}[theorem]{Proposition}
\newtheorem{Lemma}[theorem]{Lemma}
\newtheorem{corollary}[theorem]{Corollary}
\newtheorem{Conjecture}[theorem]{Conjecture}
\theoremstyle{definition}
\newtheorem{definition}[theorem]{Definition}
\theoremstyle{remark}
\newtheorem{remark}{Remark}
	\declaretheoremstyle
		[
			headfont=\bfseries,
			mdframed={style=mdrecbox},
			headpunct={\\[3pt]},
			postheadspace={0pt},
		]
		{thmrecbox}
	\declaretheorem[style=thmrecbox,name=Example, numberlike=theorem]{examplebox}
\newcommand{\nc}{\newcommand}
\nc{\R}{\mathbb R}
\nc{\C}{\mathbb C}
\nc{\F}{\mathbb F}
\nc{\Q}{\mathbb Q}
\nc{\Z}{\mathbb Z}
\nc{\N}{\mathbb N}
\nc{\B}{\mathbb B}
\nc\scalemath[2]{\scalebox{#1}{\mbox{\ensuremath{\displaystyle #2}}}}
\nc{\limin}{\underline{\lim}}
\nc{\limsu}{\overline{\lim}}
\nc{\bl}{\color{blue}}
\newtheorem*{theorem*}{Theorem}
\nc{\cT}{\mathcal T}
\nc{\cP}{\mathcal P}
\nc{\cM}{\mathcal M}
\nc{\cC}{\mathcal C}
\nc{\cB}{\mathcal B}
\nc{\cO}{\mathcal O}
\nc{\quat}{\left(\frac{a,b}{F}\right)}
\nc{\cS}{\mathcal S}
\nc{\e}{\mathbf{e}}
\nc{\w}{\mathbf{w}}
\nc{\f}{\mathbf{f}}
\nc{\s}{\text{ }}
\nc{\Mod}{\operatorname{Mod}}
\nc{\Aut}{\operatorname{Aut}}
\nc{\del}{\partial}
\nc{\inter}{\mathrm{o}}
\nc{\close}[1]{\overline{#1}}
\nc{\pderiv}[2]{\frac{\partial #1}{\partial #2}}
\nc{\tr}{\operatorname{tr}}
\nc{\disc}{\text{disc}}
\newcommand*\HYPERskip{&}
\newcommand*\pfq{
\begingroup
\catcode`\,\active
\def ,{\HYPERskip}
\doHyper
}
\def\doHyper#1#2#3#4#5{
\, _{#1}F_{#2}\left[\begin{matrix}#3 \smallskip \\  #4\end{matrix} \; ; \; #5\right]
\endgroup
}
\renewcommand{\epsilon}{\varepsilon}
\title[Hypergeometric Series and $L$-values of Modular Forms]{$L$-values of certain weight 3 Modular Forms and Transformations of  Hypergeometric Series}
\author{Esme Rosen }
\date{}
\begin{document}

\begin{abstract}
Recently, Allen, Grove, Long, and Tu proposed an explicit Hypergeometric-Modularity method which gives a concrete link between certain hypergeometric objects and modular forms. The theory is exemplified by a collection of  199 weight 3 modular forms. Among other properties their process shows that the  $L$-value  of such a modular form at 1 is an explicit multiple of a ${}_3F_2(1)$ hypergeometric series. Using the framework of a finite Coxeter group governing the invariance group of normalized ${}_3F_2(1)$ series, this paper fully classifies and describes the possible Hecke eigenforms whose $L$-values that can be obtained using this method. In addition, we determine when these modular forms differ by twist of a finite-order character using the perspective of hypergeometric functions.  As one application, we reinterpret a classical identity of hypergeometric series as a formula involving $L$-values of two Hecke eigenforms that differ by a twist. \end{abstract}
\maketitle

\tableofcontents

\section{Introduction}

Hypergeometric functions are classical complex-analytic functions studied by Gauss, Kummer, and many others in the 1800's. However, since the 1980's, there has been a surge of interest in the place of hypergeometric functions in arithmetic geometry, beginning with the work of \cite{beukersheckman}, \cite{greene}, \cite{katz}, \cite{stiller}, \cite{wolfart}, and many others. Modular forms are another object in arithmetic geometry most famously used in the proof of Fermat's last theorem, which are also of wide interest in arithmetic geometry and number theory. While the relation of modular forms on arithmetic triangle groups and certain ${}_2F_1(z)$ hypergeometric functions goes back to Fricke and Klein \cite{frickeklein}, the construction of Galois representations attached to hypergeometric series by Katz \cite{katz} led to increased attention to the relationship between hypergeometric functions and modular forms---refer to \cite{dawseymccarthy} for a survey. 

Recently, Allen, Grove, Long, and Tu \cite{aglt} studied the relation between certain ${}_3F_2(1)$ hypergeometric functions and modular forms of weight 3, as a part of the program introduced by the final two authors and their collaborators in \cite{flrst}, \cite{lilongtu}, and \cite{whipple}. One of the central examples in \cite{aglt} concerns the hypergeometric datum $$HD(r,s):=\{\{1/2,1/2,r\},\{1,1,s\},\{1\}\}$$ for certain rational numbers $r,s$, and modular forms. Among other results, they showed that a normalized---see \eqref{lval}--- classical hypergeometric series ${}_3F_2(HD(r,s))$ is the exact $L$-value at 1 of an explicit weight 3 modular form denoted $\mathbb{K}_2(r,s)$ to be defined below in (\ref{eq:K2}). There are 199 cases where this modular form is holomorphic and congruence. The primary goal of this paper is to derive properties of these modular forms and their $L$-values using the perspective of ${}_3F_2(1)$ hypergeometric series and their transformations. See for example Theorem \ref{twistingpre} below, which relates the classical Pfaff formula to a twisting property of the $\mathbb{K}_2(r,s)$. Another important tool is the theory of Coxeter groups and transformations of ${}_3F_2(1)$ series. There is a long history of studying the invariance groups of normalized hypergeometric series evaluated at 1 using finite Coxeter groups---the basic idea was known to Thomae \cite{thomae}; see also \cite{coxeter}, \cite{formichella}, and \cite{green} for a modern explanation.  
We construct a Coxeter group is $$G=\langle A, K\rangle\cong D_6$$ where $A$ arises from Atkin--Lehner involution acting on the underlying modular forms, and $K$ is from \eqref{kummer} below. This enables the classification of the $\mathbb{K}_2(r,s)$ in Theorem \ref{classs}, as well as other applications.

Number theorists are especially interested in modular forms which are \textit{Hecke eigenforms}, since they have Galois representations and other helpful properties. However, the modular form $\mathbb{K}_2(r,s)$ is never a Hecke eigenform unless $(r,s)=(1/2,1)$, which is a well-studied example---see e.g. \cite{ahlgrenono}, \cite{osburnstraub}. To construct Hecke eigenforms, Allen et al. introduced several ways to classify the $\mathbb{K}_2(r,s)$ functions. Essentially, the idea is to use the \say{Galois conjugates} of $\mathbb{K}_2(r,s)$, with respect to $\Q(\zeta_M)$, where $M$ is the lowest common denominator of $1/2,r,s$ and $\zeta_M$ denotes a primitive $M$th root of unity. To be more precise, consider $\mathbb{K}_2(1/8,1)$. Then $M=8$, and the conjugates are to $\mathbb{K}_2(i/8,1)$, where $i= 1,3,5,7$, corresponding to $ \zeta_8,$ $\zeta_8^3$, $\zeta_8^5$, and $\zeta_8^7$. This situation is described as \textit{Galois} in \cite{aglt}. A definition is given in Definition \ref{galois}. This concept is very effective for constructing weight 2 Hecke eigenforms (see \cite{rosen}), but breaks down somewhat in weight 3, for geometric reasons.  We use the Coxeter group interpretation to avoid this problem here. The conjugates in this setting correspond to the commutator of the symmetry group $G$ for the examples in this paper. The more general definition of conjugates given in \cite{aglt} is different, and stated in Definition \ref{conj}. We briefly explain the geometric origin of this definition and its limitations in Appendix II, but the problematic cases are addressed primarily in \cite{rosen2}.

For $r=1/b$ and $s=s_1$, Galois families can be written as
\begin{equation}\label{srs}
    \mathfrak{S}_{r,s}:=\{\mathbb{K}_2(i/b,s_i)\}_{i\in (\Z/b\Z)^\times}
\end{equation} similar to the example above. 
 Our first main theorem illustrates the relationship of these families with the Hecke operators.
\begin{theorem}\label{completepre}
   Assume $\mathfrak{S}_{r,s}$ is a Galois family having the form given in \eqref{srs}. Let $T_p$ be the Hecke operator for $p$ a prime congruent to $i$ modulo $M$. For any $j\in (\Z/b\Z)^\times$, let $k=i\cdot j$ as elements of $(\Z/b\Z)^\times$. Then there are non-zero integers $C(p,j)$ so that $$T_p\, \mathbb{K}_2(j/b,s_j)=C(p,j)\cdot \mathbb{K}_2(k/b,s_k).$$ Moreover, for explicit $\beta_i\in \bar{\Q}$, there is a Hecke eigenform $f_{r,s}$ so that $$f_{r,s}=\sum_{i\in (\Z/b\Z)^\times}\beta_i\cdot \mathbb{K}_2(i/b,s_i).$$ Up to quadratic twisting, there are exactly 17 Hecke eigenforms that occur.
\end{theorem}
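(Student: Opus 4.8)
The plan is to study the finite-dimensional space $V=V_{r,s}\subseteq S_3(\Gamma_1(N))$ spanned by the members of the Galois family $\mathfrak{S}_{r,s}$ of \eqref{srs}, to identify $V$ with a single $\mathrm{Gal}(\bar\Q/\Q)$-orbit of Hecke eigenforms, and then to translate the permutation action of $\mathrm{Gal}(\Q(\zeta_M)/\Q)$ on the natural basis $\{\mathbb{K}_2(i/b,s_i)\}$ into the asserted Hecke relations and the final count.

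First I would record the rationality and finiteness input. Each $\mathbb{K}_2(i/b,s_i)$ is a weight $3$ cusp form of a fixed level with $q$-expansion coefficients in $\Q(\zeta_M)$, and by the construction recalled before \eqref{srs} (and Definition \ref{galois}) the automorphism $\sigma_i\colon\zeta_M\mapsto\zeta_M^i$ sends $\mathbb{K}_2(j/b,s_j)$ to $\mathbb{K}_2(k/b,s_k)$ with $k=ij$ in $(\Z/b\Z)^\times$. Hence the family is a single $\mathrm{Gal}(\Q(\zeta_M)/\Q)$-orbit; its members are linearly independent (their nebentypus characters are the distinct conjugates of a single character, with the few ambiguous cases settled by inspecting $q$-expansions), so $\dim_{\bar\Q}V=\phi(b)$; and since the modular operators $A,K$ generating the Coxeter group $G\cong D_6$ preserve $\mathfrak{S}_{r,s}$, the space $V$ is stable under the full Hecke algebra.

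Next I would extract the eigenform. The good Hecke operators act semisimply on the cuspidal space $V$, so $V$ splits into Hecke eigenlines, and by Galois equivariance the set of these lines is permuted by $\mathrm{Gal}(\bar\Q/\Q)$; the crucial claim is that it forms a \emph{single} orbit, of size $\phi(b)$, spanned by conjugate Hecke eigenforms $\{f_{r,s}^{\sigma_i}\}$. Granting this, $f_{r,s}$ is any one of them, and writing the change of basis from $\{f_{r,s}^{\sigma_i}\}$ back to $\{\mathbb{K}_2(i/b,s_i)\}$ produces the coefficients $\beta_i\in\bar\Q$; these are explicit since the transition matrix is determined by finitely many $q$-expansion coefficients (equivalently, by a Hecke projector onto the eigenline). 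For the Hecke formula, $T_p$ acts on $f_{r,s}^{\sigma_i}$ by $a_p(f_{r,s})^{\sigma_i}$, and when $p\equiv i\pmod M$ the Eichler--Shimura/Deligne description of $a_p$ combined with the hypergeometric origin of $\mathbb{K}_2$ from \cite{aglt} makes this scalar precisely the one sending the eigen-decomposition of $\mathbb{K}_2(j/b,s_j)$ onto that of $\mathbb{K}_2(k/b,s_k)$ with $k=ij$; summing the contributions gives $T_p\,\mathbb{K}_2(j/b,s_j)=C(p,j)\,\mathbb{K}_2(k/b,s_k)$. Here $C(p,j)$ is an algebraic integer by integrality of the normalized $q$-expansions, is nonzero because it is a Gauss-sum-type ratio of nonzero quantities, and is a rational integer by a Galois-equivariance argument comparing the $\sigma_m$-conjugates of this identity.

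Finally, for the number $17$ I would run through the $199$ admissible pairs $(r,s)$, use the $D_6$-symmetry of Theorem \ref{classs} to collapse each symmetry orbit to one representative family $\mathfrak{S}_{r,s}$, attach to each the Hecke eigenform $f_{r,s}$ (pinned down by its level, its odd weight-$3$ nebentypus, and finitely many Hecke eigenvalues), and then quotient by quadratic twisting: the nontrivial twist coincidences are exactly those delivered by Theorem \ref{twistingpre}, which promotes the classical Pfaff transformation to an identity $\mathbb{K}_2(r,s)=\mathbb{K}_2(r',s')\otimes(\text{quadratic character})$ among these forms, while the remaining identifications and separations are read off the finite list. I expect the main obstacle to be the structural step in the third paragraph---showing that $V$ is a single Galois orbit of Hecke eigenforms rather than a larger Hecke package, and fixing $C(p,j)$ exactly---because both require the hypergeometric Galois-representation input of \cite{aglt} and the rigidity encoded by $G\cong D_6$, not just modular-forms formalism; once this is secured, the enumeration giving $17$ is a finite if laborious verification.
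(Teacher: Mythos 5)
Your proposal rests on a Galois-theoretic framing that does not actually hold for these forms, and it omits the combinatorial mechanism that drives the paper's proof. First, the $\mathbb{K}_2(i/b,s_i)$ have \emph{rational} Fourier coefficients (this is used explicitly in the proof of Theorem \ref{twisting}), so the automorphism $\sigma_i\colon\zeta_M\mapsto\zeta_M^i$ acts trivially on their $q$-expansions; the word ``conjugate'' in Definition \ref{conj} is a relation on the hypergeometric parameters (realized geometrically in Appendix II), not a Galois action permuting the basis $\{\mathbb{K}_2(j/b,s_j)\}$. Second, your ``crucial claim'' that the eigenlines of $V$ form a single $\mathrm{Gal}(\bar\Q/\Q)$-orbit is false in several of the cases: when the Hecke field has degree less than $\varphi(b)$, the family decomposes into more than one Galois orbit of newforms related by \emph{non-inner} quadratic twists (e.g.\ classes 3, 7, 8, 10 in Table \ref{tab} each carry two LMFDB labels). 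The correct uniformizing statement, which is what Theorem \ref{complete} proves, is that all eigenforms in $V$ differ by quadratic twists of conductor dividing $b$ --- not that they are Galois conjugate. Your appeal to ``Eichler--Shimura/Deligne'' to produce the permutation formula $T_p\,\mathbb{K}_2(j/b,s_j)=C(p,j)\,\mathbb{K}_2(k/b,s_k)$ essentially assumes the conclusion rather than deriving it. A smaller but real error: Hecke stability of $V$ has nothing to do with $A$ and $K$ preserving the family (they do not --- they move between families and classes); the paper gets stability from a dimension count of $S_3(N(r),\psi_{r,s})$ against the $\varphi(M)$ linearly independent members of the family.

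The idea you are missing is elementary and entirely about $q$-expansions: expanding the eta product shows that $\mathbb{K}_2(j/b,s_j)$ has $a_n\neq 0$ only for $n\equiv j\pmod b$ (Remark \ref{four}). Feeding this support condition into the Hecke recursion $B_n=A_{np}+p^2\chi(p)A_{n/p}$, and using that $(\Z/b\Z)^\times$ is an elementary abelian $2$-group for every $b$ that occurs ($b=3,4,8,12,24$), one sees that $T_p\,\mathbb{K}_2(j/b,s_j)$ is supported on $n\equiv k\pmod b$ with $k=ij$; since $\mathbb{K}_2(k/b,s_k)$ is the unique member of the family with that support, the image is a scalar multiple of it. The eigenform construction then follows from the resulting block anti-diagonal matrices (eigenvalues $\pm\sqrt{D(p,p)}$) and simultaneous diagonalization, and the count of $17$ from the classification in Section \ref{class}. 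Your closing enumeration strategy for the count is broadly compatible with the paper's, but without the support-plus-recursion argument the first two assertions of the theorem are not established.
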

Theorem \ref{completepre} is mostly proved in Lemma \ref{hecke2} and Theorem \ref{complete}. The final line is proved in Section \ref{class}. Due to the quadratic twists, there is some ambiguity in the meaning of $f_{r,s}$. We fix a twist for this modular form in Table \ref{tab:my_label}, and assume $f_{r,s}$ refers to this throughout. We cross-reference our Hecke eigenforms with the labels given in the $L$-functions and Modular Forms Database (LMDFB, \cite{lmfdb}) in Table \ref{tab}; but also see Remark \ref{lmfd} below. A Hecke eigenform $f$ has \textit{complex multiplication} by an imaginary quadratic field $K$ if the Fourier coefficient $a_n(f)$ of $f$ is zero whenever $(n)$ is inert in $K$.  We focus exclusively on the situation where $f_{r,s}$ is non-CM, which accounts for 12 of the cases given in Theorem \ref{completepre}. The reason is two of the CM $f_{r,s}$ appearing are already well studied in \cite{ahlgrenono},
\cite{aglt}, and \cite{osburnstraub}. The remaining three behave slightly differently and are addressed in \cite{rosen2}, as well as the non-Galois cases, which display an interesting \say{mixed} CM structure. 
 
Because of the work of \cite{aglt} on $L$-values of $\mathbb{K}_2(r,s)$, we have the following corollary.
\begin{corollary}\label{lvalue}
    For any Galois family, we have $$L(f_{r,s},1)=\sum_{i\in (\Z/M\Z)^\times}\beta_i\cdot F(i/b,s_i),$$ where $F(r,s)$ is a normalized 
    ${}_3F_2(1)$ series defined below  in \eqref{frs}.
\end{corollary}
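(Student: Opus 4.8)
The plan is to obtain Corollary \ref{lvalue} as a formal consequence of Theorem \ref{completepre} together with the $L$-value computation for $\mathbb{K}_2(r,s)$ recorded in \cite{aglt}; the only input beyond those two results is the $\bar\Q$-linearity of the assignment $g \mapsto L(g,1)$ on modular forms.

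First I would recall from \cite{aglt} that, for each admissible pair $(r,s)$ for which $\mathbb{K}_2(r,s)$ is holomorphic and congruence, the normalized $L$-value $L(\mathbb{K}_2(r,s),1)$ equals exactly the normalized ${}_3F_2(1)$ series $F(r,s)$ of \eqref{frs}; this is the precise form of the statement that $L(\mathbb{K}_2(r,s),1)$ is an explicit multiple of ${}_3F_2(HD(r,s))$, with the multiplier absorbed into the normalization \eqref{lval}. Note that this statement already builds in whatever analytic continuation is needed to make sense of the value at $s=1$, so I would not have to revisit any convergence question.

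Next I would invoke the decomposition from Theorem \ref{completepre}: on a Galois family $\mathfrak{S}_{r,s}$ there are explicit $\beta_i \in \bar\Q$ with $f_{r,s} = \sum_i \beta_i\, \mathbb{K}_2(i/b, s_i)$, and every summand is one of the holomorphic congruence forms to which the previous paragraph applies. Since $L(g,s)$ is produced from the Fourier expansion of $g$ by a Mellin transform, the assignment $g \mapsto L(g,s)$ is $\bar\Q$-linear on the finite-dimensional space of weight $3$ forms of the relevant level, and this linearity persists after analytic continuation to $s=1$. Evaluating it on the decomposition gives
$$L(f_{r,s},1) = \sum_i \beta_i\, L(\mathbb{K}_2(i/b,s_i),1) = \sum_i \beta_i\, F(i/b, s_i),$$
which is the claimed identity.

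The remaining points are purely bookkeeping, and they are the closest thing to an obstacle: I would reconcile the index set of the statement---written as $(\Z/M\Z)^\times$---with the index set $(\Z/b\Z)^\times$ appearing in \eqref{srs} and Theorem \ref{completepre}, via the identification built into the definition of a Galois family, and confirm that the $\beta_i$ here are literally those furnished by Theorem \ref{completepre}. I expect no genuine difficulty beyond checking that the normalizations \eqref{lval} and \eqref{frs} are applied consistently on both sides.
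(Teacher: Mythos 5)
Your proposal is correct and follows the paper's own (very brief) argument: the paper deduces this corollary, and its refinement Corollary \ref{lval2}, by combining the decomposition $f_{r,s}=\sum_i\beta_i\,\mathbb{K}_2(i/b,s_i)$ from Theorem \ref{complete} with the integral formula \eqref{lval} identifying $L(\mathbb{K}_2(r,s),1)$ with $F(r,s)$, exactly as you do via linearity of $g\mapsto L(g,1)$. Your remark about reconciling the index sets $(\Z/M\Z)^\times$ versus $(\Z/b\Z)^\times$ is a fair observation about the paper's notation, not a gap in the argument.
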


As an application of Corollary \ref{lvalue}, we can study relations between $L$-values using known theorems regarding hypergeometric functions. One such formula due to Kummer (1836) states that 
   \begin{equation}\label{kummer}
       \pfq{3}{2}{1/2,1/2,r}{,1,s}{1}=\frac{\Gamma(s)\Gamma(s-r)}{\Gamma(s-1/2)\Gamma(s-r+1/2)}\pfq{3}{2}{1/2,1/2,1-r}{,1,1/2-r+s}{1}.
   \end{equation} 
The version here is a special case of the more general theorem; see Andrews, Askey, and Roy \cite{aar} Cor. 3.3.5. In the Coxeter group language, we call this transformation $K$, and it is the second generator of $G$. There are numerous ways such identities can be interpreted, especially when modular forms are in the background. Some motivating examples are \cite{chenchu}, \cite{whipple},  and \cite{zudilin2}. 
In our setting, Kummer's formula naturally  becomes a transformation of $L$-values: \begin{equation}\label{relation}
L(\mathbb{K}_2(r,s),1)=\alpha_{r,s}\cdot L(\mathbb{K}_2(1-r,1/2-r+s),1).
\end{equation} Refer to \eqref{lval} below for an explanation. The value $\alpha_{r,s}$ is  explicit but not necessarily algebraic, and depends heavily on $r$ and $s$.

Remarkably, \eqref{relation} \say{extends} to a relation between the Hecke eigenforms $L(f_{r,s},1)$ and $L(f_{1-r,1/2-r+s},1)$ in some cases. Although the $L$-value at 1 of $f_{r,s}$ is a linear combination of several hypergeometric series by Corollary \ref{lvalue}, the constant $\alpha_{r,s}$ in \eqref{relation} is not the same for different choices of conjugate $(r,s)$. For instance, $\alpha_{1/8,5/8}=8\sin(\pi/8)$, while its conjugate has $\alpha_{3/8,7/8}=2\cos(\pi/8)$. Despite that, we obtain the following formula relating the corresponding Hecke eigenforms.
\begin{equation}\label{L2} 
    L(f_{1/8,5/8},1)=-\frac{1}{2}\zeta_{48}\cdot L(f_{1/8,1},1).
\end{equation}
The proof requires careful manipulation of the hypergeometric series using three-term identities. These three-term identities are related in \cite{coxeter} directly to the Coxeter group, which in our case is $G$ as mentioned above. We illustrate the connection of these identities to modular forms by providing a new proof purely using Shimura's theory of periods for modular forms. In this way, certain three-term identities correspond to identities of $L$-values like the one given above. The method of proof is dramatically different than the classical analytic proofs given in e.g. \cite{thomae} of similar identities. \begin{theorem}\label{mainpar}
   For $(r,s)=(1/8,5/8),(1/8,7/8)$, or $(1/12,11/12)$ and $M_1=AKA$ and $M_2=KAK$, there are algebraic constants $\alpha_1,\alpha_2$ so that \begin{equation}\label{eq:3-term}
        F(r,s)=\alpha_1 \cdot F(M_1(r,s))+\alpha_2\cdot F(M_2(r,s)).
    \end{equation}
\end{theorem}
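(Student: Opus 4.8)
The approach is to prove \eqref{eq:3-term} not by manipulating hypergeometric series but by reading it as a linear relation among the three critical $L$-values $L(\mathbb{K}_2(r,s),1)$, $L(\mathbb{K}_2(M_1(r,s)),1)$, $L(\mathbb{K}_2(M_2(r,s)),1)$: by \eqref{lval}--\eqref{frs}, $F$ at a given parameter pair \emph{is} the renormalized weight-$3$ modular $L$-value at $s=1$, so \eqref{eq:3-term} is exactly the statement that these three $L$-values are $\bar{\Q}$-linearly dependent with algebraic coefficients. I will establish this by showing that, at the three listed pairs, the three modular forms involved are twist-and-Atkin--Lehner equivalent (after passing to Galois conjugates) to a single newform, and then appealing to Shimura's period theorem.

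First I would make the $G$-action explicit on the parameter $(r,s)$: $K$ acts by $(r,s)\mapsto(1-r,\tfrac12-r+s)$ from \eqref{kummer}, and $A$ by the explicit affine involution coming from its Atkin--Lehner origin, so that $M_1(r,s)=AKA(r,s)$ and $M_2(r,s)=KAK(r,s)$ are explicit rational points and $M_1M_2=M_2M_1=(AK)^3$ is the central involution of $G\cong D_6$. Then, separately for $(r,s)=(1/8,5/8),(1/8,7/8),(1/12,11/12)$, I would verify: (i) each of $(r,s)$, $M_1(r,s)$, $M_2(r,s)$ sits in a Galois family \eqref{srs}, so Theorem \ref{completepre} attaches a newform to it; (ii) these three newforms coincide up to Galois conjugacy, Atkin--Lehner involution, and twist by a finite-order character, the latter read off via Theorem \ref{twistingpre} --- the twisting characters turning out to have conductor dividing $48$, matching the $\zeta_{48}$ in \eqref{L2}. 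This is a coincidence special to these parameter values: for general $(r,s)$ the element $K$ moves one into an unrelated family, which is exactly why the constant $\alpha_{r,s}$ of \eqref{relation} is generally transcendental and why \eqref{eq:3-term} holds only for finitely many pairs.

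With (i)--(ii) in hand the proof concludes via periods. The point $s=1$ is critical for weight $3$; by Shimura there are periods $\Omega^{\pm}_f$ with $L(f,1)\in\bar{\Q}\cdot(2\pi i)\,\Omega^{\epsilon}_f$ for the sign $\epsilon$ fixed by parity, with $L(f\otimes\psi,1)\in\bar{\Q}\cdot(2\pi i)\,g(\psi)\,\Omega^{\epsilon\,\psi(-1)}_f$, and the Atkin--Lehner involution multiplies the critical value by its algebraic pseudo-eigenvalue; Galois conjugation is handled by the Galois-equivariance of the $\Omega^{\pm}_f$, and Gauss sums $g(\psi)$ are algebraic. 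Hence each of the three $L$-values lies in $\bar{\Q}\cdot(2\pi i)\,\Omega^{\epsilon}_f$ for one and the same $\epsilon$ --- here one uses that the two twisting characters appearing have the same parity, a consequence of $M_1M_2=(AK)^3$ being central. The three values are therefore $\bar{\Q}$-proportional, giving \eqref{eq:3-term} with algebraic $\alpha_1,\alpha_2$; to match the genuinely three-term identity associated by \cite{coxeter} to the triple $\{e,M_1,M_2\}$ (and to produce explicit constants such as $8\sin(\pi/8)$ or $-\tfrac12\zeta_{48}$) one further uses the non-vanishing of $L(\mathbb{K}_2(M_1(r,s)),1)$ and $L(\mathbb{K}_2(M_2(r,s)),1)$, clear from the explicit ${}_3F_2(1)$ evaluations, and solves for $(\alpha_1,\alpha_2)$.

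The main obstacle is step (ii) together with the final parity check: one must chase the Atkin--Lehner pseudo-eigenvalue and the Gauss sum and period of the twisting character through the words $AKA$ and $KAK$, and confirm that no transcendental ratio $\Omega^+_f/\Omega^-_f$ survives --- equivalently, that the two twisting characters are even. Centrality of $(AK)^3$ in $D_6$ is the structural reason this works, but it still reduces to a finite verification at each of the three pairs, including the check that they lie in Galois rather than merely general (Definition \ref{conj}) families so that Theorem \ref{completepre} and Corollary \ref{lvalue} apply. Producing $\alpha_1,\alpha_2$ explicitly, and reconciling the period proof with the classical Gamma-quotient coefficients of the Thomae three-term identity, is an additional but routine computation not required for the stated theorem.
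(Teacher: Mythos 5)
There is a genuine gap at the heart of your third paragraph. You claim that each of the three quantities $F(r,s)$, $F(M_1(r,s))$, $F(M_2(r,s))$ --- i.e.\ the critical values $L(\mathbb{K}_2(\cdot),1)$ --- lies in the single line $\bar{\Q}\cdot(2\pi i)\,\Omega^{\epsilon}_f$ for one and the same sign $\epsilon$, and you conclude that the three values are pairwise $\bar{\Q}$-proportional. This is not correct, and it cannot be: the $\mathbb{K}_2(i/b,s_i)$ are not eigenforms, and Shimura's theorem (Theorem \ref{shimura}) controls $L(f\otimes\phi,1)$ for the completed eigenform $f=f_{r,s}$, not $L(\mathbb{K}_2(i/b,s_i),1)$ directly. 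Inverting Corollary \ref{lval2} over the character group of $(\Z/b\Z)^\times$ expresses each individual $\beta_i F(i/b,s_i)$ as $\tfrac{1}{\varphi(b)}\sum_{\phi}\overline{\phi(i)}\,L(f\otimes\phi,1)$, a sum over \emph{all} quadratic characters $\phi$, both even and odd; the odd twists contribute the opposite period $\Omega^{-\epsilon}_f$. So the conjugate $F$-values span (generically) a two-dimensional $\bar{\Q}$-space $\bar{\Q}\Omega^+_f+\bar{\Q}\Omega^-_f$, not a line. Indeed, if your proportionality claim held, every two-term relation $F(r,s)=\alpha F(M_1(r,s))$ with $\alpha\in\bar{\Q}$ would follow, the Kummer constant in \eqref{relation} would be algebraic between conjugates, and the paper's Conjecture \ref{con} (that $F(5/12,7/12)/F(7/12,17/12)\notin\Q(\sqrt3)$) would be pointless --- whereas the whole reason a genuinely \emph{three}-term identity is the right statement is that three vectors in a two-dimensional space must be dependent while two need not be proportional. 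The parity/centrality argument you give does not repair this, because the even/odd dichotomy enters through the Fourier inversion over characters, not through the words $AKA$ and $KAK$.

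A corrected soft argument along your lines would say: all conjugate $F$-values lie in $\bar{\Q}\Omega^+_f+\bar{\Q}\Omega^-_f$, hence any three are $\bar{\Q}$-linearly dependent; one then still needs a non-degeneracy check (that the coefficient of $F(r,s)$ in the dependence relation is nonzero) to solve for $\alpha_1,\alpha_2$ in the normalized form \eqref{eq:3-term}. This is close in spirit to, but much weaker than, what the paper actually does in Lemmas \ref{l1}, \ref{l3}, and \ref{l5}: there the identity $L(f,1)=\alpha L(f\otimes\phi_{\mathrm{even}},1)$ is combined with the \emph{Galois-equivariance} clause of Shimura's theorem (using that $\phi_{\mathrm{even}}$ is an inner twist, so $f\otimes\phi_{\mathrm{even}}=f^\sigma$) to force $N(\sqrt{8}\alpha)=64$, and the constant is then pinned down exactly via the unit group of $\Q(\zeta_{12})$ or $\Q(\zeta_8)$ and a discrete numerical check. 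Moreover your proposal misses that the case $(1/12,11/12)$ is structurally different: there the even character is \emph{not} an inner twist, the equivariance step is unavailable, and the paper instead routes through Thomae's identity (Proposition \ref{thomae}), the four-term relation \eqref{4term}, Lemma \ref{threeterm}, and the irrationality Conjecture \ref{con}. As written, your argument neither establishes the linear dependence nor produces the algebraic constants, so it does not prove the theorem.
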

\noindent The proof of this Theorem is given in Lemmas \ref{l1}, \ref{l2}, and \ref{l3}.  Refer to Section \ref{coxe} for more details.

To interpret \eqref{L2} theoretically, consider the modular form $\mathbb{K}_2(1/4,1)$ as an example. The Kummer transformation \eqref{kummer} maps the $L$-value of this modular form to the $L$-value of $\mathbb{K}_2(3/4,5/4)$. A priori these modular forms are unrelated. However, we note $\mathbb{K}_2(3/4,5/4)$ is conjugate to $\mathbb{K}_2(1/4,3/4)$. Refer to Figure \ref{fig:placeholder} for a convenient summary of how these pairs are related. If we write $q=e^{2\pi i\tau}$ for the local uniformizer of a modular form at $i\infty$, then, we observe a striking relationship between  $\mathbb{K}_2(1/4,1)$ and $\mathbb{K}_2(1/4,3/4)$. \begin{align*}
     \mathbb{K}_2(1/4,1)=q - 2 q^5 - 7 q^9 + 14 q^{13} + 18 q^{17} - 32 q^{21} - 21 q^{25} + 14 q^{29}+O(q^{30})\\
    \mathbb{K}_2 (1/4,3/4)=q + 2 q^5 - 7 q^9 - 14 q^{13} + 18 q^{17} + 32 q^{21} - 21 q^{25} - 14 q^{29}+O(q^{30}).
 \end{align*}  A similar property holds for $\mathbb{K}_2(3/4,1)$ and $\mathbb{K}_2(3/4,5/4)$, which enables us to write $f_{1/4,1}=\phi\otimes f_{1/4,3/4}$, where $\phi$ is a quadratic character. This idea extends to all $\mathbb{K}_2(r,s)$ functions.
\begin{theorem}\label{twistingpre}
    Let $h=h(r,s)=r-s +3/2$. If $0<h(r,s)<3/2$, then for any positive integer $n$, the $n$th Fourier coefficients of $\mathbb{K}_2(r,s)(\tau)$ and $\mathbb{K}_2(r,h)(\tau)$ are the same up to a sign. Moreover, if $(r,s)$ belongs to a Galois family, then the completed Hecke eigenforms $f_{r,s}=\chi_{r,h}\otimes f_{r,h}$, where $\chi_{r,s}$ is a finite order character.
\end{theorem}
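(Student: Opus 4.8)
The plan is to leverage the explicit construction of $\mathbb{K}_2(r,s)$ recorded in \eqref{eq:K2} together with the fact that $s\mapsto h=r-s+3/2$ is a \emph{Pfaff involution} for this family. First I would record the elementary point that $(r,s)\mapsto(r,\,r-s+3/2)$ squares to the identity, and identify it -- through the hypergeometric--modularity dictionary of \cite{aglt} and the Coxeter picture -- with the reflection in $G$ realizing the classical Pfaff transformation within the family $\{HD(r,s)\}$. The hypothesis $0<h<3/2$ I would read as the admissibility condition placing $h$ in the range for which $\mathbb{K}_2(r,h)$ is again the holomorphic weight-$3$ form attached to $HD(r,h)$ -- the same sort of interval constraint responsible for the list of $199$ holomorphic congruence cases -- so that the asserted comparison is between two genuine members of the family; pinning this down precisely is the first step.

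Next, the heart of the first assertion is to show that $\mathbb{K}_2(r,h)$ is the twist of $\mathbb{K}_2(r,s)$ by an explicit \emph{quadratic} Dirichlet character $\psi=\psi_{r,h}$, i.e.\ $c_n(\mathbb{K}_2(r,h))=\psi(n)\,c_n(\mathbb{K}_2(r,s))$ for every $n$. I would argue this on the level of the attached Galois representations: the Katz hypergeometric sheaf for $HD(r,h)$ is the Kummer twist (by the quadratic character) of the one for $HD(r,s)$ that realizes Pfaff, so on good-prime Frobenius traces one gets $a_p(\mathbb{K}_2(r,h))=\psi(p)\,a_p(\mathbb{K}_2(r,s))$ -- the finite-field avatar of Pfaff for hypergeometric character sums (cf.\ \cite{greene}, \cite{katz}) carries exactly this quadratic factor -- and the conductor of $\psi$ is read off from the tame local data. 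For primes $p\mid\operatorname{cond}(\psi)$ the corresponding Fourier coefficients of both forms vanish, since there the datum is Kummer-induced and $\mathbb{K}_2(r,s)$ is already a twist; hence $c_n(\mathbb{K}_2(r,h))=\pm\,c_n(\mathbb{K}_2(r,s))$ -- allowing $\pm 0$ -- for \emph{all} positive $n$. Once \eqref{eq:K2} is in hand I expect this can also be verified by hand on $q$-expansions, since $\mathbb{K}_2(r,s)$ is supported on residue classes on which $\psi$ acts by a transparent sign pattern.

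For the ``moreover'', assume $(r,s)$ belongs to a Galois family $\mathfrak{S}_{r,s}=\{\mathbb{K}_2(i/b,s_i)\}$. Applying the twist relation to each member, with the corresponding quadratic characters (which are Galois-conjugate, hence share a conductor), shows that $\mathfrak{S}_{r,h}$ is a single quadratic twist $\psi\otimes\mathfrak{S}_{r,s}$ of the whole family. By Theorem \ref{completepre}, $f_{r,s}=\sum_i\beta_i\,\mathbb{K}_2(i/b,s_i)$ and $f_{r,h}$ is the analogous eigenform combination for $\mathfrak{S}_{r,h}$; since a Dirichlet twist of an eigenform is an eigenform, $\psi\otimes f_{r,s}=\sum_i\beta_i\,\mathbb{K}_2(i/b,h_i)$ lies in the span of $\mathfrak{S}_{r,h}$ and is again a Hecke eigenform, hence equals $f_{r,h}$ up to the quadratic-twist normalization fixed in Table \ref{tab:my_label}. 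Consequently $f_{r,h}$ and $f_{r,s}$ are twists of one another; by the theory of newforms the twisting character is automatically a Dirichlet character of finite order, so on the completed forms $f_{r,s}=\chi_{r,h}\otimes f_{r,h}$ with $\chi_{r,h}$ of finite order.

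The main obstacle, I expect, is the second paragraph: upgrading Pfaff from an identity of normalized ${}_3F_2(1)$-values -- which is all one gets for free from \eqref{kummer}-type input via \eqref{lval} and Corollary \ref{lvalue} -- to an identity of entire modular forms, and in particular controlling $\operatorname{cond}(\psi)$ and proving the vanishing of the bad-prime coefficients. This needs the geometric content behind \eqref{eq:K2} (the middle-convolution construction and its behaviour under Kummer twists) rather than a purely analytic hypergeometric manipulation. A secondary technical point is confirming that $0<h<3/2$ is exactly -- not merely sufficiently -- the range in which $\mathbb{K}_2(r,h)$ is holomorphic, so that the twist stays inside the relevant finite list.
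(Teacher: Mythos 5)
You correctly identify Pfaff as the engine, but your proposed mechanism has a genuine gap. The paper's proof is an elementary function-level identity: applying the Pfaff transformation to the ${}_2F_1(\lambda)$ inside $\mathbb{K}_2(r,s)(\tau)\,d\tau = \lambda^r(1-\lambda)^{s-r-1}{}_2F_1(\lambda)\,\tfrac{d\lambda}{2\pi i\lambda}$ and using $\lambda(\tau+1)=\lambda/(\lambda-1)$ yields $\mathbb{K}_2(r,s)(\tau)\,d\tau=(-1)^r\,\mathbb{K}_2(r,r-s+3/2)(\tau+1)\,d\tau$; the sign pattern on Fourier coefficients then falls out of how $\tau\mapsto\tau+1$ multiplies $q_N^n$ by $e^{2\pi in/N}$ against the factor $(-1)^r=e^{-2\pi ia/N}$. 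Your route through Katz sheaves and Frobenius traces does not apply to the individual $\mathbb{K}_2(r,s)$: these are emphatically \emph{not} Hecke eigenforms (except for $(1/2,1)$), so they carry no Galois representation whose Frobenius traces are their prime-indexed coefficients, and even if prime coefficients were controlled, the individual $\mathbb{K}_2$'s are not multiplicative, so you cannot propagate to all $n$.

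More seriously, your central claim that $c_n(\mathbb{K}_2(r,h))=\psi(n)\,c_n(\mathbb{K}_2(r,s))$ for a \emph{quadratic Dirichlet character} $\psi$ is false in general. The actual sign pattern (Theorem \ref{twisting}) is: coefficients agree iff $n\equiv a\pmod{N}$ and are negated iff $n\equiv a+N/2\pmod{N}$, where $a$ is the numerator of $r$. Take $\mathbb{K}_2(3/8,7/8)$, so $a=3$, $N=16$: you would need $\psi(3)=1$ and $\psi(11)=-1$, but $3\cdot 11\equiv 1\pmod{16}$ forces $\psi(11)=\psi(3)^{-1}=1$ for \emph{any} Dirichlet character. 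No character of any order realizes this pattern on an individual $\mathbb{K}_2$; the Dirichlet-twist statement only becomes true for the \emph{completed} eigenforms, where the differing $\beta_i$ normalizations of $f_{r,s}$ and $f_{r,h}$ absorb the discrepancy --- and precisely for this reason the resulting $\chi_{r,s}$ can have order $4$ rather than $2$, as the paper notes for $\chi_{1/8,5/8}$. Your final paragraph's worry about bad-prime coefficients and conductors is a non-issue in the paper's approach, and your concern that $0<h<3/2$ must keep $(r,h)$ among the $199$ congruence cases is also unnecessary: the paper observes the sign relation holds even outside $\mathbb{S}_2'$, since it is a statement about $q$-expansions of eta quotients, not about congruence modular forms.
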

The proof relies on symmetries of hypergeometric functions, specifcally the Pfaff formula mentioned earlier. Refer to Theorem \ref{twisting} for a precise description of which Fourier coefficients switch sign. The transformation $(r,s)\mapsto (r,r-s+3/2)$ corresponds to the element $AKAKA$ in $G$, connecting the classical theory of ${}_3F_2(1)$ hypergeometric series and Coxeter groups to this twisting result for modular forms.

In the recurring example from \eqref{L2}, we have  $f_{1/8,1}=f_{1/8,5/8}\otimes \chi_{1/8,5/8}$. Note $\chi_{1/8,5/8}$ is a character of order 4, and so $\chi_{r,s}$ is not necessarily quadratic---this happens because the $\beta_i$ are different for each eigenform.
 To explain \eqref{L2}, observe that $\chi_{1/8,5/8}$ is an even character, and so a theorem of Shimura \cite{shimurazeta} implies $L(f_{1/8,5/8}\otimes\chi_{r,s},1)/L(f_{1/8,5/8},1)\in \bar{\Q}$. We state Shimura's result as Theorem \ref{shimura} below.  Our \eqref{L2} is rewritten  as \begin{equation}\label{L}
    L(f_{1/8,5/8},1)=-\frac{1}{2}\zeta_{48}\cdot L(f_{1/8,5/8}\otimes\chi_{1/8,5/8},1). 
\end{equation}  Therefore, using symmetries of hypergeometric series, we are able to obtain the exact value of the quotient from Shimura's theorem. Here is the list of formulas we obtain, excluding the cases when $\chi_{r,s}$ is a quadratic twist.

\begin{theorem}\label{relations}
    For the embeddings of the Fourier coefficients specified in Table \ref{tab:my_label}, we have the relations
    \begin{align*}
       & L(f_{1/8,5/8},1)=-\frac{1}{2}\zeta_{48}\cdot L(f_{1/8,1},1)\quad ,\quad L(f_{1/12,2/3},1)=\sqrt{3}\zeta_8^7\cdot L(f_{1/12,11/12},1)\\&
       L(f_{1/8,3/4},1)=\sqrt{2 - \sqrt{2} - 4 i \sqrt{3 - 2 \sqrt{2}}}\cdot L(f_{1/8,7/8},1).
    \end{align*}

\end{theorem}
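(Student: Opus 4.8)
\textit{Proof plan.} The plan is to identify, in each of the three cases, a twist partner to which Shimura's theorem applies, and then to pin down the resulting algebraic ratio by hypergeometric means. Write $h=h(r,s)=r-s+3/2$. One checks $h(1/8,5/8)=1$, $h(1/12,11/12)=2/3$, and $h(1/8,7/8)=3/4$, so the two forms in each relation of the theorem are a pair $f_{r,s},f_{r,h}$ with $0<h<3/2$, and Theorem~\ref{twistingpre} gives $f_{r,s}=\chi_{r,h}\otimes f_{r,h}$ for a finite-order character $\chi_{r,h}$. The first thing I would do is verify, case by case from the embeddings fixed in Table~\ref{tab:my_label}, that $\chi_{r,h}$ is an \emph{even} character---exactly as is recorded for $(1/8,5/8)$ in the discussion around \eqref{L}. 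Shimura's theorem (Theorem~\ref{shimura}) then immediately yields $L(f_{r,s},1)/L(f_{r,h},1)\in\bar\Q$, so everything reduces to \emph{computing} this algebraic number. It is no accident that the three data $(1/8,5/8),(1/8,7/8),(1/12,11/12)$ playing the role of ``list'' members here are precisely those for which the three-term identity of Theorem~\ref{mainpar} is available; that identity is the engine of the computation.

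To carry out the computation I would pass entirely to hypergeometric series. By Corollary~\ref{lvalue}, $L(f_{r,s},1)=\sum_i\beta_i\,F(i/b,s_i)$ and $L(f_{r,h},1)=\sum_j\beta_j'\,F(j/b,s_j')$, the sums taken over the Galois families $\mathfrak S_{r,s}$ and $\mathfrak S_{r,h}$ with the explicit constants of Theorem~\ref{completepre} (Table~\ref{tab:my_label}). I would then apply, to each summand $F(i/b,s_i)$, the three-term identity $F=\alpha_1 F(M_1(\cdot))+\alpha_2 F(M_2(\cdot))$ of Theorem~\ref{mainpar} with $M_1=AKA$, $M_2=KAK$, followed where needed by Kummer's transformation \eqref{kummer} in its $L$-value form \eqref{relation} and by Pfaff's formula. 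Since the twisting map is the element $AKAKA$ and $M_1$, $M_2$, $AKAKA$ all lie in $G\cong D_6$, a short computation in $G$ shows that these moves carry the orbit $\mathfrak S_{r,s}$ onto $\mathfrak S_{r,h}$; collecting terms then rewrites $\sum_i\beta_i F(i/b,s_i)$ as a single scalar times $\sum_j\beta_j' F(j/b,s_j')$. That scalar is a product of the coefficients $\alpha_1,\alpha_2$ of Theorem~\ref{mainpar} (already algebraic) and of the $\Gamma$-quotients $\alpha_\bullet$ appearing in \eqref{relation}---which are individually transcendental and, importantly, take different values on conjugate data (for instance $\alpha_{1/8,5/8}=8\sin(\pi/8)$ while $\alpha_{3/8,7/8}=2\cos(\pi/8)$)---yet by the reduction above the product must be algebraic. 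Applying the reflection and duplication formulas for $\Gamma$ together with the known values of $\sin$ and $\cos$ at the relevant rational multiples of $\pi$, I expect this scalar to collapse to $-\tfrac12\zeta_{48}$, to $\sqrt3\,\zeta_8^7$, and to $\sqrt{2-\sqrt2-4i\sqrt{3-2\sqrt2}}$ respectively; in the last case it helps to note that $\sqrt{3-2\sqrt2}=\sqrt2-1$, so the constant has modulus $2\sqrt3\sin(\pi/8)$, which is why it is not merely a root of unity.

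\textit{Where the difficulty lies.} The real work is the bookkeeping in the middle step: one must track, for every member of $\mathfrak S_{r,s}$, which member of $\mathfrak S_{r,h}$ the composite of $M_1$ or $M_2$ with the auxiliary Kummer and Pfaff moves lands on, and then check that the various contributions reassemble into the \emph{same} multiple of $\beta_j'$ for every $j$. Shimura's theorem predicts that such a consistency must hold but does not prove it, so it has to be established directly---and it is here that the transcendental $\Gamma$-factors coming from the several copies of \eqref{relation} must be seen to cancel. A secondary but genuinely fiddly obstacle is fixing the correct branch of the square roots and the correct root of unity at the very end; this is least transparent in the third relation, where the answer is a genuine nested radical rather than a root of unity times a simple surd.
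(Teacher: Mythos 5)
Your plan is essentially the paper's proof: it reduces each relation to the three-term identities of Theorem~\ref{mainpar} (Lemmas~\ref{l1}--\ref{l6}) together with the Kummer transformation, exactly as in the paper's worked computation for Theorem~\ref{f5}, with Shimura's theorem playing the same purely predictive role. One small correction: in these particular classes the Kummer constants $\alpha_{r,s}$ are \emph{algebraic} (e.g.\ $8\sin(\pi/8)$, $2\cos(\pi/8)$), not transcendental as you say, so no $\Gamma$-cancellation is needed --- the only obstacle, which the three-term identities resolve, is that these algebraic constants differ between conjugates.
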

 
The paper is organized in two main parts. First, we discuss the necessary background in Section \ref{prelim}. Part I then focuses on properties of the $\mathbb{K}_2(r,s)(\tau)$. The first half of this Part is dedicated to proving Theorem \ref{completepre}. Then, we prove Theorem \ref{twistingpre} and and use that to classify all possible Galois families.  
In Part II, we apply the results of the first half to prove the three-term identities in Theorem \ref{mainpar} using modular forms. We then prove the extension of the Kummer transformation to the $L$-values of Hecke eigenforms Theorem \ref{relations}. After discussing some further directions, we provide a table of all Hecke eigenforms obtained using our method in Appendix I. In Appendix II, we provide some geometric explanation of conjugates.

\bigskip

\subsection*{Acknowledgments}  The author would like to express her gratitude to Ling Long and Fang-Ting Tu for suggesting this problem based on their recent work, and for their helpful guidance and suggestions along the way. She would also like to thank John Voight, Yifan Yang, and Wadim Zudilin for helpful discussions. Finally, she thanks Michael Allen, Brian Grove, and Hasan Saad for their helpful comments on an earlier version of this paper.

The author was supported by a summer research assistantship from the Louisiana State University Department of Mathematics in 2024.

\bigskip

\section{Preliminaries}\label{prelim}

In this section, we introduce the $\mathbb{K}_2(r,s)$ functions and recall some of their important properties from \cite{aglt}. We also briefly explain how we can frame our discussion throughout the paper using a finite Coxeter group.
\subsection{The $\mathbb{K}_2(r,s)$ Functions}
For rational parameters $a_i$ and $b_i$ and $\lambda\in \C\setminus\{0,1\}$, let $(a)_k=a(a+1)\dots(a+k-1)$. Then define the generalized hypergeometric series by $$\pfq{n}{n-1}{a_1,a_2,\dots,a_n}{b_1,b_2,\dots,b_{n-1}}{\lambda}=\sum_{k=0}^\infty\frac{(a_1)_k(a_2)_k\dots(a_n)_k}{(b_1)_k(b_2)_k\dots(b_{n-1})_k}\frac{\lambda^k}{k!}.$$ There is also an integral representation:  $$ \int_0^1x^{r-1}(1-x)^{s-r-1}\pfq{2}{1}{a,b}{,c}{x}dx=\frac{1}{B(r,s-r)}\pfq{3}{2}{a,b,r}{,c,s}{1}.$$ The basic idea of Allen et al.'s \cite{aglt} construction  is to use Ramanujan's theory of elliptic functions to alternate bases to identify the integrand with a modular form of weight 3. One of the richest examples of this process arises from $$\pfq{2}{1}{1/2,1/2}{,1}{\lambda}$$ where $\lambda$ is the modular lambda function, which is studied intensively by Allen et al.  It is well known---see e.g. \cite{yang}---that this hypergeometric function is an elliptic function and a weight 1 modular form on the congruence group $\Gamma(2)$. Some other choices of ${}_2F_1(t)$ are noted in \cite{aglt}, but the choice of datum are restrictive, since we need the hypergeometric function to be a modular form. See also \cite{grove} for the choice of datum $\{\{1/2,2/3\},\{1,1\}\}$. The eta product form of $\lambda$ and of elliptic functions allows us to write the integrand as an eta quotient, 
\begin{equation}\label{eq:K2}
    \mathbb{K}_2(r,s)(\tau)= \frac{\eta(\tau/2)^{16s-8r-12}\eta(2\tau)^{8r+8s-12}}{\eta(\tau)^{24s-30}},
\end{equation} where $\eta(\tau)$ is the Dedekind eta function. Also, let $q=e^{2\pi i\tau}$ as before and define \begin{equation}\label{N}
    N(r)=\frac{48}{\gcd(24r,24)}.
\end{equation} 

\begin{Lemma}[\cite{aglt}]\label{aglt}
      The function $\mathbb{K}_2(r,s)$ is a weight 3 cusp form for some finite index subgroup $\Gamma$ of $SL_2(\Z)$ if and only if $0<r<s<3/2$, and $$\mathbb{K}_2(r,s)(\tau)=2^{1-4r}\lambda^r(1-\lambda)^{s-r-1}\pfq{2}{1}{1/2,1/2}{,1}{\lambda}q\frac{d}{dq}\log \lambda.$$ Moreover, $\Gamma$ is congruence if and only if the exponents in the eta product  \eqref{eq:K2} are integral, and the level is $\mathcal{N}:=N(r)N(s-r).$
\end{Lemma}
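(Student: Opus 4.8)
The plan is to pin everything down through the interplay of the $\eta$-quotient \eqref{eq:K2}, the modular function $\lambda$, and the Jacobi theta constants. \emph{Step 1: the hypergeometric closed form.} I would start from the classical $\eta$-product identities $\theta_2(\tau)^4=16\,\eta(2\tau)^8/\eta(\tau)^4$, $\theta_3(\tau)^4=\eta(\tau)^{20}/(\eta(\tau/2)^8\eta(2\tau)^8)$, $\theta_4(\tau)^4=\eta(\tau/2)^8/\eta(\tau)^4$, which give
\[
\lambda=\frac{\theta_2^4}{\theta_3^4}=\frac{16\,\eta(\tau/2)^8\eta(2\tau)^{16}}{\eta(\tau)^{24}},\qquad 1-\lambda=\frac{\theta_4^4}{\theta_3^4}=\frac{\eta(\tau/2)^{16}\eta(2\tau)^{8}}{\eta(\tau)^{24}} .
\]
Combining these with the two facts that make Ramanujan's change of base work here, namely ${}_2F_1(1/2,1/2;1;\lambda(\tau))=\theta_3(\tau)^2$ and $q\frac{d}{dq}\lambda=\tfrac12\lambda(1-\lambda)\theta_3(\tau)^4$ (the constant fixed by $\lambda=16q^{1/2}+\cdots$, so $q\frac{d}{dq}\log\lambda=\tfrac12(1-\lambda)\theta_3^4$), the product in the lemma collapses to
\[
2^{1-4r}\lambda^r(1-\lambda)^{s-r-1}\,{}_2F_1(1/2,1/2;1;\lambda)\,q\tfrac{d}{dq}\log\lambda=2^{-4r}\lambda^r(1-\lambda)^{s-r}\theta_3^6 ,
\]
and substituting the $\eta$-products for $\lambda$, $1-\lambda$ and $\theta_3^6=\eta(\tau)^{30}/(\eta(\tau/2)^{12}\eta(2\tau)^{12})$, the powers of $2$ cancel and the exponents of $\eta(\tau/2)$, $\eta(2\tau)$, $\eta(\tau)$ add up to exactly $16s-8r-12$, $8r+8s-12$, $-(24s-30)$, recovering \eqref{eq:K2}. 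The same pull-back along $\lambda=\lambda(\tau)$ identifies $\mathbb{K}_2(r,s)(\tau)\,d\tau$ with a constant multiple of $\lambda^{r-1}(1-\lambda)^{s-r-1}{}_2F_1(1/2,1/2;1;\lambda)\,d\lambda$, which underlies the integral representation quoted before the lemma.

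\emph{Step 2: weight $3$ cusp form $\Longleftrightarrow 0<r<s<3/2$.} I would work with $\mathbb{K}_2(r,s)=2^{-4r}\lambda^r(1-\lambda)^{s-r}\theta_3^6$. Since $\lambda\colon\H\to\C\setminus\{0,1\}$ is the universal cover and $\H$ is simply connected, choose branches of $\log\lambda$ and $\log(1-\lambda)$; then $\lambda^r$ and $(1-\lambda)^{s-r}$ are holomorphic and non-vanishing on $\H$, so (as $\theta_3^6$ is a weight $3$ form on $\Gamma(2)$ with finite-order multiplier) $\mathbb{K}_2(r,s)$ is holomorphic and non-vanishing on $\H$ and transforms under $\gamma\in\Gamma(2)$ by a character $\chi$ with values $e^{2\pi i r\,n_0(\gamma)}e^{2\pi i(s-r)\,n_1(\gamma)}$ times the $\theta_3^6$-multiplier, where $n_0,n_1\colon\Gamma(2)\to\Z$ are the winding numbers about $\lambda=0$ and $\lambda=1$. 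As $r,s\in\Q$, $\chi$ has finite order, so $\mathbb{K}_2(r,s)$ is a weight $3$ form on the finite-index group $\Gamma:=\ker\chi\subseteq\Gamma(2)$. For cuspidality I compute the leading $q$-behaviour at the three cusps of $\Gamma(2)$: at $\infty$ ($\lambda\to0$), $\mathbb{K}_2\sim q^{r/2}$; at $0$ (apply $\tau\mapsto-1/\tau$, which swaps $\lambda\leftrightarrow 1-\lambda$ and $\theta_3\mapsto\sqrt{-i\tau}\,\theta_3$), $\mathbb{K}_2|_3 S\sim \mathrm{const}\cdot q^{(s-r)/2}$; at $1$ (apply $\tau\mapsto\tau/(\tau+1)$, which sends $\lambda\mapsto1/\lambda$ and $(\theta_3^6)|_3\sigma=\theta_2^6$), $\mathbb{K}_2|_3\sigma\sim\mathrm{const}\cdot(1-\lambda)^{s-r}\lambda^{-s}\theta_2^6\sim\mathrm{const}\cdot q^{(3-2s)/4}$. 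Every cusp of a subgroup of $\Gamma(2)$ lies above one of these three with order a positive multiple of $r/2$, $(s-r)/2$, or $(3-2s)/4$; hence $\mathbb{K}_2(r,s)$ is holomorphic at all cusps and vanishes there --- i.e.\ is a cusp form --- exactly when $r>0$, $s>r$, $s<3/2$, and conversely a weight $3$ cusp form forces these inequalities. This proves the equivalence and the closed form.

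\emph{Steps 3--4: the congruence criterion and the level.} If $16s-8r-12$, $8r+8s-12$, $24s-30$ are integers, then \eqref{eq:K2} is an honest $\eta$-quotient with integer exponents, so (replacing $\tau$ by $2\tau$ to clear $\eta(\tau/2)$ and using that the $\eta$-multiplier factors through $SL_2(\Z/24\Z)$) it is a modular form on a congruence subgroup; the standard $\eta$-quotient level computation à la Ligozat --- whose two congruence conditions reduce here, after the substitution, to $\mathcal N\,24r\equiv0$ and $\mathcal N\,6(s-r)\equiv0\pmod{24}$ --- yields level $N(r)N(s-r)$, using $N(t)=2\,\mathrm{denom}(t)$. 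The reverse implication is where the hard part lies. Stripping the (congruence) $\theta_3^6$-multiplier off $\chi$, one must decide when the winding character $\psi\colon\gamma\mapsto e^{2\pi i(r\,n_0(\gamma)+(s-r)\,n_1(\gamma))}$ of $\Gamma(2)$ is congruence, by testing it against the abelian congruence quotients of $\Gamma(2)$: primes $\ge5$ are excluded because $SL_2(\Z_p)$ is perfect, and the only interesting constraint is at $p=3$, where $SL_2(\Z/3)$ has abelianization $\Z/3$ hit nontrivially by both parabolic generators of $\Gamma(2)$, so $\psi$ can be congruence only if the $3$-parts of $r$ and $s-r$ align with the unique $\Z/3$ congruence quotient, while the $2$-primary constraints come from the explicit $\eta$-transformation on $\Gamma(2)$; together these force the $\eta$-exponents to be integers. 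I expect making this last step rigorous --- in particular handling the multiplier contribution of the fractional-exponent factors via the Dedekind-sum transformation law of $\eta$ --- to be the technical heart of the proof, and it is here that I would follow the explicit computations of \cite{aglt}.
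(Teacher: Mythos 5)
This lemma is imported verbatim from \cite{aglt}; the paper you are reading gives no proof of it, so there is no in-paper argument to compare yours against --- the relevant comparison is with the source. That said, your reconstruction of the standard argument checks out where it can be checked. The theta/eta bookkeeping in Step 1 is correct: with $\lambda=\theta_2^4/\theta_3^4$, ${}_2F_1(1/2,1/2;1;\lambda)=\theta_3^2$ and $q\frac{d}{dq}\lambda=\tfrac12\lambda(1-\lambda)\theta_3^4$, the product does collapse to $2^{-4r}\lambda^r(1-\lambda)^{s-r}\theta_3^6$, the factor $2^{-4r}\cdot 16^r$ cancels, and the three eta exponents come out to exactly $16s-8r-12$, $8r+8s-12$, $-(24s-30)$ (and the leading exponent at $i\infty$ is indeed $r/2$, consistent with the paper's Remark \ref{four}). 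The cusp analysis in Step 2 correctly produces the orders $r/2$, $(s-r)/2$, $(3-2s)/4$ at the three cusps of $\Gamma(2)$, which is precisely the condition $0<r<s<3/2$. For Steps 3--4, the forward direction (integral exponents $\Rightarrow$ congruence, plus the Ligozat-type level computation giving $N(r)N(s-r)$ with $N(t)=2\,\mathrm{denom}(t)$) is the standard route. The one genuine gap is the converse of the congruence criterion --- showing that congruence of $\Gamma$ forces the eta exponents to be integral --- which you flag yourself; this requires controlling the winding character of $\Gamma(2)$ against its congruence quotients and is exactly the content one must take from \cite{aglt}. Since the statement is cited rather than proved here, deferring that step to the reference is appropriate, but be aware it is the only part of your outline that is not self-contained.
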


They also showed there are finitely many $(r,s)$ which correspond to holomorphic  congruence cusp forms, and the set of all of these pairs is \begin{equation}\label{s2}
    \mathbb{S}_2':=\{(r,s)\s|\s 0<r<s<3/2, \s 24s, 8(r+s)\in \Z\}.
\end{equation} There are exactly 199 such pairs.\footnote{There are 193 pairs that are non-degenerate, which corresponds to the set $\mathbb S_2$ given in \cite{aglt}. It was claimed in \cite{aglt} that there are 167 non-degenerate pairs, but this assertion is incorrect.} \begin{remark}
    The $q$-expansion of $\mathbb{K}_2(r,s)(\tau)$ should a priori have local uniformizer  $e^{2\pi i\tau/N}$. However, we can lift all of our congruence $\mathbb{K}_2(r,s)(\tau)$ to $\Gamma_1(\mathcal{N})$ by mapping $\tau\mapsto N\tau$, and for $\mathbb{K}_2(r,s)(N\tau)$ the local uniformizer is $q=e^{2\pi i\tau}$ as above. We simply write $N$ if $r$ is clear. Note that by construction, if $r=i/b$ where $i$ and $b$ are coprime, then $N=2b$. By a slight abuse of notation, when we write $\mathbb{K}_2(r,s)$ without a variable, we mean $\mathbb{K}_2(r,s)(N\tau)$.
\end{remark}

The integral representation of hypergeometric functions combined with Lemma \ref{aglt} gives that \begin{align}
    \int_0^1\lambda^r(1-\lambda)^{s-r-1}\pfq{2}{1}{1/2,1/2}{\hspace{.8cm}1}{\lambda}\frac{d\lambda}{\lambda}=-2^{4r}\pi i\int_{0}^{i\infty} \mathbb{K}_2(r,s)(\tau)d\tau\\\nonumber=B(r,s-r)\cdot\pfq{3}{2}{1/2\hspace{.3cm}1/2\hspace{.3cm}r}{ \hspace{1.1cm}1\hspace{.5cm}s}{1}.
\end{align} Here $$B(a,b)=\int_0^1t^{a-1}(1-t)^{b-1}dt$$ is the beta function.
      We also have the integral formula for the special $L$-values attached to $\mathbb{K}_2(r,s)$, 
\begin{align}\label{lval}
    L(\mathbb{K}_2(r,s)(N\tau),1)&=-2\pi i\int_{0}^{i\infty} \mathbb{K}_2(r,s)(N\tau)d\tau\\\nonumber&=\frac{2^{1-4r}B(r,s-r)}{N}\cdot\pfq{3}{2}{1/2\hspace{.3cm}1/2\hspace{.3cm}r}{ \hspace{1.1cm}1\hspace{.5cm}s}{1}.
\end{align} For convenience, we establish the notation  \begin{equation}\label{frs}
    F(r,s):=\frac{2^{1-4r}B(r,s-r)}{N}\cdot\pfq{3}{2}{1/2,1/2,r}{,1,s}{1}.
\end{equation}

\begin{remark}
    Our normalization is non-standard and is chosen so that the computations with $L$-values are more convenient. It differs from the $P(r,s)$ function used in \cite{aglt} as follows: $$\pi F(r,s)=N2^{1-4r}P(r,s).$$ 
\end{remark}

Note $\pi F(r,s)$ is a period on a hypergeometric surface, which is important from a geometric point of view. If $F(r,s)$ and $F(r',s')$ are \textit{conjugate}, then they can be defined as periods on the same surfaces---see Lemma \ref{geom} in Appendix II. This reduces to the following definition. As above, $M$ is the lowest common denominator of $1/2,r,s$, where we assume $r$ and $s$ are simplified into lowest terms.
 \begin{definition}\label{conj}
   Two pairs $(r,s)$ and $(r',s')$ are \textit{conjugate}  if there exists an integer $c$ coprime to $M$ so that $r-cr'$ and $s-cs'$ are both integers. 
 \end{definition}

A special type of conjugate family as mentioned in the introduction are \textit{Galois}.

\begin{definition}\label{galois}
  Assume $\{(r_i,s_i)\}_i$ is a family of conjugates. If for $i\neq j$, $r_i\neq r_j$, then we call a the family \textit{Galois}.
\end{definition}

This definition is a slight refinement of the definition given in \cite{aglt} and \cite{grove}, but the idea is the same. As noted in the introduction, in the Galois case, we can write our family as $$\mathfrak S_{i/b,s_i}:=\{\mathbb{K}_2(i/b,s_i)\}_{i\in (\Z/b\Z)^\times}.$$  We use this extensively in Section \ref{building}.

\subsection{Coxeter Group Interpretation} \label{coxe}

There are many classical hypergeometric transformations in terms of symmetries of a finite Coxeter group going back to at least Bailey \cite{bailey} and Thomae \cite{thomae}. A finite Coxeter group is a special kind of finite group, which for us will typically be $S_n$, the symmetric group on $n$ variables, or $D_n$, the dihedral group of order $2n$. For ${}_3F_2(1)$ series, this has been studied in great depth by Beyer et al. \cite{coxeter}, and there are similar results for certain ${}_4F_3(1)$ \cite{formichella}, \cite{green} as well. Using the finite symmetry group is very helpful for our classification of $\mathbb{K}_2(r,s)$ families and for stating other identities. This is because the Coxeter group provides a uniform way to state identities that come from the hypergeometric side (such as analytic continuation formulas) and the modular forms side (such as the Atkin--Lehner involution). 

Based on the work of Bailey, Beyer et al. show that for ${}_3F_2(1)$ series, the invariance group for a properly normalized series denoted  $\hat{F}(r,s)$ is $S_5$, and they use this to classify all two term identities between the hypergeometric series, such as the Kummer transformation. The action of $S_5$ on a hypergeometric multiset $\{\{a,b,c\},\{d,e\}\}$ is expressed by writing the multiset as a column vector $(a,b,c,d,e)^t$. Denote such a vector by $\mathfrak{a}$. Then we write $S_5$ as an appropriate subgroup of $\text{GL}_5(\Z)$ so that our action on $\mathfrak{a}$ is the usual multiplication of a matrix and a vector.  From the point of view of the Coxeter groups, the initial multiset $HD$ is not so important, but we will assume that $a=b=1/2,d=1$ for convenience throughout. Thus, we may write $\mathfrak{a}=(r,s)$ in our setting, with $a,b$ and $d$ implicit, so $F(r,s)=F(\mathfrak{a})$. The Kummer transformation \ref{kummer} in our normalization is $$F(\mathfrak{a})=\alpha_{r,s}\cdot F(K\mathfrak{a})$$ for $K$ the $5\times5$ matrix in the proof of Lemma \ref{cox} and $\alpha_{r,s}$ a gamma quotient given explicitly in (\ref{crs}) below. 

Beyer et al. \cite{coxeter} also study \say{three-term identities}, which originate in the work of Thomae \cite{thomae} and Bailey \cite{bailey}. A typical three-term identity formulated in the language of Coxeter groups by \cite{coxeter} is a theorem of Thomae, $${}_3F_2( \mathfrak{a})(1)=\alpha(
\mathfrak{a})\cdot{}_3F_2(m_1\mathfrak{a})(1)+\alpha'(\mathfrak{a})\cdot {}_3F_2(m_2\mathfrak{a})(1)$$  where $\alpha(\mathfrak{a}),\alpha'(\mathfrak{a})$ are constants depending on the choice of datum $\mathfrak{a}$ and $m_1$ and $m_2$ are fixed $5\times5$  matrices specified in \cite{coxeter}. A more explicit realization of this result is given in Proposition \ref{thomae} below. They also show that for three-term identities, the invariance group is $S_6\times C_2$, where $C_2$ is the group of order 2, but the process of producing identities is more complicated. We discuss a new way to understand certain three-term identities via modular forms extensively in the second half of the paper.

Our three term relations in the context of Beyer et al's work are derived from the Kummer transformation \eqref{kummer} discussed above and the Atkin--Lehner involution. We will only use $W_2$, which is defined by the map $W_2:\tau\mapsto -1/2\tau$. Recall if $r=a/b$ has $(a,b)=1$, then $N=2b$. The action of $W_2$ is \begin{equation}\label{atkinlehner}
    W_2\,\mathbb{K}_2(r,s)(2b\tau)=\tau^{-3}\s \mathbb{K}_2(r,s)(-b/\tau)=2^{8s-16r}(-i)^3\cdot \mathbb{K}_2(s-r,s)(b\tau).
\end{equation} This is straightforward to prove using the well-known formula $\eta(-1/\tau)=\sqrt{-i\tau}\eta(\tau)$. We fix an embedding of $\sqrt{-1}$ into $\C$, which we denote by $i$, for the remainder of this paper. The action of $W_2$ and the Kummer transformation on the pair $(r,s)$
are given by the operators 
\begin{equation}\label{eq:A}
    A: (r,s)\mapsto (s-r,s)
\end{equation} and \begin{equation}\label{eq:K}
    K: (r,s)\mapsto (1-r,1/2-r+s).
\end{equation}   

\begin{proposition}\label{cox}
    The group $G$ generated by $A$ and $K$ is isomorphic to $D_6$, the dihedral group of order 12.
\end{proposition}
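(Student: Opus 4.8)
The plan is to make $G$ completely explicit by representing $A$ and $K$ as affine transformations of the $(r,s)$-plane — equivalently, as matrices in $GL_3(\Q)$ acting on column vectors $(r,s,1)^t$, or, in the $GL_5(\Z)$ formalism referenced above, as the $5\times 5$ matrices acting on $(1/2,1/2,r,1,s)^t$ — and then to invoke the standard structure theory for a group generated by two involutions. From \eqref{eq:A} and \eqref{eq:K} one writes
\[
A = \begin{pmatrix} -1 & 1 & 0 \\ 0 & 1 & 0 \\ 0 & 0 & 1 \end{pmatrix}, \qquad
K = \begin{pmatrix} -1 & 0 & 1 \\ -1 & 1 & 1/2 \\ 0 & 0 & 1 \end{pmatrix}.
\]

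First I would verify $A^2 = K^2 = I$, either from the matrices or directly: $A(A(r,s)) = A(s-r,s) = (r,s)$ and $K(K(r,s)) = K(1-r,\tfrac12-r+s) = (r,s)$. A group generated by two involutions is a quotient of the infinite dihedral group $C_2 \ast C_2$, and is isomorphic to the finite dihedral group $D_n$ of order $2n$ exactly when the product of the two generators has order $n$. So the whole proof reduces to computing $\operatorname{ord}(AK)$ and showing it equals $6$ (and that $G$ does not collapse to a proper quotient of $D_6$).

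Next I would iterate $AK$. One finds $AK\colon (r,s)\mapsto (s-\tfrac12,\ \tfrac12-r+s)$, then
\[
(AK)^2\colon (r,s)\mapsto (s-r,\ \tfrac32-r), \qquad (AK)^3\colon (r,s)\mapsto (1-r,\ 2-s),
\]
so that $(AK)^6 = \big((AK)^3\big)^2 = \operatorname{id}$. Hence $\operatorname{ord}(AK)\mid 6$, and since neither $(AK)^2$ nor $(AK)^3$ is the identity map, $\operatorname{ord}(AK) = 6$. This already produces a surjection $D_6 \twoheadrightarrow G$, so $|G|\le 12$.

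Finally I would rule out a proper collapse by checking $|G|\ge 12$. The subgroup $\langle AK\rangle$ has order $6$, and $A\notin\langle AK\rangle$: conjugation by $A$ inverts $AK$ (since $A(AK)A^{-1} = KA = (AK)^{-1}$, using $A^{-1}=A$), so were $A$ a power of $AK$ it would commute with $AK$ and force $(AK)^2 = \operatorname{id}$, contradicting $\operatorname{ord}(AK)=6$. Therefore $\langle AK\rangle$ has index at least $2$ in $G$, whence $|G|\ge 12$ and $G\cong D_6$. I do not expect a genuine obstacle here; the only points requiring care are the bookkeeping with the fractions $1/2,\ 3/2$ when iterating $AK$, and the last step confirming that $\operatorname{ord}(AK)$ is exactly $6$ and not a proper divisor, so that $G$ really is all of $D_6$ rather than, say, $S_3$ or a smaller quotient.
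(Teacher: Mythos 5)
Your proof is correct and follows essentially the same route as the paper: verify that $A$ and $K$ are involutions and that $AK$ has order $6$, then conclude $G\cong D_6$; the only cosmetic difference is that you use $3\times 3$ affine matrices (equivalently, direct iteration on $(r,s)$) where the paper uses the $5\times 5$ matrices acting on the full datum. Your explicit computation of $(AK)^2$, $(AK)^3$, and the final step ruling out a proper quotient of $D_6$ are all accurate and in fact spell out a detail the paper leaves to the reader.
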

\begin{proof}
    Both $A$ and $K$ are involutions, i.e., they have order two. 

      The general version of \eqref{kummer} given in Corollary 3.3.5 of \cite{aar} gives $$K((a,b,c,d,e)^t)=(a,d-b,d-c,d,d+e-b-c)^t. $$ Thus, with respect to the standard basis for $\Q^5$, the matrix for $K$ is   
   $$K=\begin{pmatrix}
    1& 0& 0& 0& 0\\ 0& -1& 0& 1& 0\\ 0& 0& -1& 1& 0\\ 0& 0&
  0& 1& 0\\ 0& -1& -1& 1& 1
\end{pmatrix}.$$ The matrix for $A$ is defined similarly. We also compute that $$AK=\begin{pmatrix}
    1&0&0&0&0\\
    0&-1&0&1&0\\
    0&-1&0&0&1\\
    0&0&0&1&0\\
    0&-1&-1&1&1
\end{pmatrix}.$$ Using linear algebra, it is then easy to check that $A$ and $K$ have order 2 and $AK$ has order 6, and so the group is isomorphic to the dihedral group of order 12.
\end{proof} 

\begin{remark}
    Note $D_6$ is a subgroup of the group $S_6\times C_2$ given in \cite{coxeter} for three term identities. We also point out that $D_6$ as a group is a subgroup of $S_5$ as well; however, using the normalization of \cite{coxeter}, denoted $\hat{F}(r,s)$, we should not view our group $G$ as a subgroup of the invariance group $S_5$. Since $\hat{F}(K(r,s))=\hat{F}(r,s)$ from \cite{coxeter}, $K$ is in the invariance group of $\hat{F}(r,s)$. However, $\hat{F}(A(r,s))\neq \hat{F}(r,s)$ in general. For example, we can see $A(1/8,5/8)=(1/2,5/8)$, and it is easy to check numerically that $\hat{F}(1/8,5/8)\neq\hat{F}(1/2,5/8)$. This implies $A$ is not an element of the invariance group. As a result, besides the matrix $K$ arising from the Kummer transformation, we will use the group $G$ exclusively for three-term identities.
\end{remark} 

According to Beyer et al. Section 2.2 \cite{coxeter}, three-term identities arise from the cosets of a certain normal subgroup of $S_6\times C_2$.  One important subgroup is the commutator $D_6'=[D_6,D_6]$ in $D_6$. We have $$D_6/D_6'\cong \Z/2\Z\times \Z/2\Z=\langle M_1,M_2 \rangle,$$ where \begin{equation}\label{eq:M&N}
     M_1:=AKA, \quad M_2:=KAK.
 \end{equation} Using the matrix form of $A$ and $K$, it is straightforward to verify that these are coset representatives and $M_2,$ $M_1$, and $M_1M_2$ are indeed involutions. Note that the Kummer twist fixes (within the family) the right-hand side of Table \ref{tab}, and the Atkin--Lehner involution fixes the left-hand column of Table \ref{tab}.  This implies $M_1$ and $M_2$ fix the conjugates of the left-hand column of Table \ref{tab}. This construction provides the group theoretical background for Theorem \ref{mainpar}, stated above.

\section{Part I: Modular Forms and Coxeter Groups}\label{modcox}
In this section, we first show Theorem \ref{completepre}. This allows us to construct the Hecke eigenforms and their $L$-values, which are provided in Table \ref{tab:my_label} in the Appendix. Next, we prove Theorem \ref{twistingpre} and provide a classification of all families.

\subsection{Building Eigenforms from the $\mathbb{K}_2$ Functions}\label{building}
\subsubsection{Hecke Operators}
First, we prove that the Hecke operators can be used to construct Hecke eigenforms out of Galois families.  

In Table \ref{tab}, we organize families using an element of the form $(1/b,s'/m)$, which makes sense due to the following Lemma.
\begin{Lemma}\label{1/b}
   For $(i,b)=1$ and $(c_i,m)=1$, any $(i/b,c_i/m)\in \mathbb S_2'$ is conjugate via Definition \ref{conj} to a pair of the form $(1/b,c_1/24)\in \mathbb S_2'$.
\end{Lemma}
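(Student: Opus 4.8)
The plan is to make the conjugating constant $c$ of Definition~\ref{conj} and the numerator $c_1$ completely explicit, reducing the statement to one congruence together with one size constraint. First I would record the divisibilities forced by membership in $\mathbb{S}_2'$: from $24s=24c_i/m\in\Z$ and $\gcd(c_i,m)=1$ one gets $m\mid 24$, and then, since $8c_i/m$ has denominator dividing $3$, so does $8i/b=8(i/b+c_i/m)-8c_i/m$, whence $b\mid 24$ as well. In particular $M:=\operatorname{lcm}(2,b,m)$ divides $24$, which is what makes the relevant reduction maps behave.

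Next I would choose $c$. Since $b\mid 24$, the reduction $(\Z/24\Z)^\times\to(\Z/b\Z)^\times$ is surjective, so, using $\gcd(i,b)=1$, there is an integer $c$ with $\gcd(c,24)=1$ and $c\equiv i\pmod b$; as $M\mid 24$ this $c$ is automatically coprime to $M$, and $i/b-c\cdot(1/b)\in\Z$ holds by construction. The other integrality condition $c_i/m-c\cdot(c_1/24)\in\Z$, after clearing denominators (legitimate as $m\mid 24$), becomes the single congruence $c\,c_1\equiv 24c_i/m\pmod{24}$; since $c$ is a unit mod $24$ this pins down $c_1$ modulo $24$, namely $c_1\equiv c^{-1}(24c_i/m)\pmod{24}$. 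Reducing this mod $3$ and combining it with the relation $24c_i/m\equiv-24i/b\pmod 3$ (a restatement of $8(i/b+c_i/m)\in\Z$) and with $c\equiv i\pmod b$ yields $c_1\equiv-24/b\pmod 3$, which forces $8(1/b+c_1/24)\in\Z$ for every representative of this residue class; so only the range of $c_1$ remains to be arranged.

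Finally I would fix the representative $c_1$ so that $0<1/b<c_1/24<3/2$, that is, $24/b<c_1<36$. If $b=1$ then $r=1$ and the given pair is already of the asserted form (take $c=1$), so assume $b\ge 2$. The half-open interval $(24/b,\,24/b+24]$ has length $24$, so it contains a unique representative $c_1$ of our residue class; since $24/b+24\le 36$ we get $c_1\le 36$, with equality possible only when $b=2$ and $c_1\equiv 12\pmod{24}$. But $c_1\equiv 12\pmod{24}$ would force $m=2$ by a direct check, whereas no pair $(1/2,c_i/2)$ lies in $\mathbb{S}_2'$; hence $c_1<36$, and $(1/b,c_1/24)\in\mathbb{S}_2'$ is, by the previous paragraph, conjugate to $(i/b,c_i/m)$ via $c$.

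I expect the congruence bookkeeping in the first two paragraphs to be routine Chinese Remainder Theorem work. The one step that genuinely uses the structure of the problem is the last one, where the determined residue class of $c_1$ must be turned into a second parameter honestly lying strictly between $1/b$ and $3/2$; the borderline case $b=2$ is where one has to invoke the nonexistence of an admissible pair rather than just manipulate congruences, so that is where I would be most careful.
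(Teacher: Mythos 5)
Your proof is correct and follows essentially the same route as the paper's: choose a conjugating unit $c\equiv i\pmod{b}$, solve a linear congruence to pin down the second numerator, and shift the representative into the admissible range. The only differences are cosmetic — you normalize the congruence to modulus $24$ rather than $m$, and you are more explicit than the paper about verifying the condition $8(1/b+c_1/24)\in\Z$ for the new pair and about the borderline cases $b\le 2$.
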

\begin{proof}
    The case $b=2$ is trivial, since the only possible $i/2$ lying between $0$ and $3/2$ is 1/2. So assume $b>2$. Note $(1/b)(b-i)+i/b$ equals 1, an integer. Also observe that $b-i$ is coprime to $M$, since $(i,b)=1$. So we only need to prove there is some $c'$ so that $(b-i)s'/m+c_i/m$ is also an integer, that is, $(b-i)c'+c_i\equiv 0\mod m$. For all $(i/b,c_i/m)$, $m\mid 24$, and thus $m$ can be written as products consisting of powers of 2 and 3. Therefore, $(b-i, m)=1$, and it follows that the linear congruence \begin{equation}\label{cong}
        (b-i)c'+c_i\equiv 0\mod m
    \end{equation} has a unique solution modulo $m$. We choose an integer solution $x$ of (\ref{cong}) so that $x\leq m$. If $x/m\geq 1/b$ we only need $x/m<3/2$ so that the conjugate $(1/b,x/m)$ lies in $\mathbb S_2'$. Because we assumed $x\leq m$, we have $x/m\leq 1<3/2$. Therefore, we can take $c'=x$. When $x/m<1/b$, since $b>2$, we have $x/m<1/2$, and it follows that $1/n<x/m+1<3/2$. Moreover, clearly $x+m$ satisfies the congruence (\ref{cong}) as well, so we take $c'=x+m$.
\end{proof}
\begin{remark}\label{four}
    Each $\mathbb{K}_2(a_i/b,s_i)$ has nonzero $n$th Fourier coefficients only if $n\equiv a_i\mod b$. This can be checked by directly expanding the eta product definition of $\mathbb{K}_2(a_i/b,s_i)(N\tau)$.
\end{remark} 
We are now ready to prove the first part of Theorem \ref{completepre}.

\begin{Lemma}\label{hecke2}
    Let ${\mathfrak {S}_{r,s}:=}\{\mathbb{K}_2(i/b,s_i)\}_{i\in (\Z/b\Z)^\times}$ be a Galois family. Let $p$ be a prime so that $p\equiv i\mod b$. Then there exists a nonzero integer $C(p,j)$ so that for any $j\in (\Z/b\Z)^\times$, \begin{equation*}
    T_p\,\mathbb{K}_2(j/b,s_j)=C(p,j) \cdot \mathbb{K}_2(k/b,s_k),
\end{equation*} where $ij=k$ as elements of $(\Z/b\Z)^\times$. In other words,  up to scalar multiplication, the Hecke operators permute the elements in set and the action of the Hecke operators is governed by the group structure of $(\Z/b\Z)^\times$. 
\end{Lemma}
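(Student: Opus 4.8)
\emph{Proof proposal.}

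The plan is to put the whole family inside one finite-dimensional space of cusp forms and then read the $T_p$-action off the $q$-expansions. First I would fix a common ambient space. In a Galois family the first coordinates are $i/b$ with $\gcd(i,b)=1$ by \eqref{srs}, and conjugacy in the sense of Definition~\ref{conj} (the multiplier there may be taken coprime to $24$) forces the denominators of the $s_i-i/b$ to be independent of $i$; since $N(i/b)=2b$, Lemma~\ref{aglt} then puts every $\mathbb{K}_2(i/b,s_i)$ in the same space $S_3(\Gamma_1(\mathcal{N}))$, with $\mathcal{N}=N(r)N(s-r)$, a finite-dimensional space stable under all Hecke operators. I would also record the arithmetic fact used throughout: since $b\mid 24$, the group $(\Z/b\Z)^\times$ is an elementary abelian $2$-group, so $i^{-1}\equiv i\pmod b$ for every unit $i$.

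Next I would compute the Fourier support of $T_p\,\mathbb{K}_2(j/b,s_j)$. By Remark~\ref{four}, $\mathbb{K}_2(j/b,s_j)=\sum a_nq^n$ with $a_n=0$ unless $n\equiv j\pmod b$, and the weight-$3$ Hecke operator acts by $T_p\big(\sum a_nq^n\big)=\sum\big(a_{np}+\chi(p)p^{2}a_{n/p}\big)q^n$, with $a_{n/p}=0$ for $p\nmid n$ and $\chi$ the nebentypus (if $p\mid\mathcal{N}$ only the first term survives). Since $p\equiv i\pmod b$, the term $a_{np}$ is nonzero only when $np\equiv j$, i.e. $n\equiv j\,i^{-1}\equiv ij\pmod b$, while $a_{n/p}$ is nonzero only when $n/p\equiv j$, i.e. $n\equiv jp\equiv ij\pmod b$; the two contributions land in the \emph{same} progression precisely because $i^{-1}\equiv i$. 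Hence $T_p\,\mathbb{K}_2(j/b,s_j)$ is supported on $n\equiv k\pmod b$, the progression on which $\mathbb{K}_2(k/b,s_k)$ lives.

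The substantive step is to pin down this target line. Set $V:=\bigoplus_{i\in(\Z/b\Z)^\times}\C\,\mathbb{K}_2(i/b,s_i)\subseteq S_3(\Gamma_1(\mathcal{N}))$; this is a direct sum because the summands have pairwise disjoint Fourier support (Remark~\ref{four}), and it is graded by $(\Z/b\Z)^\times$ along the residues of exponents, so that any element of $V$ supported on $n\equiv k\pmod b$ already lies in $\C\,\mathbb{K}_2(k/b,s_k)$. Thus it suffices to prove $T_pV\subseteq V$. The structural fact I would use for this, extracted from \cite{aglt} (and ultimately from the hypergeometric geometry recalled in Appendix~II), is that $V$ equals the span of $\{g_0\otimes\psi\}$, as $\psi$ runs over the Dirichlet characters modulo $b$, for a single newform $g_0$ --- a character twist of the eigenform $f_{r,s}$; indeed the $n\equiv i\pmod b$ component of $g_0$ is a scalar multiple of $\mathbb{K}_2(i/b,s_i)$, and passing between the two bases is the finite Fourier transform over $(\Z/b\Z)^\times$. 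Granting this, $T_p$ scales each $g_0\otimes\psi$ by $\psi(i)\,a_p(g_0)$ (because $p\equiv i$ is coprime to $b$; the finitely many $p\mid\mathcal{N}$ are covered by the support argument directly), so $T_pV\subseteq V$, and in the basis $\{\mathbb{K}_2(j/b,s_j)\}$ this reads $T_p\,\mathbb{K}_2(j/b,s_j)=C(p,j)\,\mathbb{K}_2(k/b,s_k)$ for a scalar $C(p,j)$, the operator permuting the family by multiplication by $i$. I expect this structural fact --- that the span of a Galois family is exactly the character-twist orbit of one newform --- to be the main obstacle; since only finitely many ($17$) families occur, in the worst case $T_pV\subseteq V$ can be verified directly by matching $q$-expansions up to the Sturm bound of $S_3(\Gamma_1(\mathcal{N}))$.

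It remains to check that $C(p,j)$ is a nonzero integer. Integrality is formal: $T_p$ preserves the lattice of integral $q$-expansions, each $\mathbb{K}_2(i/b,s_i)$ lies in it, and $\mathbb{K}_2(k/b,s_k)$ is primitive there (its first nonzero coefficient is normalized to $1$), so $C(p,j)\in\Z$. For non-vanishing, $T_p|_V$ is a monomial matrix --- the permutation of multiplication by $i$, scaled column-wise by the $C(p,j)$ --- and, using $T_p^2=T_{p^2}+p^{2}\langle p\rangle$ together with $i^2\equiv1\pmod b$, every $\mathbb{K}_2(j/b,s_j)$ is a $T_{p^2}$-eigenform, so the $j$-th diagonal entry of $(T_p|_V)^2$ equals both $C(p,j)\,C(p,k)$ and $\lambda_{p^2}(\mathbb{K}_2(j/b,s_j))+p^{2}\chi_j(p)$, with $\lambda_{p^2}$ the $T_{p^2}$-eigenvalue and $\chi_j$ the nebentypus of $\mathbb{K}_2(j/b,s_j)$. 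The explicit Frobenius-trace formula of \cite{aglt} shows this quantity is nonzero; hence $\det(T_p|_V)\ne0$ and every $C(p,j)\ne0$.
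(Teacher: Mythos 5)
Your proposal is correct in outline and its second half coincides with the paper's proof: the decisive step in both is the Fourier-support computation --- $\mathbb{K}_2(j/b,s_j)$ is supported on $n\equiv j\pmod b$ (Remark \ref{four}), the Hecke recursion $B_n=A_{np}+p^2\chi(p)A_{n/p}$ together with the fact that $(\Z/b\Z)^\times$ is elementary abelian $2$-torsion forces $T_p\,\mathbb{K}_2(j/b,s_j)$ to be supported on $n\equiv ij\pmod b$, and then stability of the span $V$ pins the image to the line $\C\,\mathbb{K}_2(k/b,s_k)$. Where you diverge is the justification of $T_pV\subseteq V$. The paper does not invoke the newform-twist-orbit structure; instead it identifies $V$ with the full space $S_3(N(r),\psi_{r,s})$ of weight-$3$ forms carrying the eta-product multiplier system $\psi_{r,s}$ on the smaller group $\Gamma_1(N(r))$: that space has dimension $\varphi(M)$, the $\varphi(M)$ members of the family are linearly independent by disjoint support, hence span it, and the space is Hecke-stable. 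Your primary route --- that $V$ is the span of $\{g_0\otimes\psi\}$ for a single newform $g_0$ --- is essentially the conclusion of Theorem \ref{complete}, which is proved \emph{after} and \emph{from} this lemma, so taking it as input would be circular; you correctly flag it as the main obstacle. Your fallback (verify $T_pV\subseteq V$ case by case up to the Sturm bound) is legitimate for the finitely many families and is in fact exactly what the paper offers as its first justification ("this can be checked on a case by case basis") before giving the multiplier-system argument. One further remark: your closing discussion of why $C(p,j)\neq 0$ is more careful than the paper, which simply asserts non-vanishing; your identity $C(p,j)C(p,k)=a_{p^2}+p^2\chi(p)=a_p(g_0)^2$ makes clear that non-vanishing for \emph{every} $p\equiv i$ is equivalent to $a_p(g_0)\neq 0$, which is not automatic (and genuinely fails for inert primes in the CM families), so this point deserves the scrutiny you give it rather than the paper's bare assertion.
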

\begin{remark}
    Lemma \ref{hecke2} is a more general version of an example given in Corollary 3.1 of \cite{aglt} for the family $(1/8,1)$. They can use the Dwork dash operator  from the $p$-adic aspect since $s=1$, which has the same action. In this sense, Lemma \ref{hecke2} tells us that we can use the Hecke operators to generalize the Dwork dash operator, which does not apply immediately when $s\neq 1$, to Galois families.
\end{remark} 

\begin{proof}
 The Galois families $\mathfrak{S}_{r,s}$  always satisfy $$T_p\,\mathfrak{S}_{r,s}=\mathfrak{S}_{r,s}\quad\quad p\nmid N.$$ This can be checked on a case by case basis. A more systematic explanation is involves the multiplier system $\psi_{r,s}$ of $\mathbb{K}_2(r,s)$ with respect to $\Gamma_1(N(r))$. Note $N(r)$ is smaller than $\mathcal{N}$ given in Lemma \ref{aglt} and so $\psi_{r,s}$ is not a character, but rather a function on $\Gamma_1(N(r))$ derived from the well-known multiplier system for $\eta(\tau)$. On $\Gamma_1(\mathcal{N})$, $\psi_{r,s}$ becomes the character $\chi$ typically associated to a weight 3 modular forms. The vector space $S_3(N(r),\psi_{r,s})$ of modular forms with multiplier system $\psi_{r,s}$ is a subspace of $S_3(\mathcal{N},\chi)$ which has apparently has dimension equal to $\varphi(M)$ for each of our examples. By Remark \ref{four}, all of the $\mathbb{K}_2(r_i,s_i)$ in the family are linearly independent, and so this implies that $\mathfrak{S}_{r,s}$ generates $S_3(N,\psi_{r,s})$. Because  $S_3(N,\psi_{r,s})$ is fixed by the Hecke operators, so is $\mathfrak{S}_{r,s}$. At any rate, we are assured that $T_p\, \mathbb{K}_2(i/b,s_i)$ can be written as a linear combination of elements in $\mathfrak {S}_{r,s}.$
  
  To prove the explicit action we apply Hecke recursion and use Remark \ref{four}. Denote the $n$th Fourier coefficient of $\mathbb{K}_2(i/b,s_i)$ and $T_p\, \mathbb{K}_2(i/b,s_i)$ by $A_n$ and $B_n$ respectively. Then we have the formula   for $T_p$ on the Fourier coefficients $$B_n=A_{np}+p^2 \chi(p)A_{n/p},$$
   where we assume $A_{n/p}$ is zero if $n/p$ is not an integer and $\chi$ is the character of $\mathbb{K}_2(i/b,s_i)$ on $\Gamma_1(\mathcal{N})$. Then, $A_{np}$ is nonzero only if $n\equiv k\mod b$. To see this, note that for $\mathbb{K}_2(j/b,s_j)$ to be in $\mathbb S_2'$, the only possibilities for $b$ are $b=3,4,8,12,24$ based on Lemma \ref{aglt}. In all of these cases, $(\Z/b\Z)^\times$ is isomorphic to a finite product of $\Z/2\Z$ with itself, and so each element has order 2. Recall $A_n$ is nonzero only if $n\equiv j\mod b$ from Remark \ref{four}. Then for $A_{np}$ to be nonzero, we need $np\equiv j\mod b$ where $p\equiv i$. By assumption $ij=k$ in $(\Z/b\Z)^\times$, and since each element is its own inverse, this implies $j=ik$. Since $p\equiv i\mod b$, we have $np\equiv j\mod b$ if and only if $n\equiv k\mod b$. 
   
We now consider $A_{n/p}$. Analogous to above, since $p\equiv i\mod b$ and $i$ is its own inverse, $1/p$ is equivalent to $i$ in $(\Z/b\Z)^\times$. Thus, for $n/p$ to be congruent to $j\mod b$, we must have $n\equiv ij=k\mod b$ as well. To conclude, by the first paragraph, $T_p\, \mathbb{K}_2(j/b,s_j)$ is a linear combination of $\mathbb{K}_2(h/b,s_h)$ functions in $\mathfrak{S}_{r,s}$, and since $\mathbb{K}_2(k/b,s_k)$ is the only element of its $\mathfrak{G}_{r,s}$ with nonzero coefficients if and only if $n\equiv k\mod b$, we have proved the lemma. 
\end{proof}

As a corollary of Lemma \ref{hecke2}, 
we can formulate the actions of the Hecke operators on the Galois families $\mathfrak S_{r,s}$ as follows. 
\begin{corollary}\label{hecke4}
   Assume $\mathfrak S_{i/b,s_i}=\{\mathbb{K}_2(i/b,s_i)\}_{i\in (\Z/b\Z)^\times}$ is a Galois family. For any primes $p$ and $\ell$ coprime to $N$, 
    $$
     T_\ell T_p= T_pT_\ell =D(p,\ell)T_{[p\ell]},\quad   D(p,\ell) \in \Z,
    $$
    where $[p\ell] \equiv p\ell \mod b$ is defined so that $0<[p\ell]<b$. Therefore, the Hecke algebra on this set is determined by $T_p$, $p\in (\Z/b\Z)^\times$. Moreover,  for a prime $p\in (\Z/b\Z)^\times$, after a suitable reordering of the basis $\mathfrak S_{i/b,s_i}$, the action of $T_p$ is given by
    $$
       \begin{pmatrix}
             T_{p,1} &0& \dots & 0\\
             0& T_{p,2} & \ddots & 0\\
               0&\ddots & \ddots & 0\\
             0&\dots& 0&  T_{p,\frac{\phi(b)}2}
       \end{pmatrix},  \quad  
       T_{p,i}=
       \begin{pmatrix}
             0 &a_i\\
             b_i& 0\\
       \end{pmatrix}, \quad  a_ib_i = D(p,p). 
    $$   

Also, when $p\equiv 1\mod b$ is a prime, the action of $T_p$ on $\mathfrak S_{i/b,s_i}$ is $C(p,1)\mathit{Id}$, where Id is the identity matrix. 
\end{corollary}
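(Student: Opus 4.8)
The plan is to bootstrap everything from Lemma \ref{hecke2}, which already gives that $T_p\,\mathbb{K}_2(j/b,s_j)=C(p,j)\cdot\mathbb{K}_2(k/b,s_k)$ with $k\equiv ij\bmod b$ whenever $p\equiv i\bmod b$. First I would record the structural consequence: since the Hecke operators $T_p$ and $T_\ell$ commute on the space $S_3(N,\psi_{r,s})$ (standard), and each sends a basis element $\mathbb{K}_2(j/b,s_j)$ to a scalar times another basis element, composing the two formulas from Lemma \ref{hecke2} gives $T_\ell T_p\,\mathbb{K}_2(j/b,s_j)=C(\ell,ij)C(p,j)\cdot\mathbb{K}_2(m/b,s_m)$ with $m\equiv i'ij\equiv [p\ell]j\bmod b$, where $p\equiv i$, $\ell\equiv i'$. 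On the other hand $T_{[p\ell]}\,\mathbb{K}_2(j/b,s_j)=C([p\ell],j)\cdot\mathbb{K}_2(m/b,s_m)$ for the same index $m$, because $[p\ell]j\equiv m\bmod b$. Comparing, $T_\ell T_p$ and $T_{[p\ell]}$ agree up to the scalar $D(p,\ell):=C(\ell,ij)C(p,j)/C([p\ell],j)$ on each basis vector; the content to check is that this ratio is independent of $j$ and is an integer. Independence of $j$ follows because $T_\ell T_p=T_pT_\ell$ is a \emph{single} linear operator and, as just shown, it acts as a scalar times the permutation $j\mapsto [p\ell]j$ of the basis; a permutation-times-diagonal operator that is forced (by the two-sided equality $T_\ell T_p=T_pT_\ell$ together with the involutive nature of $(\Z/b\Z)^\times$) to be a scalar multiple of the single operator $T_{[p\ell]}$ must have that scalar constant. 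Integrality of $D(p,\ell)$ follows since all Fourier coefficients involved lie in a number ring and $D(p,\ell)$ appears as a ratio of such algebraic integers that must itself be rational (it's an eigenvalue-type quantity of an integral Hecke operator acting on a lattice of integral $q$-expansions), hence an integer; alternatively one reads it off directly from any single nonzero Fourier coefficient via Hecke recursion as in the proof of Lemma \ref{hecke2}.

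Next I would extract the matrix form. Fix a prime $p$ with $p\equiv i\bmod b$, $i\not\equiv 1$. Because $(\Z/b\Z)^\times$ is an elementary abelian $2$-group for $b\in\{3,4,8,12,24\}$ (the only relevant moduli by Lemma \ref{aglt}), multiplication by $i$ is a fixed-point-free involution on $(\Z/b\Z)^\times$ unless $i=1$; it therefore partitions the index set into $\phi(b)/2$ unordered pairs $\{j,ij\}$. By Lemma \ref{hecke2}, $T_p$ maps $\mathbb{K}_2(j/b,s_j)\mapsto C(p,j)\,\mathbb{K}_2(ij/b,s_{ij})$ and $\mathbb{K}_2(ij/b,s_{ij})\mapsto C(p,ij)\,\mathbb{K}_2(j/b,s_j)$, so in the ordered basis obtained by listing each pair $\{j,ij\}$ consecutively, $T_p$ is block diagonal with $\phi(b)/2$ blocks of the shape $\left(\begin{smallmatrix}0&a_i\\ b_i&0\end{smallmatrix}\right)$, where $a_i=C(p,ij)$, $b_i=C(p,j)$. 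The product of the off-diagonal entries within a block is $C(p,j)C(p,ij)=D(p,p)$: indeed, applying the first part of the corollary with $\ell=p$ gives $T_p^2=D(p,p)\,T_{[p^2]}=D(p,p)\,T_1$ on this space (since $i^2\equiv 1$), and $T_p^2$ acts on the pair $\{j,ij\}$ by the scalar $C(p,j)C(p,ij)$ while $T_1$ acts by the identity up to its own scalar $C(p',1)$ for the relevant $p'\equiv1$; normalizing so that $T_1$ on the family is $C(p,1)\,\mathrm{Id}$ (this is the final assertion, which is immediate from Lemma \ref{hecke2} since $i=1$ forces $k=j$ for every $j$, giving the same scalar $C(p,1)$ across the family because $T_p$ is one operator) pins down $a_ib_i=D(p,p)$ after absorbing the constant into $D(p,p)$.

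The main obstacle I anticipate is the \emph{uniformity} claims — that $C(p,j)C(p,ij)$ is the same for every pair $\{j,ij\}$, equivalently that $D(p,p)$ is genuinely independent of which block one looks at, and likewise that $C(p,1)$ is constant across the family. The honest reason is that $T_p$ (and $T_p^2$, and $T_1$) is a \emph{single} well-defined linear endomorphism of $S_3(N,\psi_{r,s})$; but turning "single operator" into "equal scalars on each block" requires an argument, since a priori $T_p^2$ could be a diagonal operator with different diagonal entries on different blocks. Here I would invoke that $T_p^2$ lies in the commutative Hecke algebra and equals a scalar multiple of $T_{[p^2]}=T_1$ by the first part; and $T_1$ is central and acts on the whole isotypic piece $S_3(N,\psi_{r,s})$ — which the proof of Lemma \ref{hecke2} identified as an irreducible-enough block (dimension $\phi(M)$, spanned transitively by the Hecke action on the family) — so $T_1$ is forced to be scalar on it by a Schur-type argument, i.e. the transitivity of the $T_p$-action established in Lemma \ref{hecke2} shows any Hecke-equivariant operator that is diagonal in this basis has constant diagonal. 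Once $T_1$ is scalar, $D(p,p)$ inherits the same constancy, and the block structure follows. I would also double-check the edge case $b=2$ (trivial, one-element family) and confirm $p\nmid N$ is enough to avoid the degenerate-prime issues flagged around Lemma \ref{aglt}.
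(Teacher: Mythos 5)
Your proposal follows essentially the same route as the paper: compose the two applications of Lemma \ref{hecke2}, compare the result with $T_{[p\ell]}$ (whose action is also governed by Lemma \ref{hecke2} since every residue in $(\Z/b\Z)^\times$ is $1$ or prime for the relevant $b$), and use commutativity of the Hecke algebra to pin down the scalar; the block-matrix form and the $p\equiv 1$ case are then read off from the involutive structure of $(\Z/b\Z)^\times$. You are in fact more explicit than the paper about why the constants $D(p,\ell)$, $D(p,p)$, and $C(p,1)$ are uniform across the basis --- your transitivity-plus-commutativity (Schur-type) argument is the correct justification for the paper's one-line assertion that ``this constant does not depend on $j$.''
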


\begin{proof}
 For this proof, write $K_i=\mathbb{K}_2(i/b,s_i)$. Note $$T_pT_\ell \, K_j=C(\ell,j)C(p,[j\ell])K_{[p \ell j]}$$ by standard properties of the Hecke operators and Lemma \ref{hecke2}. On the other hand, $$T_{[p\ell]}K_j=C([p\ell],j)K_{[[p\ell]j]},$$ since every integer less than $b$ and coprime to $b$ happens to be a prime number in all of our cases. But $[[p\ell]j]=[p\ell j]$ in $(\Z/b\Z)^\times$, and so we have $$T_{\ell}T_p\, K_j=\frac{C(\ell,j)C(p,[j\ell])}{C([p\ell],j)}T_{[p\ell]}K_j.$$ This constant does not depend on $j$ because the Hecke algebra is commutative, and it is by definition $D(p,\ell)$ as stated in the Corollary. The remainder of the corollary follows immediately from Lemma \ref{hecke2}.
We now prove the remainder of Theorem \ref{completepre}
\end{proof}

\begin{theorem}\label{complete}
      Assume $\{\mathbb{K}_2(i/b,s_i)\}_{i\in (\Z/b\Z)^\times}$ is a Galois family and $r=1/b$, $s=s_1$. Then there exist algebraic numbers $\beta_i$ integral over a quadratic number field depending on $i$ so that $$f_{r,s}=\sum_{i\in (\Z/b\Z)^\times}\beta_i \mathbb{K}_2(i/b,s_i)$$ is an eigenvector of $T_p$ for all $p$ coprime to 6. Moreover, any Hecke eigenform constructed as above can be written as $f_{r,s}\otimes \phi$, where $\phi$ is a quadratic character with conductor dividing $b$. 
\end{theorem}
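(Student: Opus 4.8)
The plan is to diagonalize the action of the Hecke operators on the finite-dimensional space spanned by a Galois family, using the combinatorial structure established in Lemma~\ref{hecke2} and Corollary~\ref{hecke4}. By Corollary~\ref{hecke4}, after a suitable reordering the basis $\mathfrak{S}_{i/b,s_i}$ splits into $\phi(b)/2$ blocks, each a $2\times 2$ anti-diagonal matrix $T_{p,i}=\left(\begin{smallmatrix} 0 & a_i \\ b_i & 0 \end{smallmatrix}\right)$ with $a_ib_i=D(p,p)$, together with the scalar action $C(p,1)\,\mathrm{Id}$ when $p\equiv 1\bmod b$. Since the eigenvalues of such a $2\times 2$ block are $\pm\sqrt{a_ib_i}=\pm\sqrt{D(p,p)}$, each block contributes two eigenvectors whose entries are ratios of the $a_i,b_i$ (equivalently, of the $C(p,j)$), and these lie in the quadratic field $\Q(\sqrt{D(p,p)})$. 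The key point is that this same eigenbasis works simultaneously for \emph{all} $p$ coprime to $6$: because the Hecke algebra is commutative (Corollary~\ref{hecke4}) and every residue class in $(\Z/b\Z)^\times$ is represented by a prime, any $T_p$ preserves each $2\times 2$ block and hence is simultaneously diagonalized by the same vectors. First I would make this precise: pick one prime $p_0$ in each class $i\in(\Z/b\Z)^\times$, observe the blocks are common invariant subspaces, and within each block check that the eigenlines are independent of which $p$ one uses (this follows since the off-diagonal entries $C(p,j)$ for varying $p$ all have the shape forced by Lemma~\ref{hecke2}, so the eigenvectors $(\sqrt{a_i/b_i},\,1)$ are fixed up to scaling). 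This yields $f_{r,s}=\sum_i\beta_i\mathbb{K}_2(i/b,s_i)$ with $\beta_i$ integral over a quadratic field, establishing the first assertion. The normalization $\beta_1=1$ (or any convenient choice) fixes $f_{r,s}$ up to the sign choices in the square roots, which is exactly the quadratic-twist ambiguity.

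For the second assertion, I would argue that any Hecke eigenform in the span of $\mathfrak{S}_{r,s}$ arises from a choice of sign $\epsilon_i\in\{\pm1\}$ in each block, i.e. its coefficient vector is obtained from that of $f_{r,s}$ by flipping signs $\beta_i\mapsto\epsilon_i\beta_i$ on a subset of indices. The claim is that any such sign-flip is realized by twisting $f_{r,s}$ by a quadratic character $\phi$ of conductor dividing $b$. Here I would use Remark~\ref{four}: $\mathbb{K}_2(i/b,s_i)$ has nonzero $n$th Fourier coefficient only when $n\equiv i\bmod b$, so a Dirichlet character $\phi$ modulo $b$ acts on $f_{r,s}=\sum\beta_i\mathbb{K}_2(i/b,s_i)$ simply by $\beta_i\mapsto \phi(i)\beta_i$. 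Since $(\Z/b\Z)^\times$ is an elementary abelian $2$-group in all relevant cases ($b\in\{2,3,4,8,12,24\}$, as in the proof of Lemma~\ref{hecke2}), every homomorphism $(\Z/b\Z)^\times\to\{\pm1\}$ is a quadratic character; one then checks that the sign patterns $(\epsilon_i)$ that produce a \emph{Hecke eigenform} (as opposed to an arbitrary element of the span) are precisely those that are characters, because the eigenform condition forces $\epsilon_i\epsilon_j=\epsilon_{ij}$ via the multiplicativity relation $T_pT_\ell=D(p,\ell)T_{[p\ell]}$ from Corollary~\ref{hecke4}. Hence any such eigenform is $f_{r,s}\otimes\phi$ for a quadratic $\phi$, and its conductor divides $b$ since $\phi$ is defined modulo $b$.

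The main obstacle I anticipate is verifying that a \emph{single} eigenbasis works for all $T_p$ simultaneously — i.e. controlling how the off-diagonal entries $C(p,j)$ vary with $p$ within a fixed block, and confirming the eigenline $(\sqrt{a_i/b_i}:1)$ is genuinely $p$-independent. The structural input (commutativity, the block form, each class prime-represented) makes this essentially forced, but one should be careful that the square roots $\sqrt{D(p,p)}$ for different $p$ are compatible, i.e. that they generate the \emph{same} quadratic field for all $p$ in the relevant range — otherwise "integral over a quadratic number field depending on $i$" would be the wrong statement. I would resolve this by noting that $f_{r,s}$, being a Hecke eigenform with coefficients in $\mathbb{K}_2$-functions having algebraic Fourier coefficients, has a number field of coefficients; the block structure shows this field is generated over $\Q$ by the ratios $\beta_i/\beta_1$, and the $2\times 2$ shape forces $[\Q(\beta_i/\beta_1):\Q]\le 2$. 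A secondary, more bookkeeping-type obstacle is confirming that for \emph{every} Galois family in our list the hypotheses of Corollary~\ref{hecke4} genuinely apply (in particular that each element of $(\Z/b\Z)^\times$ is represented by a prime and that $\phi(M)$ equals the dimension of the ambient cusp-form space), but this was already handled in the proof of Lemma~\ref{hecke2} and can be invoked directly.
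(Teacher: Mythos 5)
Your proposal is correct and follows essentially the same route as the paper: both use the block structure from Corollary~\ref{hecke4} and simultaneous diagonalizability of the commuting $T_p$ to force the eigenvectors to have the form $\sum_i \pm\sqrt{D(i,i)}\,\mathbb{K}_2(i/b,s_i)$, and both identify the remaining sign choices with quadratic characters of $(\Z/b\Z)^\times$ acting via $\beta_i\mapsto\phi(i)\beta_i$ (your multiplicativity argument for the sign pattern is a slightly more explicit version of the paper's dimension count over the $\varphi(b)$ quadratic twists). Your worry about the square roots $\sqrt{D(p,p)}$ generating a consistent field is already absorbed by the statement's phrasing ``a quadratic number field depending on $i$,'' since $\beta_i$ only needs to lie in $\Q(\sqrt{D(i,i)})$ for that particular $i$.
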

\begin{proof}
By Corollary \ref{hecke4}, the eigenvalues of $T_p$ can only be $\pm \sqrt{D(p,p)}$ for each $p$. 
For all $i\in (\Z/b\Z)^\times$, the only possible combination of $\mathbb{K}_2(1/b,s_1)$ and $\mathbb{K}_2(i/b,s_i)$ that is an eigenvector of $T_i$ with eigenvalue $\pm\sqrt{D(i,i)}$ is $$\mathbb{K}_2(1/b,s_1)\pm \sqrt{D(i,i)}\mathbb{K}_2(i/b,s_i)$$ due to Lemma \ref{hecke2}. Because the matrices for $T_i$ are simultaneously diagnolizable, by the spectral theorem, there is a basis for $\mathfrak S_{1/b,s_1}$  which are eigenvectors for all the $T_i$, and so we can construct precisely $\varphi(b)$ linearly independent Hecke eigenforms from the family. We describe these eigenforms explicitly below. For now, we note by the discussion above, each such eigenvector must have the form $$\sum_{i\in (\Z/b\Z)^\times}\pm \sqrt{D(i,i)} \mathbb{K}_2(i/b,s_i).$$ The coefficients $\beta_i$ in the statement are thus $\sqrt{D(i,i)}$ up to a sign. Following Corollary \ref{hecke4}, we arbitrarily choose signs for the $\beta_i$ so that if $ij=k$ in $\Z$, then $\beta_i\beta_j=\beta_k$, and fix this as the Hecke eigenform $f_{r,s}$. Our choice of sign for $\beta_i$ is specified in Table \ref{tab:my_label}.

Since any element of $(\Z/b\Z)^\times$ has order 2 as noted previously, every character on the group is quadratic. Moreover $(\Z/b\Z)^\times$ is isomorphic to its character group, and so there are precisely $\varphi(b)$ distinct quadratic characters of conductor $b$, $\phi_1,...,\phi_{\varphi(b)}$, in bijection with the elements of $(\Z/b\Z)^\times$. The tensor product $f_{r,s}\otimes \phi_j$ for any $j$ swaps the signs of the $\beta_i$, and so is also a Hecke eigenform constructed from $\mathfrak{S}_{r,s}$. These are $\Q$-linearly independent, and since the dimension of $\mathfrak{S}_{r,s}$ is also $\varphi(b)$, these provide a basis for $\mathfrak{S}_{r,s}$ and hence describe every Hecke eigenform that can be constructed from the family.
\end{proof}

Because of (\ref{lval}), integrating both sides produces an exact $L$-value of the Hecke eigenform.

\begin{corollary}\label{lval2}
    Assume $\{\mathbb{K}_2(i/b,s_i)\}_{i\in (\Z/b\Z)^\times}$ is a Galois family and $f_{r,s}$ is the Hecke eigenform constructed in Theorem \ref{complete}. Then for any quadratic character $\phi$ with conductor dividing $b$, $$L(f_{r,s}\otimes \phi,1)=\sum_{i\in (\Z/b\Z)^\times}\phi(i)\cdot \beta_i\cdot  F(i/b,s_i)$$ where $ F(i/b,s_i)$ is defined in \eqref{frs}. 
\end{corollary}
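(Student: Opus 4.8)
\textbf{Proof proposal for Corollary \ref{lval2}.} The plan is to combine three ingredients already in place: the explicit decomposition $f_{r,s}=\sum_{i\in(\Z/b\Z)^\times}\beta_i\,\mathbb{K}_2(i/b,s_i)$ from Theorem \ref{complete}, the Fourier-support statement of Remark \ref{four}, and the period integral formula \eqref{lval} identifying $L(\mathbb{K}_2(r,s),1)$ with $F(r,s)$. The one genuine observation needed is that twisting by $\phi$ acts on each summand as a scalar.

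First I would show that $\mathbb{K}_2(i/b,s_i)\otimes\phi=\phi(i)\,\mathbb{K}_2(i/b,s_i)$ for every $i\in(\Z/b\Z)^\times$. Indeed, the twist multiplies the $n$th Fourier coefficient by $\phi(n)$; by Remark \ref{four} this coefficient vanishes unless $n\equiv i\bmod b$, and since $\phi$ has conductor dividing $b$, on that congruence class $\phi(n)=\phi(i)$. Because $\gcd(i,b)=1$ the value $\phi(i)$ is a nonzero root of unity, so the twisted form is literally $\phi(i)$ times the original. Summing against the $\beta_i$ gives $f_{r,s}\otimes\phi=\sum_{i\in(\Z/b\Z)^\times}\phi(i)\,\beta_i\,\mathbb{K}_2(i/b,s_i)$, which is again a weight $3$ cusp form (a linear combination of the cusp forms in $\mathfrak S_{r,s}$).

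Next I would invoke linearity of the special $L$-value. For these weight $3$ cusp forms $g$ one has $L(g,1)=-2\pi i\int_0^{i\infty}g(\tau)\,d\tau$ as in \eqref{lval}, and the integral is additive in $g$; applying it to $g=f_{r,s}\otimes\phi$ yields $L(f_{r,s}\otimes\phi,1)=\sum_{i\in(\Z/b\Z)^\times}\phi(i)\,\beta_i\,L(\mathbb{K}_2(i/b,s_i),1)$. Finally, \eqref{lval} together with the definition \eqref{frs} gives $L(\mathbb{K}_2(i/b,s_i),1)=F(i/b,s_i)$ for each $i$, which is exactly the asserted identity.

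The only points requiring care are bookkeeping rather than difficulty: one must check that the twist is compatible with the fixed lift $\tau\mapsto N\tau$ used to normalize the $q$-expansions (this holds because the conductor of $\phi$ divides $b$, and $b\mid N$), and that the termwise integration is justified, which it is since each $\mathbb{K}_2(i/b,s_i)$ is a cusp form vanishing rapidly at the cusps so the integrals converge absolutely. I do not expect a substantive obstacle here; the content of the corollary is the transparent interplay between the disjoint Fourier supports of the $\mathbb{K}_2(i/b,s_i)$ (Remark \ref{four}) and the multiplicativity of the twist.
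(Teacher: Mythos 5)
Your argument is correct and follows essentially the same route as the paper, which justifies the corollary in a single sentence ("Because of \eqref{lval}, integrating both sides produces an exact $L$-value"): the fact that $\phi$ acts on each $\mathbb{K}_2(i/b,s_i)$ as the scalar $\phi(i)$ via the disjoint Fourier supports of Remark \ref{four} is already invoked in the proof of Theorem \ref{complete}, and the rest is linearity of the period integral together with \eqref{lval} and \eqref{frs}. Your write-up simply makes these steps explicit.
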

Using the functional equation for a weight 3 Hecke eigenform, we can obtain the special $L$-value at 2 from this formula as well.

\smallskip

\subsubsection{An Example of Constructing Hecke Eigenforms} 

To illustrate the Theorem above, we use the  family that includes the pair $(1/8,5/8)$, in class 6 of Table \ref{tab} below. The family is Galois, as the conjugates have the form $(a/8,a/8+1/2)$ for $a=1,3,5,7$. There are always $\varphi(M)$ conjugates, and so it suffices to find these four as $\varphi(8)=4$.

 To find the linear combination of the conjugates,  it suffices to determine the action of the Hecke operators $T_p\, \mathbb{K}_2(1/8,5/8)$ for $p=3,5,7$. A computation on the Fourier coefficients reveals the following action. As before, let $K_i=\mathbb{K}_2(i/b,s_i)$.
$$\begin{array}{c||c|c|c|c}
&K_1&K_3&K_5&K_7\\\hline
T_3&12K_3&K_1&-4K_7&-3K_5\\
T_5&-48K_5&16K_7&K_1&-3K_3\\
T_7&-64K_7&16K_5&-4K_3&K_1
\end{array}
$$
From the table, we see that $T_3^2=12$, $T_5^2=-48$, and $T_3T_5=-3T_7$. We conclude $\beta_1=2\sqrt{3}$, $\beta_2=4i\sqrt{3}$ and because of the multiplicative nature of the coefficients,  $\beta_3=-\beta_1\beta_2/3=-8i$. We find that
$$f_{1/8,5/8}=\mathbb{K}_2(1/8,5/8)+2\sqrt{3}\mathbb{K}_2(3/8,7/8)+4\sqrt{3}i\mathbb{K}_2(5/8,9/8)-8i\mathbb{K}_2(7/8,11/8).$$ Since the Hecke eigenvalue field $K_f$ of $f_{1/8,5/8}$ visibly has degree 4 from above, all quadratic twists with conductor dividing $b$ of $f_{1/8,5/8}$ are \textit{inner twists}, that is, they correspond to automorphisms of $K_f$. Hecke eigenforms differing by an inner twist are linearly dependent over $K_f$, and so all inner twists of a modular form are given by a single LMFDB label. Using the level and character provided in Lemma \ref{aglt}, we determine that $f_{1/8,5/8}$ necessarily is associated to the LMFDB label $64.3.d.a$. However, when $K_f$ has degree less than $\varphi(b)$, sometimes multiple LMFDB labels will appear for a single family $\mathfrak{S}_{r,s}$. These are also listed in Table \ref{tab} when applicable.

By Corollary \ref{lval2}, the above formula also immediately transforms into the special $L$-value at 1 if we replace $\mathbb{K}_2$ with $F$. That is, $$16L(f_{1/8,5/8},1)=F(1/8,5/8)-2\sqrt{3}F(3/8,7/8)- 4\sqrt{3}iF(5/8,9/8)-8iF(7/8,11/8).$$

\begin{remark}\label{lmfd}
    One must be very careful when referring to the $L$-value of an LMFDB label, since this could potentially refer to any of four distinct $L$-values corresponding to the inner twists of the Hecke eigenform, or even at times the product of all inner twists. Although we are often able to obtain the $L$-values for inner twists using our method, it is important to note that results like Theorem \ref{relations} depend on the inner twist chosen, and are not true as stated for a different inner twist. To avoid this ambiguity, we use our notation $f_{1/8,5/8}$ which fixes a twist in Table \ref{tab:my_label}, instead of the notation $f_{64.3.d.a}$ as in \cite{aglt} and \cite{grove} to refer to the Hecke eigenform we constructed above. We provide the LMFDB label(s) for each family only in Table \ref{tab}.
\end{remark}

Using Lemma \ref{l1} below, this $L$ -value can be simplified to $$L(f_{1/8,5/8},1)=\frac{(1-\sqrt{-2}-\sqrt{3})}{16}[2F(3/8,7/8)+4iF(5/8,9/8)].$$ For convenience, the correct linear combination of conjugates for all the Galois cases is listed in the Appendix.

\subsection{Twisting and the Kummer Transformation}
Note that $K$ interchanges the families $\{(i/4,1)\}_{i=1,3}$ and $\{(i/4,i/4+1/2)\}_{i=1,3}$. As observed in the introduction that we have the following twisting property between these families of conjugates.  \begin{align*}
     \mathbb{K}_2(1/4,1)=q - 2 q^5 - 7 q^9 + 14 q^{13} + 18 q^{17} - 32 q^{21} - 21 q^{25} + 14 q^{29}+O(q^{30})\\
    \mathbb{K}_2 (1/4,3/4)=q + 2 q^5 - 7 q^9 - 14 q^{13} + 18 q^{17} + 32 q^{21} - 21 q^{25} - 14 q^{29}+O(q^{30}).
 \end{align*} 
 In other words, when $K$ interchanges two families, the modular forms in those families differ by a sign. The relationship is summarized in Figure \ref{fig:placeholder}. 
 \begin{figure}
   \centering
    \includegraphics[scale=.4]{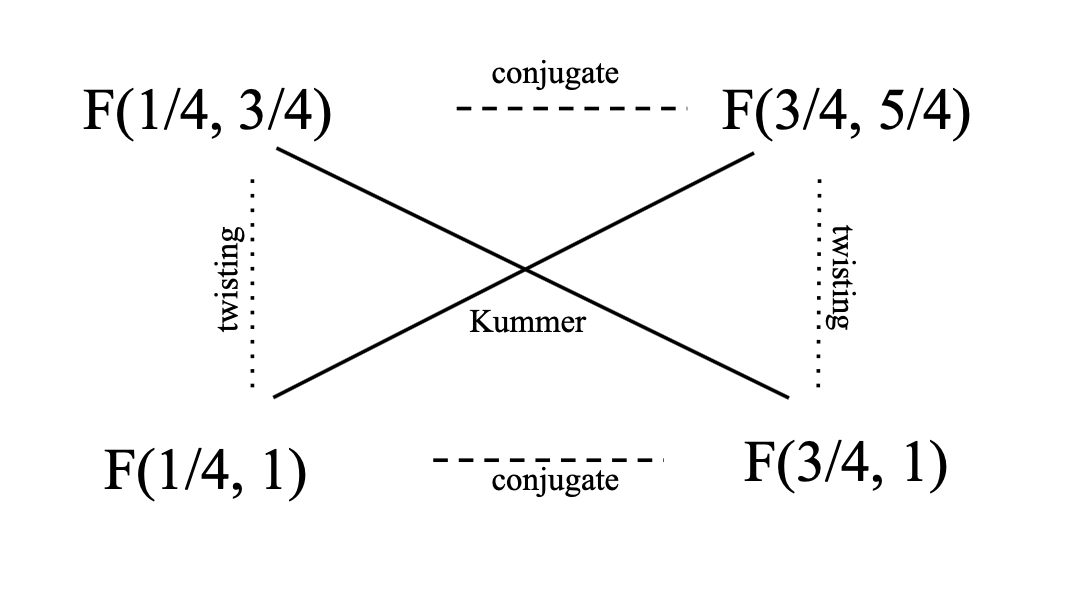}
    \caption{}
     \label{fig:placeholder}
 \end{figure}

In the diagram, the dotted lines denote relations between modular forms (Theorem \ref{twisting} below), the dashed lines are conjugates, and the solid lines are the Kummer transformation. Our application to $L$-values of newforms for some special cases arises because of a combination of all three of these aspects. In this section, we focus on the twisting. This is explained by symmetries of the hypergeometric functions underlying $\mathbb{K}_2(r,s)$.  

\begin{theorem}\label{twisting}
   Let $h=h(r,s)=r-s+3/2$. Then if $0<h(r,s)<3/2$, for any positive integer $n$, the $n$th Fourier coefficients of $\mathbb{K}_2(r,s)(\tau)$ and $\mathbb{K}_2(r,h)(\tau)$ are the same up to a sign. Moreover, if $a$ is the numerator of $r$, the $n$th coefficients are equal if and only if $n\equiv a\mod N$, where $N$ is as in (\ref{N}).
\end{theorem}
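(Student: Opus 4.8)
The plan is to express $\mathbb{K}_2(r,s)(\tau)$ and $\mathbb{K}_2(r,h)(\tau)$ in terms of the same underlying hypergeometric object via Lemma \ref{aglt}, and then compare their $q$-expansions directly through a Pfaff-type symmetry. By Lemma \ref{aglt}, for $0<r<s<3/2$ we have
\[
\mathbb{K}_2(r,s)(\tau)=2^{1-4r}\lambda^r(1-\lambda)^{s-r-1}\pfq{2}{1}{1/2,1/2}{,1}{\lambda}\,q\frac{d}{dq}\log\lambda,
\]
and the same formula applies to $\mathbb{K}_2(r,h)$ provided $0<r<h<3/2$, which is exactly the hypothesis $0<h(r,s)<3/2$ together with $r<h$ (the latter follows since $h-r=3/2-s>0$). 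The two forms therefore share the common factor $2^{1-4r}\lambda^r\,{}_2F_1(1/2,1/2;1;\lambda)\,q\frac{d}{dq}\log\lambda$, and differ only in the power of $(1-\lambda)$: one has exponent $s-r-1$, the other $h-r-1=1/2-s$. So the ratio is a power of $(1-\lambda)$, and the claim reduces to understanding how multiplication by $(1-\lambda)^{s-r-1-(h-r-1)}=(1-\lambda)^{2s-2r-1/2-\ldots}$ — more precisely by $(1-\lambda)^{(s-r-1)-(1/2-s)}$ — acts on $q$-expansions. The cleaner route is to invoke the Pfaff transformation ${}_2F_1(a,b;c;z)=(1-z)^{-a}{}_2F_1(a,c-b;c;z/(z-1))$ at the level of the integral representation behind $\mathbb{K}_2$, which is the symmetry the paper attributes to Pfaff; applied to the ${}_3F_2(1)$ datum $HD(r,s)$ it swaps $(r,s)\mapsto(r,h)$ up to an explicit gamma factor, mirroring the Atkin--Lehner/Kummer bookkeeping.

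\textbf{Key steps, in order.} First I would pin down the group-theoretic statement: verify that the map $(r,s)\mapsto(r,r-s+3/2)$ is exactly the element $AKAKA\in G$ (a finite check using the matrices for $A$ and $K$ in Proposition \ref{cox}), so that the transformation is a genuine symmetry of the normalized series and hence $F(r,s)$ and $F(r,h)$ are related by an algebraic gamma quotient. Second, I would translate this to a $q$-expansion statement: using the eta-quotient form \eqref{eq:K2}, write both $\mathbb{K}_2(r,s)$ and $\mathbb{K}_2(r,h)$ explicitly and identify the ratio as $\eta(\tau/2)^{a}\eta(2\tau)^{b}/\eta(\tau)^{c}$ for the shift $s\mapsto h$; since $h-s = 3/2-2s+ \ldots$ wait — $h-s=(r-s+3/2)-s = r-2s+3/2$, this needs care, but the point is the exponent shifts are integer multiples of a fixed vector, so the ratio is itself (a root of) an eta quotient with a clean $q$-expansion. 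Third — the sign analysis — I would use Remark \ref{four}: $\mathbb{K}_2(r,s)$ is supported on $n\equiv a\bmod b$ where $a$ is the numerator of $r$ and, after the lift $\tau\mapsto N\tau$ with $N=2b$, supported on $n\equiv a\bmod N$ in a suitable sense. The sign flip then comes from the half-integral exponents in $\eta(\tau/2)$ and $\eta(2\tau)$: passing $s\mapsto h$ changes the parity of the relevant exponents, and the resulting sign on the coefficient of $q^n$ is $(-1)^{(n-a)/b}$ or similar, which equals $+1$ precisely when $n\equiv a\bmod N$ (i.e. when $(n-a)/b$ is even, recalling $N=2b$). Finally, I would invoke the twisting characterization: a sequence of signs of the form $+1$ on $n\equiv a\bmod N$ and $-1$ on the complementary residue class $n\equiv a+b\bmod N$ within the support is exactly the action of a finite-order character $\chi_{r,h}$ of conductor dividing $N$, giving $f_{r,s}=\chi_{r,h}\otimes f_{r,h}$ on the level of completed eigenforms when $(r,s)$ is Galois (here using Theorem \ref{complete} to know the eigenforms exist and that the $\beta_i$ match up appropriately under the conjugate-preserving symmetry).

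\textbf{Main obstacle.} The delicate point is the precise sign bookkeeping in the third step: extracting $(-1)^{(n-a)/b}$ from the half-integral eta exponents requires carefully expanding $\eta(\tau/2)^{16s-8r-12}$ and $\eta(2\tau)^{8r+8s-12}$ and tracking how the $q^{1/2}$ and $q^2$ scalings interact with the lift $\tau\mapsto N\tau$, and then matching the residue class $n\equiv a\bmod N$ against the parity of $(n-a)/b$. A secondary subtlety is that the $\beta_i$ in the eigenform decomposition are \emph{not} preserved by the twist — the character $\chi_{r,h}$ can have order $4$ rather than $2$ (as the $(1/8,5/8)$ example shows) — so the final sentence of the theorem requires checking that the symmetry $AKAKA$ permutes the Galois conjugates compatibly with the sign-change pattern, rather than naively transporting the linear combination. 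Everything else (the group computation, the Pfaff identity, the eta-quotient manipulation) is routine once the normalizations are fixed.
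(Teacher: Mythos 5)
Your instinct to use the Pfaff transformation is exactly right --- that is the engine of the paper's proof --- but you do not carry it through, and the eta-quotient route you substitute for it has a genuine gap. The paper applies Pfaff to the differential $\mathbb{K}_2(r,s)(\tau)\,d\tau=\lambda^r(1-\lambda)^{s-r-1}\,{}_2F_1(1/2,1/2;1;\lambda)\,\frac{d\lambda}{2\pi i\lambda}$ and then, crucially, performs the change of variable $t=\lambda/(\lambda-1)$ together with the identity $\lambda(\tau+1)=\lambda(\tau)/(\lambda(\tau)-1)$ for the modular lambda function. This converts the Pfaff symmetry into the exact functional relation $\mathbb{K}_2(r,s)(\tau)=(-1)^r\,\mathbb{K}_2(r,h)(\tau+1)$. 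From that relation the whole theorem is immediate: with $r=a/b$, $N=2b$, and local uniformizer $q_N=e^{2\pi i\tau/N}$, the shift $\tau\mapsto\tau+1$ multiplies the coefficient of $q_N^n$ by $e^{2\pi in/N}$ while $(-1)^r=e^{-2\pi ia/N}$, so the $n$th coefficients differ by the factor $e^{2\pi i(n-a)/N}$; since the support is $n\equiv a\bmod b=N/2$ (Remark \ref{four}), writing $n=a+mN/2$ makes this factor $(-1)^m$, i.e.\ $+1$ exactly when $n\equiv a\bmod N$. Your proposal never produces this intermediate identity, and without it the sign pattern ``$(-1)^{(n-a)/b}$ or similar'' in your third step is asserted rather than derived.

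The specific step that fails is your second one: knowing that $\mathbb{K}_2(r,s)(\tau)/\mathbb{K}_2(r,h)(\tau)$ is an eta quotient (it equals $\bigl(\eta(\tau/2)^2\eta(2\tau)/\eta(\tau)^3\bigr)^{16s-8r-12}$) tells you nothing directly about a coefficient-by-coefficient comparison, because multiplication by a nonconstant $q$-series mixes coefficients. The statement you actually need is that the two forms are related by the substitution $\tau\mapsto\tau+1$, and this cannot be read off from the eta exponents alone: under $\tau\mapsto\tau+1$ the factor $\eta(\tau/2)$ does not transform by a root of unity ($\eta(\tau/2+1/2)$ is a genuinely different eta-type function), so the relation $\mathbb{K}_2(r,s)(\tau)=(-1)^r\mathbb{K}_2(r,h)(\tau+1)$ is a nontrivial identity that the paper obtains precisely from Pfaff plus the $\lambda$-function computation. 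Two smaller remarks: the $AKAKA$ verification is correct but irrelevant to this theorem --- it relates the normalized values $F(r,s)$ and $F(r,h)$ by a gamma quotient and says nothing about individual Fourier coefficients --- and your final paragraph about $f_{r,s}=\chi_{r,h}\otimes f_{r,h}$ belongs to Theorem \ref{twistingpre}, not to the statement at hand, which concerns only the coefficients of the individual $\mathbb{K}_2$ functions.
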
 We will say that $(r,s)$ and $(r,h)$ are \textit{Kummer twists} of each other in this case, which are labeled \say{twisting} in the figure. Note this is a slightly more specific version of Theorem \ref{twistingpre} from the introduction.

\begin{proof}
    Apply the Pfaff transformation (see \cite{aar}), $$\pfq{2}{1}{1/2,1/2}{,1}{x}=(1-x)^{-1/2}\pfq{2}{1}{1/2,1/2}{,1}{\frac{x}{x-1}},$$ to the right hand side of $$\mathbb{K}_2(r,s)(\tau)d\tau=\lambda^r(1-\lambda)^{s-r-1}\pfq{2}{1}{1/2,1/2}{,1}{\lambda}\frac{d\lambda}{2\pi i\lambda}.$$ This gives us $$\lambda^{r}(1-\lambda)^{s-r-3/2}\pfq{2}{1}{1/2,1/2}{,1}{\frac{\lambda}{\lambda-1}}\frac{d\lambda}{2\pi i\lambda}.$$ Making a change of variable, $t=\lambda/(\lambda-1)$, and using that $\lambda(\tau+1)=\lambda/(\lambda-1)$ (see e.g. \cite{yoshida}), we compute \begin{align*}
        \mathbb{K}_2(r,s)(\tau)d\tau=&\left(\frac{t}{t-1}\right)^r\left(\frac{1}{t-1}\right)^{s-r-3/2}\pfq{2}{1}{1/2&1/2}{&1}{t}\frac{dt}{2\pi it(t-1)}\\&=(-1)^r(1-t)^{1/2-s}t^r\pfq{2}{1}{1/2&1/2}{&1}{t}\frac{dt}{2\pi it}\\&=(-1)^r\mathbb{K}_2(r,r-s+3/2)(\tau+1)d\tau,
    \end{align*}
    invoking that the linear fractional transformation $\lambda/(\lambda-1)$ is its own inverse repeatedly.  
    Because of this, the $q$-expansions of $\mathbb{K}_2(r,s)(\tau)$ and $\mathbb{K}_2(r,h)(\tau)$ differ by a root of unity, and as the $\mathbb{K}_2(r,s)$ functions always have rational coefficients, this means the Fourier coefficients must differ by a sign.

     Let $r=a/b$ for $a$ and $b$ coprime integers. For the second part, the key observation is that $b$ is always equal to $N/2$, as noted after (\ref{N}). Also, note that $(-1)^r=e^{-a\pi i/b}=e^{-2a\pi i/N}.$ We temporarily---only for the remainder of this proof---use the local uniformizer $q_N=e^{2\pi i\tau/N}$.
    The map $\tau\mapsto\tau+1$ sends $$e^{2\pi i\tau/N}\mapsto e^{2\pi i\tau/N}e^{2\pi i/N},$$ and so $q_N^n\mapsto e^{2n\pi i/N}q_N^n$. Multiplying $e^{-2a\pi i/N}$ through, we see the $n$th Fourier coefficient for $\mathbb{K}_2(r,s)(\tau)$ is the same as the $n$th Fourier coefficient of $\mathbb{K}_2(r,h)(\tau)$ up to $e^{2(n-a)\pi i/N}$. Again writing out the $q_N$-expansion, we see the first term in the Fourier expansion of $\mathbb{K}_2(r,s)(\tau)$ is $q_N^a$, and all other non-zero coefficients are equivalent to $a$ modulo $b$. Since $b=N/2$, this implies $n=a+mN/2$ for some integer $m$. This divides the $n$ into two congruence classes mod $N$, equivalent to $a$ when $m$ is even and $a+N/2$ when $m$ is odd. Evidently, the signs do not switch if and only if $m$ is even, so this proves the claim.
\end{proof}

Note that this doesn't even require our modular form to be in $\mathbb S_2'$, and
the twist could be the same as the original modular form. 
However, the phenomenon is called twisting because in all but one of the Galois cases, the completed (as in Theorem \ref{complete}) two $q$-series differ by a twist of a character. For example, the newform \begin{equation}\label{1/4}
f_{1/4,1}=\mathbb{K}_2(1/4,1)+4i\mathbb{K}_2(3/4,1)
\end{equation} is related by Kummer to $$f_{1/4,3/4}=\mathbb{K}_2(1/4,3/4)+4i \mathbb{K}_2(3/4,5/4)$$ and $f_{1/4,1}=\phi \otimes f_{1/4,3/4}$, where $\phi$ is the quadratic character modulo 4. The one exception is class 3 in Table \ref{tab}, which occurs because one of the corresponding Hecke eigenforms is an oldform for the level specified in Lemma \ref{aglt}.

We rely on the following idea for our classification.
\begin{definition}
    We say  a family of conjugates $(r_i,s_i)$ is \textit{self-twisted} if the Kummer twist of $(r_i,s_i)$ equals $(r_j,s_j)$ for some $j$ in the same family.
\end{definition}

 All the non-CM self-twisted families arise from applying the Atkin--Lehner involution $W_2$ to a Galois family. For example, the Atkin--Lehner involution maps $\mathbb{K}_2(a/8,a/8+1/2)$ to $\mathbb{K}_2(1/2,a/8+1/2)$, up to a constant. We can then easily check that the latter family is self-twisted. Each of these families has the same number of conjugates, so there should be a relation between them. See for instance Lemma \ref{threeterm}. This allows us to compute the action of the Hecke operators indirectly.

\subsection{Classification of Conjugate Families}\label{class} In this section, we complete the classification of our families by enumerating all the possible families. Understanding our situation in terms of the group action of $D_6:=\langle A,K\rangle$ on $\mathbb{S}_2'$ \eqref{s2} allows us to classify our modular forms in a natural way. We demonstrate this pictorially by constructing a graph where the nodes are the elements of $\mathbb{S}_2'$. The edges are the pairs related by $A$ (colored red) or by $K$ (colored black), and a black dotted line if two vertices are conjugate.  An example is given in Figure \ref{fig:galois} below. 
 For example, $(1/4,5/4)$ corresponds to a level 16 holomorphic modular form. However, applying $K$ to this produces $(3/4,3/2)$, which is not in $\mathbb{S}_2'$. The figure shows first a specific connected component of the graph, and then a more general connected component.

\begin{figure}[htbp]
    \centering
    \includegraphics[width=0.8\linewidth]{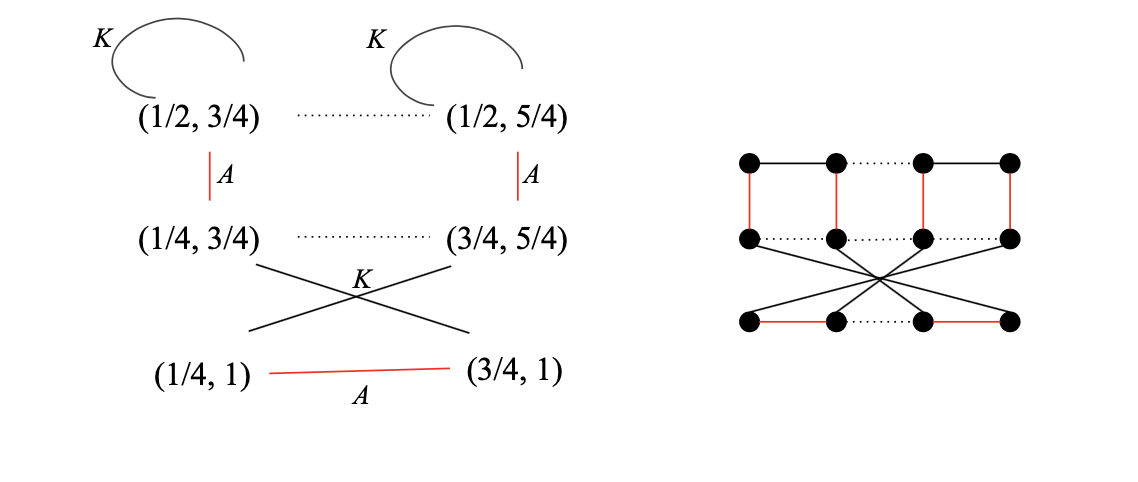}
    \caption{The graph associated to class 5 on the left, with the Atkin--Lehner involution in red, the Kummer twist in black, and elements in the same Hecke orbit dotted. The graph on the right is associated to a generic non-CM four-term class} 
    \label{fig:galois}
\end{figure}

From here on, when we refer to a \say{family} of pairs, we mean the set of conjugates for a given pair, and when we say a \say{class} of pairs, we mean all pairs in a connected component of the graph, i.e. a row in Table \ref{tab}. When we refer to a Galois class, we mean any class containing at least one Galois family. We only label the classes in Table \ref{tab}. Individual families are assigned labels in Table \ref{tab:my_label} of the Appendix.

\begin{definition}
  For a representative pair $(r,s)$ in a  given family of conjugates, we denote its stabilizer subgroup in $D_6$ by $\text{Stab}(r,s)$.
\end{definition}
Note this is stabilized within the family. For example, we would consider $A$ to be in $\text{Stab}(1/8,1)$ because $A(1/8,1)=(7/8,1)$, which is conjugate to $(1/8,1)$. We can classify the stabilizer as follows.
\begin{proposition}
    For every pair  $(r,s)$ in Table \ref{tab} outside of Class 1, we have 
    $$\text{Stab}(r,s)\cong \Z/2\Z\times \Z/2\Z. $$ If $(r,s)$ is in Class 1, then  $$\text{Stab}(r,s)\cong D_6. $$
\end{proposition}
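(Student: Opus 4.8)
The plan is to show that the orbit of a representative pair $(r,s)$ under $G = D_6$ has the expected size by tracking the conjugacy classes, then invoke the orbit--stabilizer theorem. Since $|D_6| = 12$, a stabilizer isomorphic to $\Z/2\Z \times \Z/2\Z$ (order $4$) corresponds to an orbit of size $3$, while a stabilizer equal to all of $D_6$ corresponds to a fixed point. So the first step is to reformulate the statement: I claim that for each class outside Class~1, the $12$ pairs $g\cdot(r,s)$ for $g\in D_6$, when considered up to the conjugacy relation of Definition~\ref{conj}, collapse to exactly $3$ distinct conjugacy classes; and for Class~1 they all collapse to a single conjugacy class. Because every class in Table~\ref{tab} is represented by a single connected component of the graph in Figure~\ref{fig:galois}, and the nodes of that component are exactly the $D_6$-orbit (intersected with $\mathbb{S}_2'$, but the stabilizer computation is insensitive to which representatives land in $\mathbb{S}_2'$), this is really a statement about the structure of these components.

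\textbf{Key steps.} First I would identify the relevant subgroups of $D_6$ of order $4$: the only subgroups of $D_6$ isomorphic to $\Z/2\Z\times\Z/2\Z$ are the three ``Klein four'' subgroups generated by the central involution $z = (AK)^3$ together with a reflection. Concretely, using the matrix presentation from Proposition~\ref{cox}, I would write down $z$ explicitly and check that $z$ acts on $(r,s)$ as a map of the form $(r,s)\mapsto (r', s')$ with $r - cr', s - cs'$ integral for a suitable $c$ coprime to $M$ — i.e.\ that $z$ always lands in the conjugacy class of the identity. This is the crucial computation: it says the central element of $D_6$ is ``invisible'' to the conjugacy relation for every pair, which geometrically reflects that $z = M_1 M_2$ and both $M_1 = AKA$, $M_2 = KAK$ fix conjugates of the left-hand column of Table~\ref{tab} (a fact already asserted in Section~\ref{coxe}). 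Granting this, the conjugacy relation factors through the quotient $D_6/\langle z\rangle \cong S_3$ acting on pairs. Second, I would analyze which reflections, if any, additionally stabilize $(r,s)$ up to conjugacy. For a pair outside Class~1 I need to show exactly one nontrivial coset of $\langle z\rangle$ stabilizes $(r,s)$ — equivalently, the image orbit in $S_3$ has size $3$, so the image stabilizer has order $2$. Third, for Class~1 (which contains the pair $(1/2,1)$ and its conjugates), I would check directly that both $A$ and $K$ fix $(1/2,1)$ up to conjugacy: $A(1/2,1) = (1/2,1)$ on the nose, and $K(1/2,1) = (1/2,1)$ likewise, so $\mathrm{Stab}(1/2,1) = \langle A,K\rangle = D_6$.

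\textbf{Main obstacle.} The step I expect to be genuinely delicate is the second one: verifying that for \emph{every} non-Class-1 representative, precisely one of the three reflection-cosets stabilizes the pair, and hence the orbit has size exactly $3$ rather than $6$ or $12$. A uniform argument would require understanding, for each class, which of $M_1$, $M_2$, $M_1M_2$ sends $(r,s)$ into its own conjugacy class; by the remarks in Section~\ref{coxe} the ``Atkin--Lehner'' direction $A$ and the ``Kummer'' direction $K$ play asymmetric roles, and it is the interplay of the conjugacy relation (parametrized by $c$ coprime to $M$) with these reflections that must be pinned down. In practice I would handle this by going through the finitely many classes in Table~\ref{tab} case by case — the number of distinct $(b, m)$ patterns is small, since $b\in\{3,4,8,12,24\}$ as established in Lemma~\ref{aglt} — checking in each case that the graph component in the style of Figure~\ref{fig:galois} has exactly three conjugacy-classes of nodes. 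The clean conceptual input that makes all these checks short is the centrality lemma from the first step, so the real content of the proof is isolating and proving that $z = (AK)^3$ always acts trivially on conjugacy classes.
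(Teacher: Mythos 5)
Your proposal is correct, and it actually supplies a proof that the paper omits: the proposition is stated bare, resting only on the assertion at the end of Section \ref{coxe} that $M_1=AKA$ and $M_2=KAK$ fix the conjugates of the left-hand column of Table \ref{tab} (itself left as ``straightforward to verify''). Your organizing idea --- that the central involution $z=(AK)^3$ acts trivially on conjugacy classes --- does check out: a direct computation gives $z(r,s)=(1-r,\,2-s)$, which is conjugate to $(r,s)$ in the sense of Definition \ref{conj} via $c\equiv -1 \pmod M$; and the $D_6$-action genuinely descends to conjugacy classes because any admissible $c$ is odd (as $2\mid M$), which is what makes $K(r,s)=(1-r,1/2-r+s)$ compatible with the relation. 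From there the reduction to $D_6/\langle z\rangle\cong S_3$ is clean, and your ``delicate'' second step is lighter than you fear: since $M_1M_2=z$, the reflections $M_1$ and $M_2$ lie in the \emph{same} coset of $\langle z\rangle$, so the lower bound on the stabilizer is the single check that $M_1(r,s)=(s-1/2,\,r+1/2)$ is conjugate to $(r,s)$ for each left-column representative, while the upper bound is automatic --- if two distinct reflection-cosets stabilized, the image stabilizer in $S_3$ would be all of $S_3$ and the class would collapse to one family, contradicting the three non-conjugate families recorded in each row of Table \ref{tab}. Together with the immediate Class 1 check $A(1/2,1)=K(1/2,1)=(1/2,1)$, this gives a complete argument; what your route buys over the paper's implicit one (``$\mathrm{Stab}$ contains $\langle M_1,M_2\rangle$ and the orbit visibly has three elements'') is an a priori reason that the only possible stabilizers are Klein four subgroups or all of $D_6$, since $D_6$ has no elements of order $4$ and every order-$4$ subgroup contains $z$.
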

Thus, in the Galois classes excluding Class 1, $$|D_6/\text{Stab}(r,s)|=3.$$  

Here is the main result. 
\begin{theorem}\label{classs}
There are 18 Galois families falling into 10 connected components. Among these, 12 are non-CM,  falling into 6 connected components, 3 are CM and non-degenerate, falling into 2 connected components, and 4 are CM and degenerate, falling into 2 connected components. 
\end{theorem}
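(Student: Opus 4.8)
The plan is to reduce the statement to a finite enumeration governed by the $D_6$-action on $\mathbb{S}_2'$ and then carry out that enumeration. First I would note that by Lemma \ref{1/b} every conjugate family contains a representative of the form $(1/b,c/24)$, and by Lemma \ref{aglt} the admissible denominators are $b\in\{2,3,4,8,12,24\}$; the Galois condition of Definition \ref{galois} forces the family to have exactly $\varphi(b)$ distinct $r$-values $i/b$ with $i\in(\Z/b\Z)^\times$, so a Galois family is pinned down by its single representative $(1/b,c/24)$ subject to the constraints $0<1/b<c/24<3/2$ and $24\cdot(c/24),\,8(1/b+c/24)\in\Z$ from \eqref{s2}. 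This makes the set of \emph{candidate} Galois families explicitly finite and short to list.

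Second, I would organize these candidates into connected components of the graph described in Section \ref{class}, whose edges are the moves $A\colon(r,s)\mapsto(s-r,s)$ and $K\colon(r,s)\mapsto(1-r,1/2-r+s)$ together with conjugacy. Using the fact (from the preceding Proposition) that $\mathrm{Stab}(r,s)\cong\Z/2\Z\times\Z/2\Z$ outside Class 1 and $\cong D_6$ in Class 1, the orbit of a family under $D_6$ has size $3$ in the former case and size $1$ in the latter; so each non-Class-1 connected component packages three families, and one must check which of those three are themselves Galois (as opposed to merely conjugate to a Galois family). I would walk through each candidate, apply $A$ and $K$, discard images that leave $\mathbb{S}_2'$ (e.g. $(1/4,5/4)\xrightarrow{K}(3/4,3/2)\notin\mathbb{S}_2'$, as already noted), and record the resulting component; this produces the count of connected components.

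Third, for the CM/non-CM and degenerate/non-degenerate bookkeeping, I would identify each Galois family's associated weight-$3$ level-$\mathcal{N}$ eigenform $f_{r,s}$ via Lemma \ref{aglt} and Theorem \ref{complete}, determine CM by checking whether the character $\chi$ of $f_{r,s}$ is induced from an imaginary quadratic field (equivalently, whether $f_{r,s}$ has CM in the sense defined in the introduction), and read off degeneracy from whether the pair lies in $\mathbb{S}_2$ versus $\mathbb{S}_2'\setminus\mathbb{S}_2$ (the footnote distinction). Sorting the enumerated families by these two dichotomies yields the breakdown $12$ non-CM in $6$ components, $3$ CM non-degenerate in $2$ components, $4$ CM degenerate in $2$ components, totaling $18$ families in $10$ components.

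\textbf{Main obstacle.} The genuinely delicate step is the middle one: correctly matching up the graph's connected components and making sure no Galois family is double-counted or missed when two a priori different representatives $(1/b,c/24)$ turn out to be conjugate, and conversely that families sharing a connected component but with different $b$ (which can happen after applying $A$, since $A$ can change the denominator structure through $s-r$) are handled consistently. Verifying the CM determination for the borderline cases — distinguishing the $3$ non-degenerate CM families from the $4$ degenerate CM ones, which the paper explicitly flags as behaving differently and defers to \cite{rosen2} — also requires care, since it rests on the precise eta-quotient exponents in \eqref{eq:K2} rather than on any soft argument. Everything else is a bounded, mechanical check once the candidate list is written down.
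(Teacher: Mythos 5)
Your overall strategy---reduce to a finite candidate list via Lemma \ref{1/b}, organize by the $D_6$-graph, and sort by CM/degeneracy---is sound, but it certifies exhaustiveness differently from the paper. The paper does not sweep through candidates $(1/b,c/24)$ one by one; instead it takes Tables \ref{tab} and \ref{nongalois} as given and proves completeness by a global count: each nondegenerate non-Class-1 Galois class contributes $3\varphi(M)$ pairs (106 in total with the degenerate classes), the non-Galois classes and their Atkin--Lehner images contribute $72+3$ more, and the shifted pairs $(r+1,s+1)$ contribute $18$, for a total of $106+72+3+18=199=|\mathbb{S}_2'|$. Since every pair of $\mathbb{S}_2'$ is accounted for exactly once, no Galois family can have been missed. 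Your direct enumeration is the route the paper itself gestures at (``one can simply enumerate every other family\dots''), and it would work, but the counting argument has the advantage of catching exactly the bookkeeping subtleties you flag as the main obstacle---double-counting across conjugacy, and the shifted pairs $(r+1,s+1)$ that are linearly dependent on other members of their family yet still occupy slots in $\mathbb{S}_2'$. Your proposal does not mention these shifted pairs at all, and without them (or some substitute) a raw count of candidates will not close up against $199$.

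There is also a concrete gap in your candidate list. From \eqref{s2}, the conditions $24s\in\Z$ and $8(r+s)\in\Z$ force $24r\in\Z$, so the admissible denominators of $r$ are all divisors of $24$, not just $\{2,3,4,8,12,24\}$: you have omitted $b=6$ (and $b=1$, which arises for images under $A$ such as $(1,5/4)$). This is not cosmetic---class 11 is anchored at $(1/6,4/3)$, and the family $\{(1/6,5/6),(5/6,7/6)\}$ sitting in the Atkin--Lehner column of class 3 has distinct $r$-values, so a sweep that never examines $b=6$ cannot certify how such families are to be counted. Relatedly, your claim that every non-Class-1 component ``packages three families'' of equal size fails for the degenerate classes (class 2 has only 3 pairs and class 4 only 6, because some conjugates such as $(3/4,3/4)$ violate $r<s$ and drop out of $\mathbb{S}_2'$); the stabilizer computation gives the orbit of a \emph{family}, not a uniform pair count. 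Once the candidate list is corrected to all $b\mid 24$ and the degenerate and shifted pairs are handled explicitly, your enumeration goes through and reproduces the paper's tally.
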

We defer the proof of Theorem \ref{classs} until after we have stated the tables of families obtained using our method.

 \subsubsection{List of all Galois Classes}
We count the Galois families below.

Table \ref{tab} below records the list of all Galois families using the classification above.  The three columns correspond to the size of $D_6/\text{Stab}(r,s)$. Thus, each row of the table corresponds to a connected component of the graph described above, including all three types of edges.  We also include the families generated by applying the Atkin--Lehner involution to a Galois family in this table, since these are in the same connected component of the graph as the Galois family.

\begin{table}[ht]
   \begin{center}
     \begin{tabular}{c|cc|cc|c|c}
       Label & Data & LMFDB & K-Data & K-LMFDB&AL-K-Data& CM? \\\hline
        1 & $1/2,1$ & 16.3.c.a & $1/2,1\s (*)$ & 16.3.c.a &$1/2,1\s (*)$& yes\\\hline
        2& $1/4,5/4$ & 16.3.c.a & $1/4,1/2$ & 64.3.c.a & $1,5/4$ & yes\\\hline
         3&  $1/3,7/6$ & $\begin{array}{c} \text{36.3.d.a}\\\text{36.3.d.b} \end{array}$ &$1/3,2/3$ & $\begin{array}{c} \text{36.3.d.a}\\\text{36.3.d.b} \end{array}$ & $1/6, 5/6$ & yes\\\hline
          4& $1/8,9/8$ & 32.3.d.a & $1/8,1/2$ & 256.3.c.e & $1,9/8$ & yes\\\hline
       5 & $1/4,3/4$ & 32.3.c.a & $1/4,1$ & 64.3.c.b& $1/2,3/4$ & no \\\hline
       6& $1/8,5/8$ & 64.3.d.a & $1/8,1$ & 256.3.c.g & $1/2,5/8$ & no \\\hline
       7& $1/8,7/8$ & 128.3.d.c & $1/8,3/4$ & $\begin{array}{c} \text{256.3.c.c}\\\text{256.3.c.f} \end{array}$& $1/4,5/8$ & no\\\hline

         8& $1/12,11/12$ & $\begin{array}{c} \text{288.3.g.a}\\\text{288.3.g.c} \end{array}$ & $1/12,2/3$ & $\begin{array}{c} \text{576.3.g.d}\\\text{576.3.g.h} \end{array}$& $1/6,7/12$ & no\\\hline
          9& $1/24, 17/24$ & 288.3.b.c & $1/24, 5/6$ & 2304.3.g.y& $2/3,17/24$ & no\\\hline
       10& $1/24,23/24$ & 1152.3.b.i & $1/24,7/12$ & $\begin{array}{c} \text{2304.3.g.p}\\\text{2304.3.g.w} \end{array}$ & $1/12,13/24$ &  no\\

    \end{tabular}
\end{center}
    \caption{Classes of Galois Families. The three columns for these classes denote the action of the Kummer twist and the Atkin--Lehner involution, respectively, on the initial data. By Theorem \ref{complete}, we can construct any twist by a quadratic character of our initial modular form if the denominator of $r$ divides the conductor.  When this is an inner twist, the LMFDB label is the same, but when it is not an inner twist, the label is different. For completeness, we list both labels in these cases. }
    \label{tab}
\end{table}

 To prove that Table \ref{tab} encompasses all possible Galois families, one can simply enumerate every other family and show they are not Galois or given by applying the Atkin-Lehner involution to a Galois family.
 
\subsubsection{List of non-Galois classes}
 The notion of conjugates generates the following families. We can also obtain another four families by applying the Atkin-Lehner involution to each of these families. 
\begin{table}[ht]
    \centering
    \begin{tabular}{ccccccccc}
        & Data  & LMFDB 1 & LMFDB 2 & K-Data & K-LMFDB 1 & K-LMFDB 2\\\hline
        11 & $1/6,4/3$ & 144.3.g.c & - & $1/6,1/3$ & 144.3.g.c & -\\\hline
        12 & $1/24,5/24$ & 576.3.b.b & 576.3.b.c & $1/24,1/3$ &2304.3.g.t &2304.3.g.s\\\hline
        13 & $1/12,17/12$ &36.3.d.b  & 144.3 g.c & $1/12,1/6$ & 576.3.g.c & 576.3.g.f \\\hline
        14 & $1/8,11/8$ & 128.3.d.a & 128.3.d.b & $1/8,1/4$ & 256.3.c.b & 256.3.c.d\\\hline
        15 & $1/24,35/24$ & 1152.3.b.b & 1152.3.b.f & $1/24,1/12$ &2304.3.g.e & 2304.3.g.i\\
    \end{tabular}
    \caption{Non-Galois classes. Families arising from the Atkin--Lehner involution are not shown, but can be found using \eqref{atkinlehner}. Note that both modular forms listed have CM. More details about these families are given in \cite{rosen2}.}
    \label{nongalois}
\end{table}

Referring to the Tables listed above, we can finish the classification.

\begin{proof}[Proof of Theorem \ref{classs}]
         Each nondegenerate class excluding class 1 in Table \ref{tab} has a total of $3\varphi(M)$ pairs, with $\varphi(M)$ in each family. The exceptions are class 1, which only has 1 pair, and the degenerate classes: class 2 has 3 pairs, and class 4 has 6 pairs.  So, in total 106 pairs $(r,s)$ occur in Table \ref{tab}. Counting the number of pairs arising from conjugates to the pairs from non-Galois classes besides class 11 and the Atkin-Lehner involutions of these conjugates, we obtain an additional $3\varphi(M)$ pairs automatically, giving us 72 more pairs, as well as another 3 pairs for class 11. In addition, there are pairs of the form $(r+1,s+1)$ for a pair $(r,s)$ belonging to a non-Galois family---see \cite{rosen2} for some examples. In these cases, the modular form $\mathbb{K}_2(r+1,s+1)$ is a linear combination of $\mathbb{K}_2(r,s+1)$, and $\mathbb{K}_2(r,s)$, and so these pairs do not count towards the $\varphi(M)$ linearly independent pairs guaranteed in each family, but still do count as elements of $\mathbb S_2$. For each non-Galois family excluding class 11, we count a total of $\varphi(M)/4$ such pairs, and so a total of 18 occur. Adding this together, we see that our tables account for $106+72+3+18=199$ pairs, which is the size of $\mathbb{S}_2$'. 
\end{proof}

\section{Part II: Identities of \textit{L}-values }

In this section, we give the proof of the extension of Kummer's theorem to $L$-values, stated precisely as Theorem \ref{relations} below.

We can explicitly find $L$-value relations arising from Theorem \ref{twisting} in the non-CM setting by using the Kummer Transformation \eqref{kummer}, which states  
$$F(r,s)=2^{4-8r}\alpha_{r,s}F(1-r,1/2+q-r)$$  in terms of $F(r,s)$, where \begin{equation}\label{crs}
    \alpha_{r,s}=\frac{\Gamma(r) \Gamma(s - r)^2}{\Gamma(1 - r) \Gamma(s-1/2)^2 }. 
\end{equation} In most cases, $\alpha_{r,s}$ does not appear to be algebraic. Among the congruence examples, the only times it is algebraic, excluding degenerate cases, can be summarized by the fact $$F(r,1)=2^{-4+8r}\csc(\pi r)F(1-r,3/2-r)$$ and $$F(r,1-r)=2^{-4 r} \csc(\pi r)F(1-r,3/2-2r),$$ for any rational $r$.
These correspond to classes 5, 6, 7, 8, and 10 in Table \ref{tab}.

The simplest non-CM example is class 5, which is already addressed in \cite{aglt}. Using Corollary \ref{lval2}, we have $$L(f_{1/4,3/4},1)=F(1/4,3/4)+4iF(3/4,5/4)$$ and $$L(f_{1/4,1},1)=F(1/4,1)-4iF(3/4,1).$$ In this case, \eqref{kummer} takes the form 
\begin{equation}\label{eq:16}
\begin{split}
    F(1/4,3/4)&=4\sin(\pi/4)F(3/4,1)\\ F(3/4,5/4)&=\frac{1}{4}\sin(3\pi/4)F(1/4,1).   
\end{split}
\end{equation}  The critical fact is then that $\sin(\pi/4)=\sin(3\pi/4)$ and so we can factor out $i\sin(\pi/4)$,
proving that \begin{equation}\label{eq:17}
    L(f_{1/4,3/4},1)=-\sin(\pi/4)[-4iF(3/4,1)+F(1/4,1)]=\frac{\sqrt{2}i}{2}L( f_{1/4,1},1).
\end{equation}  In other words, we \say{extend} the identities  \eqref{eq:16} to an identity of $L$-functions, \eqref{eq:17}, $$L(f_{1/4,3/4},1)=i\sin(\pi/4)\cdot L(f_{1/4,1},1).$$ These relations are predicted by Theorem \ref{twistingpre} combined with the following theorem, which we have specialized to our case. \begin{theorem}[Shimura \cite{shimurazeta}]\label{shimura}
 Suppose $f$ is a newform of weight $k>2$ and $\chi$ a finite order character so that $\chi(-1)=1$,  which is referred to as being even.  Then $$\frac{L(f,1)}{L(f\otimes \chi,1)}=\alpha\cdot  g(\chi)^{-1}$$ where $g(\chi)$ is a Gauss sum and $\alpha=\alpha(f,\chi)$ is an algebraic number depending on $f,\chi$. Moreover, if $K_f$  is the field generated by the Hecke eigenvalues of $f$, then $\alpha$ lies in $K_f$, and if $\sigma$ is a nontrivial automorphism of $K_f$, then $$\frac{L(f^\sigma,1)}{ L(f^\sigma\otimes \chi^\sigma,1)}=\alpha^\sigma\cdot  g(\chi^\sigma)^{-1}.$$
\end{theorem}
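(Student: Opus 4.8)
The natural route is via Eichler--Shimura theory and the period symbols attached to $f$, following Shimura's original argument. The plan is to (i) attach to $f$ two complex periods $u^{+}(f),u^{-}(f)$, each well defined modulo $K_f^{\times}$, that control all the critical $L$-values; (ii) show that for each admissible $j$ the quotient $L(f\otimes\chi,j)/\big((2\pi i)^{j}\,g(\chi)\,u^{\epsilon}(f)\big)$ is algebraic, with $\epsilon\in\{+,-\}$ determined by the parity of $j$ together with $\chi(-1)$; (iii) specialise to $j=1$ with $\chi$ even, where the periods governing $L(f,1)$ and $L(f\otimes\chi,1)$ coincide and hence cancel, leaving exactly the shape $\alpha\cdot g(\chi)^{-1}$; and (iv) use the $\Q$-rationality of modular cohomology to place $\alpha$ in $K_f$ and to obtain the Galois-equivariant refinement. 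For (i) one takes the local system $V=\mathrm{Sym}^{k-2}$ on $Y_1(N)$ and the parabolic cohomology $H^{1}_{\mathrm{par}}(Y_1(N),V)$, with its commuting actions of the Hecke algebra and of the real (complex-conjugation) involution $\iota$; the $f$-isotypic part is free of rank two over $K_f$, $\iota$ splits it into two $K_f$-rational lines $M_f^{\pm}$, and writing the $\iota$-components of the holomorphic class $f(z)(X-zY)^{k-2}\,dz$ as $u^{\pm}(f)\,\delta^{\pm}$ for $K_f$-generators $\delta^{\pm}$ defines $u^{\pm}(f)\in\C^{\times}$ up to $K_f^{\times}$.

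For (ii) a Mellin-transform / period-polynomial computation expresses the coefficients of $\int_{0}^{i\infty}f(z)(X-z)^{k-2}\,dz$ as algebraic multiples of $(2\pi i)^{j}u^{\epsilon(j)}(f)$, so that $L(f,j)/\big((2\pi i)^{j}u^{\epsilon(j)}(f)\big)\in K_f$ for $1\le j\le k-1$, with $\epsilon(j)$ depending only on the parity of $j$. Running the same computation for the twist through the classical identity $(f\otimes\chi)(z)=g(\bar\chi)^{-1}\sum_{a\bmod m}\bar\chi(a)\,f\!\left(z+\tfrac{a}{m}\right)$ for $\chi$ primitive mod $m$, the translation $z\mapsto z+a/m$ both produces the Gauss sum $g(\chi)$ and, because $\iota$ sends $a/m$ to $-a/m$ while $\bar\chi(-a)=\chi(-1)\bar\chi(a)$, multiplies the relevant parity by $\chi(-1)$; this yields $L(f\otimes\chi,j)/\big((2\pi i)^{j}g(\chi)\,u^{\epsilon(j)\chi(-1)}(f)\big)\in\overline{\Q}$. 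Working directly with translates of $f$ rather than passing to the newform $f\otimes\chi$ sidesteps the level and oldform issues that would otherwise intervene.

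For (iii), with $j=1$ and $\chi(-1)=1$ the two statements involve the same period $u^{\epsilon(1)}(f)$, so, assuming $L(f\otimes\chi,1)\neq0$ (otherwise the assertion is vacuous),
\[
\frac{L(f,1)}{L(f\otimes\chi,1)}=\frac{c_1}{c_2}\cdot g(\chi)^{-1},\qquad c_1\in K_f,\ c_2\in\overline{\Q}^{\times},
\]
which is the desired form with $\alpha=c_1/c_2$ algebraic.

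The remaining point (iv) is Shimura's period reciprocity law. Here $H^{1}_{\mathrm{par}}(Y_1(N),V)$ carries the $\Q$-structure coming from Betti cohomology of $Y_1(N)_{\Q}$, the Hecke decomposition and the lines $M_f^{\pm}$ are defined over $K_f$, and the periods $u^{\pm}(f^{\sigma})$ can be chosen $\sigma$-compatibly, so that any $\sigma\in\Aut(\C)$ carries $L(f\otimes\chi,j)/\big((2\pi i)^{j}g(\chi)u^{\bullet}(f)\big)$ to $L(f^{\sigma}\otimes\chi^{\sigma},j)/\big((2\pi i)^{j}g(\chi^{\sigma})u^{\bullet}(f^{\sigma})\big)$; hence $\alpha(f,\chi)^{\sigma}=\alpha(f^{\sigma},\chi^{\sigma})$. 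Taking $\sigma$ that fixes $K_f$ forces $f^{\sigma}=f$ and pins $\alpha$ down inside $K_f$ (using $\Q(\chi)\subseteq K_f$ in the situation at hand), and the general case gives the displayed formula for $L(f^{\sigma},1)/L(f^{\sigma}\otimes\chi^{\sigma},1)$. I expect step (iv) to be the main obstacle: it is precisely the passage from the analytic/Hodge-theoretic picture to the arithmetic one, requiring the $\Q$-rationality of the Eichler--Shimura isomorphism and the compatibility of Gauss sums with the Galois action, together with the delicate bookkeeping in (ii) of tracking the Gauss-sum factor and the parity flip simultaneously; the purely analytic ingredients (Mellin transforms, period polynomials, the translate identity) are routine by comparison.
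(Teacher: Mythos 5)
The paper does not prove this statement: Theorem \ref{shimura} is quoted directly from Shimura \cite{shimurazeta} as an external input, so there is no internal argument to compare yours against. Judged on its own terms, your sketch is a faithful reconstruction of the standard Eichler--Shimura/period-polynomial proof: the rank-two $f$-isotypic piece of $H^1_{\mathrm{par}}$ with $\mathrm{Sym}^{k-2}$ coefficients split by the real involution, the Manin--Shimura rationality of $L(f,j)/((2\pi i)^j u^{\pm}(f))$, the translate identity producing the Gauss sum and the parity flip by $\chi(-1)$, and the $\mathrm{Aut}(\C)$-equivariance of the normalized values. Two caveats are worth recording. First, what this argument (and Shimura's theorem as actually proved) yields is $\alpha\in K_f\cdot\Q(\chi)$, the field generated by $K_f$ and the values of $\chi$; the paper's statement that $\alpha\in K_f$ is a mild imprecision unless $\Q(\chi)\subseteq K_f$, which you correctly flag and which does hold in the paper's applications (the twists used are inner twists or characters with values in the relevant eigenvalue fields). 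Second, the translate identity requires $\chi$ primitive, and for imprimitive $\chi$ one must separately account for the finitely many algebraic Euler factors distinguishing $L(f\otimes\chi,s)$ from the $L$-function of the primitive twist; this is routine but should be said. With those provisos your outline is sound, and none of the steps you defer (the $\Q$-rationality of the Eichler--Shimura isomorphism, Galois-compatibility of Gauss sums) hides a genuine obstruction --- they are exactly the content of Shimura's original paper.
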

The constant $\alpha$ is not explicit in Shimura's theorem.

When the denominator of $r$ is e.g. 8 (classes 6 and 7), $\sin(\pi/8)$ does not equal $\sin(3\pi/8)$, and so we cannot use the Kummer transformation directly. To generalize this to classes 7, 8, and 9, we require the three term identities in Theorem \ref{mainpar}.  After doing this, the Kummer transformation extends to an identity of $L$-functions as predicted by Shimura. This occurs because of the deep motivic meaning of the modularity result in \cite{aglt} relating the two hypergeometric series to the two newforms,
translated from the \'etale setting into the complex (de Rham) setting. 

The first section below proves Theorem \ref{mainpar} and the other necessary three-term relations. Then, we prove Theorem \ref{relations}.

\subsection{The Key Three-Term relations}
We write the three-term identities in terms of four series for conciseness, but by taking the real and imaginary parts of each side produces two three-term identities for each identity listed below.

Here is the first relation for class 6: 
\begin{Lemma}\label{l1}
    $$F(1/8,5/8)-8iF(7/8,11/8)=(1-\sqrt{-2})(2F(3/8,7/8)+4iF(5/8,9/8)).$$
\end{Lemma}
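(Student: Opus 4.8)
The plan is to realize Lemma~\ref{l1} as a genuine three-term identity of the type governed by the Coxeter group $G=\langle A,K\rangle\cong D_6$, and then identify which of the classical Thomae/Bailey three-term relations it corresponds to. Concretely, I would start from the pair $(r,s)=(1/8,5/8)$ and compute the orbit of the relevant coset of the commutator $D_6'$, i.e. apply the operators $M_1=AKA$ and $M_2=KAK$ from \eqref{eq:M&N}. The point of Theorem~\ref{mainpar} is precisely that there are algebraic constants $\alpha_1,\alpha_2$ with
\begin{equation*}
    F(1/8,5/8)=\alpha_1\cdot F(M_1(1/8,5/8))+\alpha_2\cdot F(M_2(1/8,5/8)).
\end{equation*}
So the first step is to compute $M_1(1/8,5/8)$ and $M_2(1/8,5/8)$ explicitly using \eqref{eq:A} and \eqref{eq:K}, and check that (up to conjugacy, and after using the twisting relation of Theorem~\ref{twisting} where needed) these land on the pairs $(3/8,7/8)$ and $(5/8,9/8)$, while the left-hand side combination $F(1/8,5/8)-8iF(7/8,11/8)$ is recognized as (a constant times) the $L$-value $16\,L(f_{1/8,5/8},1)$ via Corollary~\ref{lval2} and the explicit eigenform computation in the example after Corollary~\ref{lval2}. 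Note the pair $(7/8,11/8)$ is exactly the Kummer twist $h(7/8,\,\cdot)$-image or an $A$-image of $(1/8,5/8)$, so the grouping into the two ``halves'' $\{K_1,K_7\}$ and $\{K_3,K_5\}$ on the two sides is forced by the structure of the Galois family.

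Once the combinatorial bookkeeping of which pairs appear is settled, the analytic content is the determination of the constants. Here I would use the classical explicit form of Thomae's three-term relation for ${}_3F_2(1)$ as recorded in \cite{coxeter} (Proposition~\ref{thomae} in this paper), specialized to the datum $\{\{1/2,1/2,1/8\},\{1,5/8\}\}$. This expresses $F(1/8,5/8)$ as a $\Gamma$-quotient times $F(M_1(\cdots))$ plus another $\Gamma$-quotient times $F(M_2(\cdots))$; evaluating those Gamma quotients at the specific rational arguments and simplifying the resulting products of $\Gamma(\text{small rational})$ using the reflection and multiplication formulae should collapse them to the stated algebraic numbers involving $\sqrt{-2}$ and the factor of $i$. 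The appearance of $\sqrt{-2}=\sqrt{2}\,i$ is consistent with $\csc(\pi/8)$-type values and with the eigenvalue field $K_f=\Q(\sqrt{3},\sqrt{-2})$ visible in the example, so I expect the $\Gamma$-quotients to reduce to expressions like $\csc(\pi/8)$, $\csc(3\pi/8)$ times powers of $2$, and then the identity $\sin(3\pi/8)=\cos(\pi/8)$ together with $2\sin(\pi/8)\cos(\pi/8)=\sin(\pi/4)=\tfrac{\sqrt2}{2}$ does the final cleanup.

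The step I expect to be the main obstacle is pinning down the exact constants and signs, rather than the existence of the relation. There are three separate sources of constants that must be tracked simultaneously: (i) the $2^{1-4r}B(r,s-r)/N$ normalization in \eqref{frs}, which differs between the four pairs since $r$ and $N$ change; (ii) the $\Gamma$-quotients coming from the classical three-term identity, which involve arguments shifted by half-integers; and (iii) the signs/roots of unity introduced by the Pfaff-transformation twisting in Theorem~\ref{twisting}, needed to move between $(7/8,11/8)$ and its Kummer-twist partner and between $(5/8,9/8)$ and its partner. Getting these three to combine into the clean coefficient $(1-\sqrt{-2})$ on the right and the coefficient $2$ (resp.\ $4i$) on $F(3/8,7/8)$ (resp.\ $F(5/8,9/8)$) requires careful computation, and a numerical check of the hypergeometric series at $1$ to several digits would be the natural way to confirm the final normalization before writing it up. A cleaner alternative, which I would also pursue in parallel, is to bypass the classical $\Gamma$-quotient bookkeeping entirely and argue via Shimura's period theory as the paper suggests: both sides are, up to algebraic factors fixed by the normalizations, the period $\pi F(\cdot)$ of the \emph{same} weight-$3$ eigenform $f_{1/8,5/8}$ (or its conjugates) integrated against cycles in a $2$-dimensional de Rham space, so any three cycles satisfy a $\bar\Q$-linear relation; the three-term identity is then the explicit form of that relation, and one only needs to compute the three algebraic periods ratios, which can be read off from the $\beta_i$ in the eigenform decomposition.
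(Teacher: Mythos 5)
Your proposal circles the right ideas but neither of its two routes closes the gap. On the primary route: applying Proposition~\ref{thomae} to $(r,s)=(1/8,5/8)$ produces a relation among $F(1/8,5/8)$, $F(3/2-s,3/2-r)=F(7/8,11/8)$, and $F(s-r,s)=F(1/2,5/8)$ --- the third term lands in the self-twisted Atkin--Lehner family, \emph{not} on the conjugates $(3/8,7/8)$ and $(5/8,9/8)$ that appear in the lemma. To bridge from $F(1/2,5/8)$ back to the conjugate pairs you need additional $q$-expansion identities of the type in Lemma~\ref{threeterm}, and even then the resulting system is what forces the paper to invoke Conjecture~\ref{con} in the analogous class~8 argument (Lemma~\ref{l5}). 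So "evaluate the Gamma quotients of Thomae's relation and simplify" is not a computation that, as described, terminates in the stated identity; the combinatorial claim that $M_1$ and $M_2$ send $(1/8,5/8)$ to $(3/8,7/8)$ and $(5/8,9/8)$ is also not what happens ($M_1$ fixes $(1/8,5/8)$ and $M_2$ sends it to $(7/8,11/8)$; the content of Theorem~\ref{mainpar} is a relation among conjugates, not a literal orbit computation).

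Your alternative route is the one the paper actually takes, but your description omits its essential mechanism. Shimura's theorem (Theorem~\ref{shimura}) only guarantees that $\alpha=L(f,1)/L(f\otimes\phi_{\mathrm{even}},1)$ times the Gauss sum $\sqrt{8}$ lies in $K_f=\Q(\zeta_{12})$; it does not let you "read off" the ratio from the $\beta_i$, which are Hecke-eigenvalue data and carry no information about the transcendental period. The paper's key step is the \emph{second} half of Shimura's theorem: since the even twist is an inner twist, $f^\sigma=f\otimes\phi_{\mathrm{even}}$ for the automorphism $\sigma$ of $\Q(\sqrt3)$, whence $(\sqrt{8}\alpha)^\sigma=8/(\sqrt{8}\alpha)$, forcing $N_{\Q(\zeta_{12})/\Q(i)}(\sqrt 8\alpha)=8$. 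Combined with the ramification of $2$ and the rank-one unit group of $\Q(\zeta_{12})$, this reduces $\sqrt 8\alpha$ to a discrete family $(2+2i)(\zeta_{12}-1)^n\epsilon$ that a rigorous finite-precision numerical computation then pins down to $-2i(-1+\sqrt3)$. Without this norm-plus-discreteness argument, a numerical check only suggests the constant; it does not prove it. That is the missing idea in your write-up.
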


\begin{proof}
The newform $f_{1/8,5/8}$ has one even inner twist $\phi_{even}$, given by the quadratic Dirichlet character 8.b in the LMFDB. For this proof, take $f=f_{1/8,5/8}$. Then, since the twist is even, we have by Shimura's result that $L(f\otimes \phi_{even},1)$ and $L(f,1)$ differ by an algebraic number. The Gauss sum of the Dirichlet character $\phi_{even}$ is $\sqrt{8}$, and so Shimura further predicts that $\sqrt{8}\alpha$ belongs to the Hecke eigenvalue field of $f$, which is $\Q(\zeta_{12})$. We may write $L(f\otimes \phi_{even},1)$ and $L(f,1)$ in terms of hypergeometric series using Corollary \ref{lval2}, as $\phi_{even}$ is a quadratic character with conductor dividing 8, which we cross-check with the conductor of 8.b in the LMFDB. Then, rearranging the relation $L(f,1)=\alpha L(f\otimes \phi_{even},1)$, we deduce $$ F(1/8,5/8)-8iF(7/8,11/8)=\frac{\sqrt{3}(1-\alpha)}{1+\alpha}(2F(3/8,7/8)+4iF(5/8,9/8)).$$ Now, let $\sigma$ denote the automorphism of $\C$ obtained by lifting the unique nontrivial automorphism of $\Q(\sqrt{3})$. By construction, $f^\sigma=f\otimes \phi_{even}$. The second part of Shimura's theorem implies $$(\sqrt{8}L(f,1)/L(f\otimes \phi_{even},1))^\sigma=(\sqrt{8}\alpha)^\sigma=\sqrt{8} L(f^\sigma,1)/L(f,1)=8/(\sqrt{8}\alpha).$$ 

Let $\alpha':=\sqrt{8}\alpha$,  which is an element of $\Q(\zeta_{12})$ by Shimura. In general, we may write elements of $\Q(\zeta_{12})$ as $\alpha'=a+b\sqrt{3}$ for $a,b\in \Q(i)$, and then $$1/\alpha'=\frac{a-b\sqrt{3}}{a^2-3b^2}.$$ However, from above $\sigma(\alpha')=a-b\sqrt{3}=8/\alpha'$, and so $a^2-3b^2=8$. This implies the norm $N_{\Q(\zeta_{12})/\Q(i)}(\alpha')=8$, and so the absolute norm is 64. Up to multiplication by a unit in the ring of integers, the only element of norm 64 in $\Q(\zeta_{12})$ is $2+2 i$. This is because 2 is a divisor of 64, so 2 is ramified in $\Q(\zeta_{12})$ and factors as $(2)=(1+i)^2$. Moreover, the unit group of $\Q(\zeta_{12})$ has rank 1 and is generated by $\zeta_{12}-1$. Combining these two facts, we get that $[\alpha'/(2+2i)]^{12}=(\zeta_{12}-1)^n$ for some integer $n$. The elements of the unit group lie in a lattice in Minkowski space, so we should be able to determine them numerically. In particular, let $B=|\zeta_{12}-1|$, where the bars denote complex modulus. Then we have $$\log_B|[\alpha'/(2+2i)]^{12}|=n.$$ Since $n$ is an integer and the integers are discrete in the reals, there is an open ball of radius, say, 1/4, around $n$ so that $n$ is the only integer in this interval. We can compute $\log_B|[\alpha'/(2+2i)]^{12}|$ numerically to a high enough level of precision that we can determine it is within the ball of radius 1/4 around 12, and so we conclude $n=12$. Therefore, we have $\alpha'/(2+2i)=(\zeta_{12}-1)\epsilon$, where $\epsilon$ is a 12th root of unity. The 12th roots of unity can be distinguished numerically, and so we check that $\epsilon=-\zeta_{12}^{10}$. In conclusion, $\alpha'=-(2+2i)(\zeta_{12}-1)\zeta_{12}^{10}=-2 i (-1 + \sqrt{3})$. From this, we can unravel the desired identity.
\end{proof}

There is a similar identity for the Kummer twist of the family above,

\begin{Lemma}\label{l2}
     $F(1/8,1)+8iF(7/8,1)=-2(1 - \sqrt{-2}) (1 + \sqrt{2})(F(3/8, 1) - 2 i F(5/8, 1)).$
\end{Lemma}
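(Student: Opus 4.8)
The plan is to prove Lemma~\ref{l2} by applying the Kummer transformation \eqref{kummer} to each of the four hypergeometric values appearing in Lemma~\ref{l1}. In the normalization \eqref{frs} the transformation \eqref{kummer} reads $F(r,s)=2^{4-8r}\alpha_{r,s}F(1-r,\,1/2-r+s)$, with $\alpha_{r,s}$ the gamma quotient of \eqref{crs}; equivalently, $F$ is transformed by the operator $K$ of \eqref{eq:K}. The first thing to observe is that $K$ carries the pairs $(1/8,5/8),(7/8,11/8),(3/8,7/8),(5/8,9/8)$ of Lemma~\ref{l1} to $(7/8,1),(1/8,1),(5/8,1),(3/8,1)$ respectively, i.e.\ to exactly the four members of the Galois family $\mathfrak S_{1/8,1}$ that occur in Lemma~\ref{l2}. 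Since Lemma~\ref{l1} is an identity, substituting $F(r,s)=2^{4-8r}\alpha_{r,s}F(Kr,Ks)$ termwise yields a genuine identity among the values $F(i/8,1)$.

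Next I would compute the four constants. Using Euler's reflection formula $\Gamma(z)\Gamma(1-z)=\pi/\sin(\pi z)$ together with $\Gamma(1/2)^2=\pi$, all four gamma quotients collapse to either $\sin(\pi/8)$ or $\sin(3\pi/8)=\cos(\pi/8)$: one finds $\alpha_{1/8,5/8}=\alpha_{7/8,11/8}=\sin(\pi/8)$ and $\alpha_{3/8,7/8}=\alpha_{5/8,9/8}=\cos(\pi/8)$, while the accompanying powers $2^{4-8r}$ are $2^{3},2^{-3},2^{1},2^{-1}$ respectively (so that, for instance, $F(1/8,5/8)=8\sin(\pi/8)F(7/8,1)$, which recovers $\alpha_{1/8,5/8}=8\sin(\pi/8)$ from the introduction). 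Inserting these into Lemma~\ref{l1}, the left-hand side becomes $\sin(\pi/8)\bigl(8F(7/8,1)-iF(1/8,1)\bigr)$ and the right-hand side becomes $(1-\sqrt{-2})\cos(\pi/8)\bigl(4F(5/8,1)+2iF(3/8,1)\bigr)$. Cancelling the common factor $\sin(\pi/8)$ introduces the ratio $\cot(\pi/8)=\cos(\pi/8)/\sin(\pi/8)=1+\sqrt2$; multiplying through by $i$ and regrouping the right-hand bracket as $-2\bigl(F(3/8,1)-2iF(5/8,1)\bigr)$ then yields precisely $F(1/8,1)+8iF(7/8,1)=-2(1-\sqrt{-2})(1+\sqrt2)\bigl(F(3/8,1)-2iF(5/8,1)\bigr)$, as claimed.

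I do not anticipate a genuine obstacle: the argument reduces to the bookkeeping of the four gamma quotients and powers of $2$, together with the elementary fact $\cot(\pi/8)=1+\sqrt2$, which is exactly what makes the transcendental factor $\sin(\pi/8)$ drop out and converts the analytic identity of Lemma~\ref{l1} into the algebraic relation asserted. As an alternative, self-contained route one could instead mimic the proof of Lemma~\ref{l1} itself: identify the even inner twist of the non-CM eigenform $f_{1/8,1}$, express $L(f_{1/8,1},1)$ and its even twist via Corollary~\ref{lval2}, and pin down the Shimura constant of Theorem~\ref{shimura} by the same norm-and-unit computation in the Hecke eigenvalue field. Since Lemma~\ref{l1} is already established, however, the Kummer substitution is the shorter path and avoids re-running that computation.
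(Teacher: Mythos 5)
Your proof is correct and is essentially the paper's own argument: apply the Kummer transformation term-by-term to Lemma \ref{l1}, observe that the four gamma quotients collapse to $\sin(\pi/8)$ or $\cos(\pi/8)$ with the powers $2^{3},2^{-3},2^{1},2^{-1}$, and finish with $\cot(\pi/8)=1+\sqrt{2}$. As a minor bonus, your intermediate identity $\sin(\pi/8)\bigl(8F(7/8,1)-iF(1/8,1)\bigr)=\cos(\pi/8)(1-\sqrt{-2})\bigl(4F(5/8,1)+2iF(3/8,1)\bigr)$ carries the correct sign on the $iF(1/8,1)$ term, whereas the displayed equation in the paper's proof has a sign typo there (its stated form of the transformation also has the exponent of $2$ inverted); your version is the one that actually yields the lemma as stated.
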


\begin{proof}
     Apply the Kummer transformation to each side of the above Lemma, which in this case has the form $F(1-r,3/2-r)=\sin(\pi r)2^{4-8r}F(r,1),$ so we get $$\sin(\pi/8)(8F(7/8,1)+iF(1/8,1))=2\cos(\pi/8)(1-\sqrt{-2})(2F(5/8,1)+iF(3/8,1)).$$ Multiply though by $i$ and note that $\cot(\pi/8)=1+\sqrt{2}$ and we arrive at the desired identity.
\end{proof}

The proof in class 7 is very similar to class 6.

\begin{Lemma}\label{l3}
 $$F(1/8, 7/8) + 8i\sqrt{2} F(7/8, 9/8) = 
 \zeta_8 (2\sqrt{2} F(3/8, 5/8) + 4i F(5/8, 11/8))$$
\end{Lemma}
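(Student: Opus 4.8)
The plan is to run the argument of Lemma \ref{l1} verbatim, with $f := f_{1/8,7/8}$ in place of $f_{1/8,5/8}$. First I would fix the family: by Theorem \ref{complete}, $f$ is the Hecke eigenform attached to the Galois family $\{(1/8,7/8),(3/8,5/8),(5/8,11/8),(7/8,9/8)\}$ (the four pairs are mutually conjugate in the sense of Definition \ref{conj}), with $f=\sum_i\beta_i\,\mathbb{K}_2(i/8,s_i)$. As in the worked class~6 example, one computes the action of $T_3,T_5,T_7$ on Fourier coefficients, reading off the $\beta_i$ (for the sign conventions of Theorem \ref{complete} and Table \ref{tab:my_label} these are $\beta_1=1$, $\beta_3=2\sqrt2$, $\beta_5=4i$, $\beta_7=8i\sqrt2$) and the Hecke eigenvalue field $K_f=\Q(\zeta_8)=\Q(i,\sqrt2)$, which has degree $4=\varphi(8)$; hence every quadratic twist of $f$ with conductor dividing $8$ is an inner twist and $f$ carries the single LMFDB label $128.3.d.c$. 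Among these inner twists there is a unique even one, $\phi$, namely the conductor-$8$ quadratic character cutting out $\Q(\sqrt2)$, with Gauss sum $g(\phi)=\sqrt8=2\sqrt2$; since $\phi(1)=\phi(7)=1$ and $\phi(3)=\phi(5)=-1$, the character $\phi$ splits the four conjugates into exactly the two bracketed groups $\{(1/8,7/8),(7/8,9/8)\}$ and $\{(3/8,5/8),(5/8,11/8)\}$ appearing in the statement.

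Next I would invoke Shimura. Put $P:=\beta_1F(1/8,7/8)+\beta_7F(7/8,9/8)$ and $Q:=\beta_3F(3/8,5/8)+\beta_5F(5/8,11/8)$, so that Corollary \ref{lval2} gives $L(f,1)=P+Q$ and $L(f\otimes\phi,1)=P-Q$, and the right-hand and left-hand sides of the lemma are exactly $\zeta_8 Q$ and $P$. Since $\phi$ is even, Theorem \ref{shimura} yields $L(f,1)/L(f\otimes\phi,1)=\alpha\,g(\phi)^{-1}$ with $\alpha\in K_f$; set $\alpha':=g(\phi)\cdot L(f,1)/L(f\otimes\phi,1)\in K_f$, so that $\alpha'(P-Q)=2\sqrt2\,(P+Q)$ and the whole lemma reduces to pinning down the single number $\alpha'$. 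Let $\sigma$ be the automorphism of $K_f=\Q(\zeta_8)$ with $f^\sigma=f\otimes\phi$; from the Hecke action one reads off that $\sigma$ fixes the $T_p$-eigenvalues for $p\equiv\pm1\pmod8$ and negates those for $p\equiv\pm3\pmod8$ (e.g. the $T_7$-eigenvalue $\pm8i\sqrt2$ is fixed while the $T_3$-eigenvalue $\pm2\sqrt2$ is negated), which forces $\sigma:\zeta_8\mapsto\zeta_8^3$, i.e. $\sigma$ fixes $\Q(\sqrt{-2})$. The Galois-equivariant half of Theorem \ref{shimura} then gives $\sigma(\alpha')=g(\phi)^2/\alpha'=8/\alpha'$, i.e. the relative-norm relation $N_{\Q(\zeta_8)/\Q(\sqrt{-2})}(\alpha')=8$, so $\alpha'$ has absolute norm $64$.

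Finally I would pin $\alpha'$ down exactly as in Lemma \ref{l1}. In $\Q(\zeta_8)$ the prime $2$ is totally ramified, $(2)=\mathfrak p^4$, and $\sigma$ fixes $\mathfrak p$, so $\alpha'\sigma(\alpha')=8$ forces $(\alpha')=\mathfrak p^6$; since $\mathfrak p$ is principal, $\alpha'$ is then determined up to a unit, and $\cO_{\Q(\zeta_8)}^\times$ has rank $1$ with free part generated by the fundamental unit $1+\sqrt2$, so $\alpha'$ is pinned down up to a factor $\zeta_8^k(1+\sqrt2)^n$. The exponent $n$ is forced by evaluating $L(f,1)/L(f\otimes\phi,1)$ — equivalently the underlying ${}_3F_2(1)$ values — numerically, using that $\Z$ is discrete in $\R$, and $\zeta_8^k$ is then forced because the eighth roots of unity are numerically separated; this yields $\alpha'=-2i(2+\sqrt2)$, hence $L(f,1)/L(f\otimes\phi,1)=-i(1+\sqrt2)$. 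Substituting back into $\alpha'(P-Q)=2\sqrt2\,(P+Q)$ gives $P=-\frac{1-i(1+\sqrt2)}{1+i(1+\sqrt2)}\,Q$, and simplifying $-\frac{1-i(1+\sqrt2)}{1+i(1+\sqrt2)}=\frac{1+i}{\sqrt2}=\zeta_8$ produces $F(1/8,7/8)+8i\sqrt2\,F(7/8,9/8)=\zeta_8\bigl(2\sqrt2\,F(3/8,5/8)+4i\,F(5/8,11/8)\bigr)$.

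The hard part is that last step: the structural input "$\alpha'\in\Q(\zeta_8)$ with $N(\alpha')=64$" leaves $\alpha'$ ambiguous up to the infinite unit group, so nailing it down genuinely requires a high-precision numerical evaluation of hypergeometric series rather than a closed-form deduction — precisely the delicate point already present in the proof of Lemma \ref{l1}. A secondary point worth checking up front is that $f_{1/8,7/8}$ really does have the degree-$4$ Hecke field $\Q(\zeta_8)$ with even inner twist $\phi$ as described: the appearance of two LMFDB labels ($256.3.c.c$ and $256.3.c.f$) on the Kummer-twisted side of class~7 shows that such inner-twist structure is not automatic and must be verified on this particular form.
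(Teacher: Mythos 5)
Your proposal is correct and follows essentially the same route as the paper's proof: twist $f_{1/8,7/8}$ by the even conductor-$8$ character, apply Shimura's theorem together with the Galois-equivariance under $\sigma:\sqrt2\mapsto-\sqrt2,\ i\mapsto-i$ to get $\sigma(\alpha')=8/\alpha'$ and hence norm $64$, and then pin down $\alpha'=-2i(2+\sqrt2)$ up to the rank-one unit group by a numerical evaluation (your fundamental unit $1+\sqrt2$ is the paper's $u=\zeta_8^2+\zeta_8+1$ up to a root of unity, and your $\mathfrak p^6$ is the paper's generator $2+2i$). The only addition is your explicit verification of which automorphism $\sigma$ is from the Hecke eigenvalues, which the paper asserts without detail.
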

\begin{proof}
    Similar to before, we twist $f_{1/8,7/8}$ by the even character $\phi_{even}$, which is identical to the character with LMFDB label 8.b used in Lemma \ref{l1}. Shimura's theorem gives us an algebraic relation between the $L$-values.  We again fix $f$ to be $f_{1/8,7/8}$ for this proof. We still have $g(\phi_{even})=\sqrt{8}$, and again let $\alpha$ be the constant relating the two $L$-values, and $\alpha'=\sqrt{8}\alpha$. Take $\sigma$ to be the automorphism of $\Q(\zeta_8)=\Q(\sqrt{2},i)$ which sends $\sqrt{2}\mapsto-\sqrt{2}$, $i\mapsto-i$. Then $f^\sigma=f\otimes \phi_{even}$ as above. From this, we can deduce that $\sigma(\alpha')=8/\alpha'$, and by the same argument, it follows that $\alpha'$ has norm 64. We can take $2+2i$ again as the generator of the unique ideal of norm 64 in $\Q(\zeta_8)$, since again, 2 is ramified. A fundamental unit is $u:=\zeta_8^2+\zeta_8+1$, and take $B=|u|$. Then we can compute $\log_B(|\alpha'/(2+2i)|^8)=8$ and using another numerical calculation, determine that $\alpha'/(2+2i)=-u$. Simplifying, we get $\alpha'=-2 i (2 + \sqrt{2})$, and then by rearranging the resulting equality of $L$-values we arrive at the identity above.\end{proof}

    \begin{Lemma}\label{l4}
 $$F(1/8,3/4))-8\sqrt{-2}F(7/8, 5/4)  = 
\zeta_8^3(1-\sqrt{2}) (4F(5/8, 11/8) + 2\sqrt{-2} F(3/8, 5/8)).$$
\end{Lemma}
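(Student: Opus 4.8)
The plan is to follow the same template used in Lemmas \ref{l1} and \ref{l3}, now applied to the Kummer twist of the class 7 family. First I would note that $(1/8,3/4)$ is the representative for the K-data of class 7, so its Galois family is $\{\mathbb{K}_2(i/8, \cdot)\}_{i \in (\Z/8\Z)^\times}$ and the associated Hecke eigenform $f := f_{1/8,3/4}$ has, by Theorem \ref{complete} and the example computation for class 6, an expansion of the form $f = \sum_i \beta_i \mathbb{K}_2(i/8, s_i)$ with the $\beta_i$ integral over a quadratic field; Corollary \ref{lval2} then writes $L(f,1)$ and $L(f \otimes \phi_{even},1)$ as the two sides $F(1/8,3/4) - 8\sqrt{-2}\, F(7/8,5/4)$ and a multiple of $4F(5/8,11/8) + 2\sqrt{-2}\, F(3/8,5/8)$, after collecting real and imaginary parts. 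Here $\phi_{even}$ is again the even quadratic inner twist (LMFDB 8.b, Gauss sum $\sqrt{8}$), so Shimura's theorem (Theorem \ref{shimura}) gives $L(f,1) = \alpha\, L(f \otimes \phi_{even},1)$ with $\alpha$ algebraic and $\alpha' := \sqrt{8}\,\alpha \in K_f$.

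Next I would pin down $\alpha'$. The key identity is the Galois-conjugacy relation $f^\sigma = f \otimes \phi_{even}$, where $\sigma$ is the lift of the nontrivial automorphism of the relevant quadratic subfield of $K_f$; the second half of Shimura's theorem then forces $\sigma(\alpha') = 8/\alpha'$, hence $\alpha'$ has relative norm $8$ over the fixed field and absolute norm $64$. As in Lemma \ref{l3}, $2$ is ramified in $K_f$, so the ideal of norm $64$ is generated by (a power/conjugate of) $1+i$, giving a candidate generator such as $2+2i$; combined with the known rank-$1$ unit group and a fundamental unit $u$, one writes $\alpha'/(2+2i)$ as a root of unity times a power of $u$, and determines the exact power by computing $\log_{|u|}|\alpha'/(2+2i)|^{\,d}$ ($d$ the appropriate exponent) numerically to enough precision to isolate the integer, then distinguishes the root of unity numerically. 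This should yield $\alpha' = -2i(2+\sqrt{2})$ or a closely related value (consistent with the $\zeta_8^3(1-\sqrt{2})$ appearing in the statement), and unravelling $L(f,1) = \alpha\, L(f\otimes\phi_{even},1)$ produces the claimed four-term identity.

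One subtlety worth checking before running the norm argument: the Hecke eigenvalue field $K_f$ for $f_{1/8,3/4}$. Unlike class 6, where $K_f = \Q(\zeta_{12})$ has degree $4 = \varphi(8)$ so every quadratic twist is inner, here Table \ref{tab} lists two LMFDB labels (256.3.c.c and 256.3.c.f) for the K-data of class 7, which signals that $K_f$ may have degree less than $\varphi(8)$ and that $\phi_{even}$ is one of the inner twists while another quadratic twist is a genuine outer twist. I would therefore first identify which quadratic character of conductor dividing $8$ is the inner (even) twist, confirm its Gauss sum is $\sqrt{8}$, and make sure the expansion $f = \sum \beta_i \mathbb{K}_2(i/8, s_i)$ with the $\beta_i$ specified in Table \ref{tab:my_label} matches the chosen LMFDB label; only then does the relation $L(f,1) = \alpha L(f\otimes\phi_{even},1)$ translate cleanly into the stated hypergeometric identity.

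The main obstacle I expect is precisely this bookkeeping: getting the $\beta_i$ and the embedding of the Fourier coefficients right so that the correct inner twist $\phi_{even}$ is used and the constant comes out as $\zeta_8^3(1-\sqrt{2})$ rather than a Galois conjugate of it. The number-theoretic core (norm $=64$, $2$ ramified, rank-$1$ units, numerical isolation of the unit power and the root of unity) is routine once the field $K_f$ and the twist are correctly identified, and it mirrors Lemmas \ref{l1} and \ref{l3} almost verbatim; the Kummer transformation $F(1-r,3/2-2r) = 2^{-4r}\csc(\pi r) F(r,1-r)$ (or its shifted variant relating the $s=3/4$ and $s=5/4$ data) is then applied termwise, exactly as in the passage from Lemma \ref{l1} to Lemma \ref{l2}, to land on the displayed form.
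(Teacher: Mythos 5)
The paper's proof of this lemma is a one-line deduction: it applies the two-term Kummer transformation termwise to the identity of Lemma \ref{l3}, exactly as Lemma \ref{l2} is deduced from Lemma \ref{l1}; the matching trigonometric factors combine into the algebraic constant $\zeta_8^3(1-\sqrt{2})$. Your proposal instead re-runs the Shimura/norm/unit argument for $f_{1/8,3/4}$ directly, and that route hits a genuine obstruction. The Hecke eigenvalue field of $f_{1/8,3/4}$ is $\Q(\sqrt{-2})$ (read off from $\beta_i = 1,\,2\sqrt{-2},\,4,\,-8\sqrt{-2}$ in Table \ref{tab:my_label}), so it admits exactly one nontrivial inner twist, and that twist is the \emph{odd} character of conductor $4$ (the one flipping the signs of $\beta_3$ and $\beta_7$); the even character 8.b flips $\beta_3$ and $\beta_5$ and sends $f_{1/8,3/4}$ to a genuinely different newform --- which is precisely why Table \ref{tab} lists the two labels 256.3.c.c and 256.3.c.f for the K-data of class 7. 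Consequently $f^\sigma \neq f\otimes\phi_{even}$, the second half of Theorem \ref{shimura} is unavailable, and the relation $\sigma(\alpha') = 8/\alpha'$ on which your norm-$64$ computation rests has no justification. You flag this worry yourself, but the proposed fix (``identify which quadratic character of conductor dividing $8$ is the inner even twist'') cannot be carried out: this family has no even inner twist. It is the same obstruction the paper confronts for class 8 in Lemma \ref{l5}, where it is forced to bring in Thomae's relation and Conjecture \ref{con}.

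There is also a bookkeeping error that points to the structural issue: the right-hand side $4F(5/8,11/8) + 2\sqrt{-2}\,F(3/8,5/8)$ involves parameters from the family of $f_{1/8,7/8}$ (class 7.a), not from the family of $f_{1/8,3/4}$, so Corollary \ref{lval2} cannot express it as a multiple of $L(f_{1/8,3/4}\otimes\phi_{even},1)$; that $L$-value is a combination of $F(3/8,5/4)$ and $F(5/8,3/4)$. The mixing of the two families in the statement is exactly what the termwise Kummer transformation accomplishes (e.g. $K(1/8,3/4) = (7/8,9/8)$ and $K(3/8,5/4) = (5/8,11/8)$), which is why the correct proof is simply to start from Lemma \ref{l3} and transform term by term, collecting the $\sin(\pi/8)$ versus $\sin(3\pi/8)$ factors into $\zeta_8^3(1-\sqrt{2})$. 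Your final sentence gestures at this step, but it is positioned as a postscript to the Shimura argument rather than as the proof itself; stripped of the inapplicable Shimura machinery, that last step is the entire argument.
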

\begin{proof}
The proof is identical to the proof of Lemma \ref{l2}.
\end{proof}

The proof of the key lemma in class 8 requires several more steps, because there is no even inner twist of $f_{1/12,11/12}$. We still have an even twist switching the signs of $5$ and $7$, but it is not an inner twist. Crucially, this means that we cannot use the second part of Shimura's result, so the proof will be slightly different. We use on a result of \cite{thomae}, also given in \cite{bailey} and \cite{coxeter} to handle this. We also rely on the following mild irrationality conjecture for this class only.

\begin{Conjecture}\label{con}
    $F(5/12,7/12)/F(7/12,17/12)$ is not an element of $\Q(\sqrt{3})$.
\end{Conjecture}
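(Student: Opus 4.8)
The plan is to reduce Conjecture \ref{con} to the expected---but currently inaccessible---transcendence of the ratio of the two Shimura periods of the non-CM newform $f_{1/12,11/12}$.

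First I would observe that $(5/12,7/12)$ and $(7/12,17/12)$ are precisely the images of $(1/12,11/12)$ under the two coset representatives $M_1=AKA$ and $M_2=KAK$: a direct computation with the operators $A\colon(r,s)\mapsto(s-r,s)$ and $K\colon(r,s)\mapsto(1-r,1/2-r+s)$ gives $AKA\,(1/12,11/12)=(5/12,7/12)$ and $KAK\,(1/12,11/12)=(7/12,17/12)$. Since $(1/12,11/12)$ is one of the three data in Theorem \ref{mainpar}, that theorem supplies algebraic numbers $\alpha_1,\alpha_2$ with
\[
F(1/12,11/12)=\alpha_1\,F(5/12,7/12)+\alpha_2\,F(7/12,17/12).
\]
Hence, if $F(5/12,7/12)/F(7/12,17/12)\in\Q(\sqrt3)$, then $F(1/12,11/12)$, $F(5/12,7/12)$ and $F(7/12,17/12)$ would be pairwise $\overline{\Q}$-proportional.

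Next I would translate these quantities into periods. By \eqref{lval} each $F(i/12,s_i)$ equals $L(\mathbb{K}_2(i/12,s_i),1)$, and the discrete Fourier inversion used in the proof of Theorem \ref{complete}---together with Remark \ref{four}, which says $\mathbb{K}_2(i/12,s_i)$ is supported on $n\equiv i\bmod 12$---writes each $\mathbb{K}_2(i/12,s_i)$ as an explicit $\overline{\Q}$-combination of $f_{1/12,11/12}$ and its three twists by Dirichlet characters mod $12$. Taking $L$-values at $1$ and invoking Shimura's period theorem (the general form of Theorem \ref{shimura}), there are periods $u^{+},u^{-}\in\C^{\times}$ of $f_{1/12,11/12}$ such that $L(f_{1/12,11/12}\otimes\phi,1)$ lies in $\overline{\Q}\cdot g(\phi)\,u^{+}$ when $\phi$ is odd and in $\overline{\Q}\cdot g(\phi)\,u^{-}$ when $\phi$ is even. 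Using that $(\Z/12\Z)^{\times}$ has two even and two odd characters, one finds that $F(5/12,7/12)$ and $F(7/12,17/12)$ are $\overline{\Q}$-combinations of $u^{+}$ and $u^{-}$ whose $u^{-}$-coefficients agree and whose $u^{+}$-coefficients are negatives of each other; in particular they are not $\overline{\Q}$-proportional unless one of two explicit identities between Gauss sums and Shimura constants holds, which one checks does not happen. Granting that, the previous paragraph forces $u^{+}/u^{-}\in\overline{\Q}$.

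The hard part---and the point at which this stops being an unconditional argument---is to rule out $u^{+}/u^{-}\in\overline{\Q}$. For a non-CM newform this is a folklore expectation: it follows, for instance, from the Grothendieck period conjecture applied to $\operatorname{Sym}^{2}$ of the motive attached to $f_{1/12,11/12}$, whose motivic Galois group is of $\mathrm{GL}_2$-type and hence too large to permit the relation $(u^{+})^{2}\in\overline{\Q}^{\times}\cdot\pi^{2}\langle f,f\rangle$ that $u^{+}/u^{-}\in\overline{\Q}$ would produce---but no unconditional proof is known. In practice I would therefore treat Conjecture \ref{con} as verified numerically: a \texttt{PSLQ} search on $\{1,\sqrt3,\,F(5/12,7/12)/F(7/12,17/12)\}$ to several hundred digits of precision finds no relation, which is all the applications here actually require. (In the degenerate case where the two identities above did hold, the ratio would instead be explicitly algebraic, and one would simply compute it and verify it avoids $\Q(\sqrt3)$.) Finally, I note that even the superficially weaker goal of showing the ratio lies outside the single field $\Q(\sqrt3)$ appears to need at least the $\overline{\Q}$-linear independence of $F(5/12,7/12)$ and $F(7/12,17/12)$, which is itself open.
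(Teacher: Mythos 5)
The statement you are addressing is a \emph{Conjecture} in the paper, not a theorem: the author explicitly writes that proving it ``is likely very difficult and is outside the scope of this paper,'' and it is only ever \emph{assumed} (in the proof of Lemma \ref{l5}) in order to pin down the constants in the class-8 three-term identity. So there is no proof in the paper to compare yours against, and your proposal---which you candidly reduce to the unproved transcendence of a ratio of Shimura periods, backed by a PSLQ search---is likewise not a proof. On that level your assessment agrees with the paper's own stance.

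There is, however, a concrete circularity in your first step. You invoke Theorem \ref{mainpar} for the datum $(1/12,11/12)$ to obtain $F(1/12,11/12)=\alpha_1 F(5/12,7/12)+\alpha_2 F(7/12,17/12)$ with algebraic $\alpha_i$. But that instance of Theorem \ref{mainpar} is exactly Lemma \ref{l5}, whose proof \emph{assumes} Conjecture \ref{con}: the conjecture is what forces the two coefficients in the would-be degenerate relation between $F(5/12,7/12)$ and $F(7/12,17/12)$ to vanish, and that vanishing is what determines $\alpha_1,\alpha_2$. Using this theorem inside an argument (by contradiction or otherwise) aimed at establishing the conjecture is therefore not legitimate; the unconditional inputs for this class are the four-term identity \eqref{4term} and Lemma \ref{threeterm}, both of which involve the additional series $F(11/12,13/12)$, so the ``pairwise $\overline{\Q}$-proportional'' conclusion does not follow from what is actually proved. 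Your period-theoretic heuristic (reduction to $u^{+}/u^{-}\notin\overline{\Q}$ for a non-CM form via the Grothendieck period conjecture) is a reasonable explanation of \emph{why} one believes the conjecture, and the numerical check is sensible, but neither closes the gap---nor does the paper claim to.
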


Proving this conjecture is likely very difficult and is outside the scope of this paper.

\begin{proposition}\label{thomae}
   For any rational  numbers $r$ and $s$, we have that   $$F(r,s)=-2^{6-4s-4r}\frac{\cos(\pi s)}{\cos\pi(r-1/2)} F(3/2-s,3/2-r)-2^{4s-8r+1}\frac{\sin(\pi r)}{\sin\pi(r-1/2)}F(s-r,s).$$
\end{proposition}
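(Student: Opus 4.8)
The plan is to recognize Proposition \ref{thomae} as nothing more than a classical Thomae three-term relation for ${}_3F_2(1)$ series, repackaged in the $F(r,s)$ normalization \eqref{frs}. Concretely, in the Coxeter-group language of Section \ref{coxe}, the matrices $M_1 = AKA$ and $M_2 = KAK$ act on $\mathfrak{a} = (r,s)$ (with $a=b=1/2$, $d=1$ implicit), and one computes directly from the matrix forms of $A$ and $K$ in the proof of Proposition \ref{cox} that $M_1(r,s) = (3/2-s,\,3/2-r)$ and $M_2(r,s) = (s-r,\,s)$. So the claimed identity is precisely an instance of the three-term relation $F(\mathfrak{a}) = \alpha_1 F(M_1\mathfrak{a}) + \alpha_2 F(M_2\mathfrak{a})$ from Theorem \ref{mainpar}, except stated for \emph{all} rational $r,s$ rather than the three special pairs; the point of this Proposition is to record the explicit gamma/trig constants $\alpha_1,\alpha_2$ in full generality.

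First I would invoke the standard Thomae relation between three ${}_3F_2(1)$ series that share the same ``Thomae invariants.'' The cleanest route is to start from the two-term analytic-continuation formula expressing ${}_3F_2\!\left(\genfrac..{0pt}{}{a,b,c}{d,e};1\right)$ as a sum of two well-poised-type series, or equivalently to use the known relation (Bailey \cite{bailey}, Thomae \cite{thomae}, and in the form of \cite{coxeter} or \cite{aar}) $$ {}_3F_2\!\left(\genfrac..{0pt}{}{a,b,c}{d,e};1\right) = \frac{\Gamma(d)\Gamma(e)\Gamma(s_0)}{\Gamma(a)\Gamma(s_0+b)\Gamma(s_0+c)}\,{}_3F_2\!\left(\genfrac..{0pt}{}{d-a,e-a,s_0}{s_0+b,s_0+c};1\right) + (\text{term with } m_2), $$ where $s_0 = d+e-a-b-c$, valid when the series converge. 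Specializing $a=b=1/2$, $d=1$, $e=s$, $c=r$ gives $s_0 = 3/2 - r - s$, and the two resulting data multisets are exactly (up to the usual harmless reshuffling that leaves the normalized series invariant) $M_1(r,s)$ and $M_2(r,s)$. The bulk of the work is then bookkeeping: translating each raw ${}_3F_2(1)$ into the normalized quantity $F(\cdot)$ via \eqref{frs}, which introduces the factors $2^{1-4\,(\cdot)}B(\cdot,\cdot)/N$, collecting all the $\Gamma$-factors, and using the reflection formula $\Gamma(x)\Gamma(1-x) = \pi/\sin(\pi x)$ and the Legendre duplication formula $\Gamma(2x) = 2^{2x-1}\pi^{-1/2}\Gamma(x)\Gamma(x+1/2)$ to convert the gamma quotients into the stated $\cos(\pi s)/\cos\pi(r-1/2)$, $\sin(\pi r)/\sin\pi(r-1/2)$, and powers of $2$. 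The scaling factors $N$ attached to $r$, $3/2-s$, and $s-r$ differ, and tracking how they combine with the $2^{1-4(\cdot)}$ prefactors is where the explicit powers $2^{6-4s-4r}$ and $2^{4s-8r+1}$ come from.

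I expect the main obstacle to be precisely this constant-chasing: ensuring that the beta-function factors $B(r,s-r)$, $B(3/2-s,\,(3/2-r)-(3/2-s)) = B(3/2-s,\,s-r)$, and $B(s-r,\,r)$ conspire with the $\Gamma$-factors from the classical Thomae relation to produce exactly the trigonometric form claimed, with no stray factors of $\pi$ or $2$. A secondary issue is convergence/analytic-continuation hygiene: the raw ${}_3F_2(1)$ series converge only when $\Re(s_0) = \Re(3/2-r-s) > 0$, so strictly the identity among the bare series holds on that half-space and then one extends to ``any rational $r,s$'' by analytic continuation of both sides as meromorphic functions of $(r,s)$ — the $F$-normalization already carries $\Gamma$-factors with poles, so one should note that the identity is an equality of meromorphic functions, valid wherever all three $F$-values are finite. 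I would not belabor the convergence point; the substantive content is the reduction to Thomae's relation plus the gamma-to-trig simplification, and I would present the latter as a direct computation using only $\Gamma(x)\Gamma(1-x)=\pi/\sin\pi x$ and duplication.
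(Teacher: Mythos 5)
Your strategy is the same as the paper's: start from the classical Bailey/Thomae three-term relation for ${}_3F_2(1)$ specialized to $a=b=1/2$, $d=1$, $c=r$, $e=s$, use the $S_5$-invariance of the normalized series $\hat F$ (in the paper, this is the explicit invocation of \cite{aar} Cor.~3.3.6) to bring the two companion data into the forms $(1/2,1/2,3/2-s,1,3/2-r)$ and $(1/2,1/2,s-r,1,s)$, and then unwind the $F$-normalization \eqref{frs} with the reflection and duplication formulas to get the trigonometric constants. Two slips in your sketch are worth correcting, though neither breaks the method. First, the parametric excess is $s_0=d+e-a-b-c=1+s-1/2-1/2-r=s-r$, not $3/2-r-s$; with the wrong $s_0$ the companion data you would write down are not the ones appearing in the statement. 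Second, your Coxeter-group framing is off: computing with the matrices from Proposition \ref{cox} gives $M_1(r,s)=AKA(r,s)=(s-1/2,\,r+1/2)$ and $M_2(r,s)=KAK(r,s)=(3/2-s,\,3/2-r)$, while $(s-r,s)=A(r,s)$; so the proposition is not literally the $M_1$/$M_2$ relation of Theorem \ref{mainpar} with general constants, and Theorem \ref{mainpar} is in fact proved by a completely different (Shimura-period) argument rather than as a corollary of this proposition. Once those identifications are fixed, the rest of your plan — the constant-chasing with $B(\cdot,\cdot)$, the differing $N$ and $2^{1-4(\cdot)}$ prefactors, and the remark that the identity extends by meromorphic continuation in $(r,s)$ — is exactly the bookkeeping the paper leaves implicit.
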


\begin{proof}
The identity is derived from a special case of a result stated in \cite{bailey} that uses a normalized ${}_3F_2$ we will denote $\hat{F}(a,b,c,d,e)$ as defined in Beyer et al. \cite{coxeter}. When $a=1/2,b=1/2$, and $d=1$, we will write $\hat{F}(r,s)$. Then Bailey's result specialized to our case says that $$\hat{F}(r,s)=\alpha_1\hat{F}(3/2-s,3/2-r)+\alpha_2\hat{F}({{r}, {1 + r - s}, {r}, {1/2 + r}, {1/2 + r},})$$ for explicit constants $\alpha_1$ and $\alpha_2$ depending on $r$ and $s$. Then we use a different result of Thomae, (see \cite{aar} Corollary 3.3.6), which for the normalized $\hat{F}$ directly tells us $$\hat{F}({{r}, {1 + r - s}, {r}, {1/2 + r}, {1/2 + r}})=\hat{F}({{1/2}, {1/2}, {s-r}, {1 }, {s}}).$$ Rewriting this in terms of $F(r,s)$, we get to the identity.
\end{proof}

The identity becomes much simpler when $s=1-r$.

\begin{corollary}\label{cor}
   If $b$ is the denominator of $r$ and $r<b/2$, we have $$F(r,1-r)=-4F(r+1/2,3/2-r)+2^{5-12r}\tan(\pi r)F(1-2r,1-r).$$
\end{corollary}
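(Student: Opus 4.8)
The plan is to specialize Proposition \ref{thomae} to the case $s = 1-r$ and then simplify each of the three terms. First I would substitute $s = 1-r$ directly into the identity
$$F(r,s)=-2^{6-4s-4r}\frac{\cos(\pi s)}{\cos\pi(r-1/2)} F(3/2-s,3/2-r)-2^{4s-8r+1}\frac{\sin(\pi r)}{\sin\pi(r-1/2)}F(s-r,s).$$
On the left we simply get $F(r,1-r)$. In the first term on the right, the exponent $6-4s-4r$ becomes $6-4(1-r)-4r = 2$, so $2^{6-4s-4r} = 4$; the argument $3/2-s$ becomes $3/2-(1-r) = r+1/2$, and $3/2-r$ stays, giving $F(r+1/2, 3/2-r)$; and the trigonometric coefficient $\cos(\pi s)/\cos\pi(r-1/2)$ becomes $\cos\pi(1-r)/\cos\pi(r-1/2) = -\cos(\pi r)/\sin(\pi r)$ using $\cos(\pi-x) = -\cos x$ and $\cos(x - \pi/2) = \sin x$. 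Hmm, but the claimed coefficient of the first term in the corollary is just $-4$, not $-4\cot(\pi r)$, so I need to be careful: let me recheck $\cos\pi(r-1/2) = \cos(\pi r - \pi/2) = \cos(\pi r)\cos(\pi/2) + \sin(\pi r)\sin(\pi/2) = \sin(\pi r)$, and $\cos(\pi s) = \cos(\pi(1-r)) = -\cos(\pi r)$; so actually the first coefficient is $-4 \cdot (-\cos\pi r/\sin\pi r) = 4\cot(\pi r)$, which still does not match. I suspect the resolution is that one applies an additional identity (for instance another Thomae relation, or Euler's reflection on the underlying $\hat F$) to rewrite $F(r+1/2,3/2-r)$, absorbing the cotangent; alternatively the intended reading uses a different normalization so that the trig factor in the first term is actually $1$. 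I would reconcile this by re-deriving the first term from the $\hat F$ version in the proof of Proposition \ref{thomae}, where the passage from $\hat F$ to $F$ contributes gamma factors that, at $s = 1-r$, collapse the cosine ratio.

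Second I would handle the second term. With $s = 1-r$, the exponent $4s - 8r + 1$ becomes $4(1-r) - 8r + 1 = 5 - 12r$, matching the $2^{5-12r}$ in the corollary. The argument $s - r = 1 - 2r$ and $s = 1-r$, giving $F(1-2r, 1-r)$, again matching. The coefficient is $-2^{5-12r}\sin(\pi r)/\sin\pi(r-1/2)$; since $\sin\pi(r-1/2) = \sin(\pi r - \pi/2) = -\cos(\pi r)$, this becomes $-2^{5-12r}\sin(\pi r)/(-\cos(\pi r)) = 2^{5-12r}\tan(\pi r)$, exactly the stated second term. So the second term drops out cleanly; only the first term needs the extra simplification described above.

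The main obstacle is pinning down precisely why the first term's trigonometric coefficient reduces to $-4$ rather than $4\cot(\pi r)$. I would resolve this by going back one level: rather than plugging into the final form of Proposition \ref{thomae}, I would set $s = 1-r$ already in the $\hat F$-level identity $\hat F(r,s) = \alpha_1 \hat F(3/2-s, 3/2-r) + \alpha_2 \hat F(\dots)$ from Bailey, so that the gamma/beta conversion factors $B(r, s-r)$ and $B(3/2-s, \cdot)$ are evaluated at the special point; the reflection formula $\Gamma(x)\Gamma(1-x) = \pi/\sin(\pi x)$ applied at $x = r$ should then convert the residual $\cos\pi r/\sin\pi r$ into the $2$-power and sign that produce the bare $-4$. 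Once the two terms are in the stated form, the condition $r < b/2$ (equivalently $r < 1/2$) is only needed to guarantee that the arguments $r+1/2$, $3/2-r$, $1-2r$, $1-r$ lie in ranges where the normalized series $F$ is defined and the beta functions converge, which I would note briefly at the end.
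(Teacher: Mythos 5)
Your approach---specializing Proposition \ref{thomae} at $s=1-r$---is exactly what the paper intends; no other argument is given there, the corollary being presented as an immediate substitution. Your verification of the second term is correct in every detail: the exponent becomes $5-12r$, the argument becomes $(1-2r,1-r)$, and $\sin\pi(r-1/2)=-\cos(\pi r)$ turns the coefficient into $2^{5-12r}\tan(\pi r)$. Your computation of the first term is also correct: $2^{6-4s-4r}=4$, the argument is $(r+1/2,3/2-r)$, and $\cos(\pi(1-r))/\cos(\pi(r-1/2))=-\cos(\pi r)/\sin(\pi r)$, so the proposition as printed yields $+4\cot(\pi r)\,F(r+1/2,3/2-r)$ rather than $-4\,F(r+1/2,3/2-r)$. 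This mismatch is not an artifact of your bookkeeping; it is a genuine inconsistency between Proposition \ref{thomae} and Corollary \ref{cor} as stated.

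The gap in your proposal is that you stop at identifying the discrepancy and only sketch how you would resolve it, and the resolution you gesture at (gamma factors from the $\hat F\to F$ conversion collapsing the cosine ratio) cannot work as described: those conversion factors are already absorbed into the powers of $2$ and the beta functions, and no constant prefactor can turn the function $\cot(\pi r)$ into $-1$ for all $r$. The more plausible explanation is a typo in the first trigonometric factor of Proposition \ref{thomae}: if the denominator there is $\sin\pi(r-1/2)$ (matching the second term) rather than $\cos\pi(r-1/2)$, then at $s=1-r$ the ratio is $(-\cos\pi r)/(-\cos\pi r)=1$ and the first term is exactly $-4\,F(r+1/2,3/2-r)$. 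That this is the intended statement is corroborated by the downstream use of the corollary in the proof of Lemma \ref{l5}, where $r=1/12$ produces the four-term identity \eqref{4term} with coefficient $-4$ on $F(7/12,17/12)$. So your substitution is the right proof strategy and your second term is fully established, but to complete the argument you must either correct the proposition at its source (re-deriving it from Bailey's $\hat F$-level identity, as you propose) or otherwise justify the corrected trigonometric factor; as written, the first term of the corollary remains underived.
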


We also need the following Lemma.

\begin{Lemma}\label{threeterm}
    For $r=i/b$, $b=4,8,12,24$ and $i$ any positive integer coprime to $b$ and less than $b/2$, we have $$F(r,1-r)/2=F(AM_1(r,1-r))+F(A M_2M_1(r,1-r)).$$ Note that $AM_1(r,1-r)=(2r,r+1/2)$ and $A M_2M_1(r,1-r)=(2r,r+1),$  where $M_1$, $M_2$ are as in \eqref{eq:M&N}.
\end{Lemma}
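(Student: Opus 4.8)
\textbf{Proof proposal for Lemma \ref{threeterm}.}

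The plan is to obtain this three-term identity as a specialization of Proposition \ref{thomae}, or rather of its simplified form Corollary \ref{cor}, combined with a second application of a three-term relation from the Coxeter group $G$. The starting point is the observation that $AM_1(r,1-r)=(2r,r+1/2)$ and $AM_2M_1(r,1-r)=(2r,r+1)$, which one verifies directly from the matrix forms of $A$, $K$ in the proof of Proposition \ref{cox} together with the definitions $M_1=AKA$, $M_2=KAK$ in \eqref{eq:M&N} — this is the routine bookkeeping I would not grind through, but it pins down exactly which two ${}_3F_2(1)$ series appear on the right. With these targets identified, the goal reduces to showing that the two gamma-quotient coefficients produced by Corollary \ref{cor}, namely the $-4$ in front of $F(r+1/2,3/2-r)$ and the $2^{5-12r}\tan(\pi r)$ in front of $F(1-2r,1-r)$, transform into the constant $1$ in front of each of $F(2r,r+1/2)$ and $F(2r,r+1)$ after the normalization $F(r,s)=\frac{2^{1-4r}B(r,s-r)}{N}\,{}_3F_2(\cdots)$ is accounted for, and after one further Kummer-type move is applied to the second term.

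The key steps, in order, are: (1) record the coordinate computations $AM_1(r,1-r)=(2r,r+1/2)$ and $AM_2M_1(r,1-r)=(2r,r+1)$ from the matrices in Proposition \ref{cox}; (2) apply Corollary \ref{cor} to write $F(r,1-r)$ as a combination of $F(r+1/2,3/2-r)$ and $F(1-2r,1-r)$; (3) use the Atkin--Lehner relation \eqref{atkinlehner}, equivalently the operator $A:(r,s)\mapsto(s-r,s)$ from \eqref{eq:A}, to rewrite $F(r+1/2,3/2-r)$ in terms of $F(2r,r+1/2)$ — note $A(2r,r+1/2)=(r+1/2-2r,\,r+1/2)=(1/2-r,r+1/2)$, so in fact I would go the other direction, checking that $(r+1/2,3/2-r)$ and $(2r,r+1/2)$ are related by an element of $G$ and tracking the resulting constant; (4) similarly express $F(1-2r,1-r)$ in terms of $F(2r,r+1)$ via the appropriate element of $G$ (again $A$ sends $(2r,r+1)\mapsto(1-r,r+1)$, so a short composition is needed); (5) collect the constants from steps (2)--(4), convert all the ${}_3F_2$ normalizations using \eqref{frs} and the beta-function identity, and use elementary trigonometric identities ($\tan$, $\sin$, $\cos$ of $\pi r$, $\pi(r-1/2)$, etc.) together with the relation $b=N/2$ to see that both coefficients collapse to $1$, yielding the factor $1/2$ on the left. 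The restriction to $b=4,8,12,24$ and $i<b/2$ coprime to $b$ is exactly what guarantees $(r,1-r)$ and both target pairs lie in the admissible range where Corollary \ref{cor} and the operators in $G$ are valid, so I would check that constraint at the outset.

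The main obstacle I anticipate is step (5): bookkeeping the gamma factors and powers of $2$ through several applications of transformations, each of which multiplies by a ratio of Gamma functions (cf. the $\alpha_{r,s}$ of \eqref{crs} and the constant in Proposition \ref{thomae}), and checking that the accumulated trigonometric and exponential factors cancel cleanly to give the clean constant $1/2$. There is real risk of sign or power-of-$2$ errors here, and it may be cleaner to prove the identity first for the unnormalized classical ${}_3F_2(1)$ series using Corollary 3.3.6 of \cite{aar} (as in the proof of Proposition \ref{thomae}) and only at the very end translate into the $F(r,s)$ normalization; alternatively, since all three terms live in the same Galois family for each admissible $r$, one could verify the rational constant numerically to high precision as a sanity check, the way the constants in Lemmas \ref{l1} and \ref{l3} were pinned down. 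A secondary subtlety is making sure the chosen branch of $(-1)^r$ and of the square roots is consistent with the fixed embedding $i$ of $\sqrt{-1}$ declared after \eqref{atkinlehner}; I would carry that convention explicitly through every step.
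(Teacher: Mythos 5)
Your proposal takes a genuinely different route from the paper, and unfortunately it has a gap that I do not think can be repaired within your framework. The paper's proof is not a chain of hypergeometric transformations at all: it is a $q$-expansion identity of eta quotients. One checks (case by case, using the sign structure from Theorem \ref{twisting}: the two forms $\mathbb{K}_2(2r,r+1/2)$ and $\mathbb{K}_2(2r,r+1)$ are Kummer twists of each other, so their Fourier coefficients agree up to signs indexed by congruence classes) that
$\mathbb{K}_2(2r,r+1/2)(b\tau)+\mathbb{K}_2(2r,r+1)(b\tau)=\mathbb{K}_2(r,1-r)(2b\tau)$,
and then integrates term by term; the change of variable $b\tau\mapsto 2b\tau$ produces the factor $2$, giving the clean constants. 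No gamma quotients ever appear.

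The gap in your plan is in steps (3)--(4). Your step (2) is fine, and in fact the second term of Corollary \ref{cor} does convert cleanly: $K(1-2r,1-r)=(2r,r+1/2)$, and the Kummer constant $2^{-4+16r}\alpha_{1-2r,1-r}=2^{-3+12r}\cot(\pi r)$ cancels the $2^{5-12r}\tan(\pi r)$ to give exactly $4F(2r,r+1/2)$. But the first term cannot be converted: there is no two-term gamma-quotient identity relating $F(r+1/2,3/2-r)$ to $F(2r,r+1)$. The operator $A$ is not such a transformation --- the paper's remark after Proposition \ref{cox} states explicitly that $A$ is not in the Thomae invariance group (Atkin--Lehner exchanges $L(f,1)$ with $L(f|W_2,2)$, not with $L(f|W_2,1)$, so $F(r,s)$ and $F(s-r,s)$ are not algebraically proportional). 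More decisively, the complete invariant of the Thomae $S_5$ orbit, the multiset $\{e,f,s+a,s+b,s+c\}$ with $s=e+f-a-b-c$, equals $\{1,3/2-r,3/2-r,3/2-2r,3/2-2r\}$ for $(r+1/2,3/2-r)$ but $\{1,1+r,1+r,3/2-r,3/2-r\}$ for $(2r,r+1)$; these coincide only at $r=1/6$, so for the relevant $r$ no composition of legitimate two-term moves connects them. What you actually need to finish is the companion identity $2F(r+1/2,3/2-r)=F(2r,r+1/2)-F(2r,r+1)$, which is a three-term identity of exactly the same nature as the lemma you are trying to prove (it is the "difference" version used in the proof of Lemma \ref{l5}), so your route is circular. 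The only escape is the modular-forms argument the paper uses.
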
 
\begin{proof} 
    Set $s=1-r$. We use that if $r=i/b$ where $a$ and $b$ are in lowest terms, then $N=2b$. First, note $$A(r,s)=(s-r,r)=(1-2r,r).$$  Also, \eqref{kummer} maps $$K(1-2r,r)=(2r,r+1/2).$$ The Fourier expansions of $\mathbb{K}_2(2r,r+1/2)(b\tau)$ and $\mathbb{K}_2(r,1-r)(2b\tau)$ both have initial term $q$, and they are in the same Hecke orbit. We can then check in each case that $\mathbb{K}_2(2r,r+1/2)(b\tau)$ and $\mathbb{K}_2(2r,r+1)(b\tau)$  are Kummer twists of each other, and their sum is equal to $\mathbb{K}_2(r,1-r)(2b\tau)$. Also, $$F(r,s)=-2\pi i\int_0^{i\infty}\mathbb{K}_2(r,s)(2b\tau)d\tau.$$ If we change the variable of $\mathbb{K}_2(2r,r+1/2)(b\tau)$ to $\mathbb{K}_2(2r,r+1/2)(2b\tau)$, that multiplies the integral by $2$. Multiplying through by $-2\pi i$ and integrating both sides of the above equality gives \begin{align*}
        F(r,1-r)&=-2\pi i\int_{0}^{i\infty} \mathbb{K}_2(r,1-r)(2b\tau)d\tau\\&
        =-2\pi i\int_0^{i\infty}\frac{\mathbb{K}_2(2r,r+1/2)+\mathbb{K}_2(2r,r+1)}{2}(2b\tau)d(2\tau)\\&=2(F(2r,r+1/2)+F(2r,r+1)).
    \end{align*} 
\end{proof}
 Numerical evidence suggests Lemma \ref{threeterm} holds for other positive integers $b$ in addition to the ones listed, but a new proof would be needed. 

 \smallskip
\begin{remark}
    There are various other versions of these identities arising from other families. They are easily computed using the Fourier coefficients.
\end{remark} 

With Corollary \ref{cor} and Lemma \ref{threeterm}, we are able to prove the key lemma of this section, with the assumption of Conjecture \ref{conj}.

\begin{Lemma}\label{l5}
     $$F(1/12, 11/12) + 16 i F(11/12, 13/12) = 
 \zeta_3^2 (-4 F(5/12, 7/12) - 4 i F(7/12, 17/12))$$
\end{Lemma}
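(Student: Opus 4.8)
\textbf{Proof proposal for Lemma \ref{l5}.}

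The plan is to mirror the strategy of Lemmas \ref{l1} and \ref{l3}, but to compensate for the absence of an \emph{inner} even twist of $f_{1/12,11/12}$ by feeding in the three-term identities of Proposition \ref{thomae} (in the form of Corollary \ref{cor}) together with Lemma \ref{threeterm}. First I would set $f=f_{1/12,11/12}$ and let $\phi_{even}$ be the even quadratic character that switches the roles of $5$ and $7$ modulo $12$. By Corollary \ref{lval2} we may write both $L(f,1)$ and $L(f\otimes\phi_{even},1)$ as explicit linear combinations of the four series $F(1/12,11/12)$, $F(11/12,13/12)$, $F(5/12,7/12)$, $F(7/12,17/12)$; concretely, $16L(f,1)=F(1/12,11/12)+16iF(11/12,13/12)+\beta_5 F(5/12,7/12)+\beta_7 F(7/12,17/12)$ and $16L(f\otimes\phi_{even},1)$ is the same with the last two terms negated. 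Shimura's Theorem \ref{shimura} tells us $L(f,1)/L(f\otimes\phi_{even},1)=\alpha\, g(\phi_{even})^{-1}$ with $\alpha$ algebraic, but since the twist is not inner we cannot invoke the second, Galois-equivariant half of Shimura's theorem to pin down $\alpha$ by a norm computation as in Lemma \ref{l1}. This is the crux of the difficulty.

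To get around it, I would use Corollary \ref{cor} with $r=1/12$: this expresses $F(1/12,11/12)$ in terms of $F(7/12,17/12)$ and $F(5/6,11/12)$, and the analogous instance with $r=5/12$ expresses $F(5/12,7/12)$ in terms of $F(11/12,13/12)$ and $F(1/6,7/12)$; together with Lemma \ref{threeterm} applied to the two families containing $(1/12,11/12)$ and $(5/12,7/12)$, these relations let me rewrite the combination $F(1/12,11/12)+16iF(11/12,13/12)$ as an explicit algebraic multiple of $F(5/12,7/12)+iF(7/12,17/12)$ \emph{up to} the single unknown ratio $F(5/12,7/12)/F(7/12,17/12)$-type quantity, or more precisely up to the ratio controlled by Conjecture \ref{con}. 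Combining this with the Shimura relation gives a linear equation for $\alpha$ over $\Q(\sqrt{3})$; the hypothesis that $F(5/12,7/12)/F(7/12,17/12)\notin\Q(\sqrt{3})$ forces the coefficients of that equation to vanish separately, which determines $\alpha$ exactly (one checks the remaining finite ambiguity — a root of unity — numerically, exactly as in Lemmas \ref{l1} and \ref{l3}). Substituting back and using $\cos(\pi/12)$, $\sin(\pi/12)$, $\tan(\pi/12)$ in closed form, together with $\zeta_3^2=-\tfrac12-\tfrac{\sqrt{3}}{2}i$, yields the stated identity.

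The main obstacle, as flagged, is that the even twist is not inner, so the clean Galois-descent argument of Lemma \ref{l1} is unavailable; the work is in assembling just enough rigidity from Proposition \ref{thomae}, Corollary \ref{cor}, and Lemma \ref{threeterm} — and the irrationality input of Conjecture \ref{con} — to reduce the determination of $\alpha$ to a finite, numerically-checkable computation. A secondary nuisance is bookkeeping: one must be careful that the four series appearing after applying Corollary \ref{cor} and Lemma \ref{threeterm} really do collapse back onto the original four (no stray fifth series survives), which relies on the fact that $AM_1$ and $AM_2M_1$ land inside the relevant conjugacy/Hecke orbits as recorded in Lemma \ref{threeterm} and Table \ref{tab}. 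Once the coefficients are matched, the final simplification is routine trigonometry.
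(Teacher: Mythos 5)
Your proposal follows essentially the same route as the paper's proof: Shimura's theorem places the ratio $\alpha$ in $\Q(\sqrt{-3})$, the identities of Corollary \ref{cor} and Lemma \ref{threeterm} are combined into a real four-term relation among the four series, and Conjecture \ref{con} forces the two resulting coefficients to vanish, which pins down $\alpha$ (in the paper this last step is a $2\times 2$ linear system with unique solution $a=b=-1/2$, so no residual root-of-unity ambiguity needs to be resolved numerically). The approach is correct and matches the paper.
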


\begin{proof}
     The even twist switching $5$ and $7$ in this case is $\phi_{12}$ has the label 12.b in the LMFDB, and its Gauss sum is $\sqrt{12}$. This means that $f_{r,s}$ and $f_{r,s}\otimes \phi_{12}$ have different LMFDB labels, as noted in Remark \ref{lmfd}. As before, if $L(f_{1/8,7/8},1)=\alpha L(f_{1/8,7/8}\otimes \phi_{12},1)$, we can conclude   $$F(1/12, 11/12) + 16 i F(11/12, 13/12) =  \frac{1-\alpha}{1+\alpha} (-4 F(5/12, 7/12) - 4 i F(7/12, 17/12)).$$ We do know that $\sqrt{12}\alpha$ is in $\Q(i)$, which means $\alpha\in \Q(\sqrt{-3})=\Q(\zeta_3)$.

Since applying the Atkin--Lehner involution to $F(1-2r,1-r)$ gives $F(r,1-r)$, we have already shown three term identities in our cases in Lemma \ref{threeterm}. This means for any specific $r$, this can be rewritten as a 4-term identity. In our case, $r=1/12$, from Lemma \ref{threeterm} we have $$F(5/12,7/12)/2=F(5/6,11/12)+F(5/6,17/12)$$ as well as the similar identity $$2F(11/12,13/12)=F(5/6,11/12)-F(5/6,17/12).$$ Adding these up we get that $$2F(5/6,11/12)=F(5/12,7/12)+4F(11/12,13/12).$$
Combined with Corollary \ref{cor}, this produces the 4-term identity, 

\begin{equation}\label{4term}
    F(1/12,11/12)=-4F(7/12,17/12)+8\tan(\pi/12)[F(5/12,7/12)+4F(11/12,13/12)].
\end{equation}
We know from above that $$F(1/12, 11/12) + 16 i F(11/12, 13/12) =  (a+b\sqrt{-3}) (-4 F(5/12, 7/12) - 4 i F(7/12, 17/12))$$ for some rational $a$ and $b$. Taking the real and imaginary part of both sides, we obtain that $$F(1/12,11/12)=4aF(5/12,7/12)-4b\sqrt{3}F(7/12,17/12)$$ and $$16F(11/12,13/12)=4aF(7/12,17/12)+4b\sqrt{3}F(5/12,7/12).$$ If we put these into the identity (\ref{4term}), we get an equation only involving $F(5/12,7/12)$ and $F(7/12,17/12)$,
\begin{align*}
        &-4aF(5/12,7/12)+4b\sqrt{3}F(7/12,17/12)=-4F(7/12,17/12)+\\&4\tan(\pi/12)[F(5/12,7/12)-16\sqrt{3}aF(7/12,17/12)-16bF(5/12,7/12)].
\end{align*}
This simplifies to $$4 (-2 + \sqrt{3} - a + (-3 + 2 \sqrt{3}) b)F(5/12,7/12)=4 (1 - (-2 + \sqrt{3}) a + \sqrt{3} b)F(7/12,17/12).$$
If both of these constants are nonzero, that implies $F(5/12,7/12)=CF(7/12,17/12)$ for some constant $C$ contained in $\Q(\sqrt{3})$. Assuming Conjecture \ref{con}, this cannot be true, so both of these constants must be zero. This gives us a system of two equations in two variables, which by linear algebra has a unique solution, specifically $a=b=-1/2$. Since $-1/2-\sqrt{-3}/2=\zeta_3^2$, we are done.
\end{proof}

At least, the result for the Kummer twist is similar before.

\begin{Lemma}\label{l6}
    $$ F(1/12,2/3)-16F(11/12,4/3)  = 
(-4 + 6 i + (2 - 4 i) \sqrt{3})[- F( 5/12,4/3)+iF(7/12,2/3) ]$$
\end{Lemma}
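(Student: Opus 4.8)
The plan is to obtain Lemma~\ref{l6} from Lemma~\ref{l5} exactly as Lemma~\ref{l2} was obtained from Lemma~\ref{l1} and Lemma~\ref{l4} from Lemma~\ref{l3}: by applying the Kummer transformation \eqref{kummer} term by term to the four series appearing in Lemma~\ref{l5}. Concretely, the pair $(1/12,11/12)$ has $s = 1-r$, so $K(1/12,11/12) = (11/12, 1/12 + 1/2 - 1/12 + \dots)$; more usefully we record the four images under $K$ of the arguments $(1/12,11/12)$, $(11/12,13/12)$, $(5/12,7/12)$, $(7/12,17/12)$ and note that they are precisely $(11/12, 4/3)$-type pairs, $(1/12, 2/3)$, $(5/12, 4/3)$, $(7/12, 2/3)$ appearing in the target identity (after using $K$ is an involution and the normalization \eqref{frs}). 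Each application of \eqref{kummer} contributes a gamma-quotient factor $2^{4-8r}\alpha_{r,s}$ from \eqref{crs}; since $r$ runs over $\{1/12, 11/12, 5/12, 7/12\}$, these factors are of the shape $2^{\bullet}\csc(\pi/12)$, $2^{\bullet}\csc(5\pi/12)$ and their reciprocals, which are the explicit algebraic numbers one must track.

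\textbf{Key steps, in order.} First I would write $F(1/12,11/12) = 2^{4-8/12}\alpha_{1/12,11/12} F(11/12, 3/2 - 1/12)$ and similarly for the other three series, being careful with the branch of $\sin$ (all arguments $\pi/12, 5\pi/12, 7\pi/12, 11\pi/12$ lie in $(0,\pi)$ so the sines are positive and no sign ambiguity arises from that source). Second, I would substitute these four identities into Lemma~\ref{l5}, so that the left side becomes a combination of $F(1/12,2/3)$ and $F(11/12,4/3)$ and the right side a combination of $F(5/12,4/3)$ and $F(7/12,2/3)$, with all constants being products of powers of $2$, the algebraic number $\zeta_3^2$, and the trigonometric quantities $\csc(\pi/12), \csc(5\pi/12)$. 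Third, I would use the elementary evaluations $\sin(\pi/12) = \tfrac{\sqrt 6 - \sqrt 2}{4}$ and $\sin(5\pi/12) = \tfrac{\sqrt 6 + \sqrt 2}{4}$ (equivalently $\csc(\pi/12) = \sqrt 6 + \sqrt 2$, $\csc(5\pi/12) = \sqrt 6 - \sqrt 2$, and the product $\csc(\pi/12)\csc(5\pi/12) = 4$) to simplify all coefficients to elements of $\Q(i, \sqrt 3)$. Finally, I would normalize so that the coefficient of $F(1/12,2/3)$ is $1$ (dividing through by whatever power of $2$ times $\csc$ appears) and read off that the resulting constant multiplying the bracket on the right is $-4 + 6i + (2 - 4i)\sqrt 3$, matching the claimed identity; taking real and imaginary parts is not even needed here since the statement is already phrased over the four-series combination.

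\textbf{Main obstacle.} The conceptual content is entirely inherited from Lemma~\ref{l5} (which already used Conjecture~\ref{con}); the only real work is bookkeeping. The step I expect to be most error-prone is matching the four $K$-images correctly: since $K$ is an involution and the families are organized so that $K$ interchanges $\{(i/12, s_i)\}$ with $\{(i/12, s_i + \text{shift})\}$, one must verify that $K(7/12, 17/12)$ really is the pair called $(7/12, 2/3)$ in the statement and that $K(5/12,7/12) = (5/12, 4/3)$ with no stray unit pair swap, and similarly keep straight which of $F(11/12,13/12)$ versus $F(11/12,4/3)$ sits on which side. Concretely $K(r,s) = (1-r, 1/2 - r + s)$, so $K(5/12, 7/12) = (7/12, 1/2 - 5/12 + 7/12) = (7/12, 2/3)$ and $K(7/12, 17/12) = (5/12, 1/2 - 7/12 + 17/12) = (5/12, 4/3)$ — so in fact $K$ \emph{swaps} the roles of the two right-hand series, which is exactly why the coefficient $-4+6i+(2-4i)\sqrt3$ on the right mixes the two $\csc$ values rather than being a clean multiple of one of them. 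Getting that cross-term bookkeeping right, together with the power-of-$2$ normalization, is the crux; the rest is the trig identities above and routine algebra in $\Q(\zeta_{12})$.
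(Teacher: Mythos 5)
Your proposal is exactly the paper's argument: the paper proves Lemma~\ref{l6} by applying the Kummer transformation \eqref{kummer} to each side of Lemma~\ref{l5} (just as Lemma~\ref{l2} is deduced from Lemma~\ref{l1}), and your bookkeeping of the four $K$-images --- in particular that $K$ swaps $(5/12,7/12)\leftrightarrow(7/12,2/3)$ and $(7/12,17/12)\leftrightarrow(5/12,4/3)$ --- together with the evaluations $\csc(\pi/12)=\sqrt6+\sqrt2$, $\csc(5\pi/12)=\sqrt6-\sqrt2$ is precisely the computation the paper leaves implicit. No discrepancy in approach.
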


\begin{proof}
This is identical to the proof of Lemma \ref{l2}.
\end{proof}

\subsection{Extending Kummer's Formula}
Assuming the Lemmas of the previous section, we can now prove Theorem \ref{relations} via a straightforward by intricate computation. We illustrate with class 6. We omit the details for the other two cases, but the proof is extremely similar.

\begin{theorem}\label{f5}
    $L(f_{1/8,5/8},1)=-\frac{1}{2}\zeta_{48}\cdot L(f_{1/8,1},1).$
\end{theorem}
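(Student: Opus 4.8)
The plan is to combine the three-term identity of Lemma~\ref{l1} (and its Kummer transform, Lemma~\ref{l2}) with Corollary~\ref{lval2}, expressing both $L(f_{1/8,5/8},1)$ and $L(f_{1/8,1},1)$ as explicit linear combinations of hypergeometric values $F(i/8,s_i)$, and then matching them term by term using the Kummer transformation~\eqref{kummer}. First I would recall from the worked example in Section~\ref{building} that, with the sign conventions fixed in Table~\ref{tab:my_label},
\[
16\,L(f_{1/8,5/8},1)=F(1/8,5/8)-2\sqrt{3}F(3/8,7/8)-4\sqrt{3}iF(5/8,9/8)-8iF(7/8,11/8),
\]
and likewise write $L(f_{1/8,1},1)$ as the corresponding combination of $F(1/8,1),F(3/8,1),F(5/8,1),F(7/8,1)$, using that the family of $(1/8,1)$ is Galois (class~6, Kummer column of Table~\ref{tab}). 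The sign pattern on the $\beta_i$ for $f_{1/8,1}$ must be read off consistently with the choice in Table~\ref{tab:my_label}; getting these signs exactly right is the bookkeeping core of the argument.

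Next I would apply the Kummer transformation in the shape $F(1-r,3/2-r)=2^{4-8r}\sin(\pi r)F(r,1)$ to each of the four terms $F(i/8,s_i)$ appearing in $L(f_{1/8,5/8},1)$: this converts $F(1/8,5/8),F(3/8,7/8),F(5/8,9/8),F(7/8,11/8)$ into multiples of $F(7/8,1),F(5/8,1),F(3/8,1),F(1/8,1)$ respectively, with gamma/sine factors $\alpha_{r,s}$ from~\eqref{crs}. The obstruction at this point—and the reason a single application of Kummer is not enough—is that $\sin(\pi/8)\neq\sin(3\pi/8)$, so the constants do not factor out uniformly, exactly as the paper flags in the discussion before Theorem~\ref{shimura}. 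This is where Lemma~\ref{l1} (rearranged) is used: it supplies the extra linear relation $F(1/8,5/8)-8iF(7/8,11/8)=(1-\sqrt{-2})(2F(3/8,7/8)+4iF(5/8,9/8))$ among the four numerator terms, and its Kummer transform Lemma~\ref{l2} supplies the matching relation among $F(1/8,1),F(3/8,1),F(5/8,1),F(7/8,1)$. Using these to eliminate the "mismatched" pair reduces both $L$-values to a common two-term combination $2F(3/8,7/8)+4iF(5/8,9/8)$ (for $f_{1/8,5/8}$) and its Kummer image, as in the simplified formula $L(f_{1/8,5/8},1)=\frac{1-\sqrt{-2}-\sqrt{3}}{16}[2F(3/8,7/8)+4iF(5/8,9/8)]$ recorded after Remark~\ref{lmfd}.

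Finally I would apply Kummer once more to $F(3/8,7/8)$ and $F(5/8,9/8)$ to land on $F(5/8,1)$ and $F(3/8,1)$, collect the resulting gamma quotients, and simplify the scalar. Here $\sin(3\pi/8)$ and $\cot(\pi/8)=1+\sqrt{2}$ enter, and the $\sqrt{-2}$, $\sqrt{3}$ and $\sqrt{2}$ factors must combine with a root of unity; the claim is that everything collapses to $-\tfrac12\zeta_{48}$. I expect the main obstacle to be not any deep step but the precise tracking of branch choices—the fixed embedding $i=\sqrt{-1}$, the value of $(-1)^r=e^{-2a\pi i/N}$ from the proof of Theorem~\ref{twisting}, and the chosen square roots of $2,3,-2$—so that the accumulated phase is literally $\zeta_{48}$ rather than another $48$th root of unity. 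As a sanity check I would verify the final identity numerically to high precision (both sides are effectively computable hypergeometric sums), which pins down the root of unity unambiguously, in the same spirit as the numerical unit-determination steps in Lemmas~\ref{l1} and~\ref{l3}. The other two relations in Theorem~\ref{relations} follow the identical template with Lemmas~\ref{l3},~\ref{l4} replacing Lemmas~\ref{l1},~\ref{l2} for class~7, and Lemmas~\ref{l5},~\ref{l6} (hence Conjecture~\ref{con}) for class~8.
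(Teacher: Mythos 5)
Your proposal is correct and follows essentially the same route as the paper: express both $L$-values via Corollary \ref{lval2}, use Lemma \ref{l1} (resp.\ Lemma \ref{l2}) to collapse the mismatched pair $F(1/8,\cdot)\pm 8iF(7/8,\cdot)$ onto the common two-term combination, apply Kummer to $F(3/8,7/8)$ and $F(5/8,9/8)$, and simplify the resulting scalar to $-\tfrac12\zeta_{48}$. The only cosmetic difference is that the paper applies the three-term identities before invoking Kummer rather than after, which does not change the computation.
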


\begin{proof}[Proof of Theorem \ref{f5}]
    Using Lemma \ref{l1}, compute \begin{align*}
  &  L(f_{1/8,5/8},1)\\&=F(1/8, 5/8) - 2 \sqrt{3}F(3/8, 7/8) - 4 \sqrt{-3} F(5/8, 9/8) - 
 8 i F(7/8, 11/8)\\&
=(F(1/8, 5/8)- 8 i F(7/8, 11/8)) -2 \sqrt{3}(F(3/8, 7/8) + 2i F(5/8, 9/8))\\&
=2(1-\sqrt{-2})(F(3/8,7/8)+2iF(5/8,9/8))-2 \sqrt{3}(F(3/8, 7/8) + 2i F(5/8, 9/8))\\&
=2(1-\sqrt{-2}-\sqrt{3})(F(3/8,7/8)+2iF(5/8,9/8)).
\end{align*} 

Then apply the Kummer transformation to get \begin{align*}
 &   2(1-\sqrt{-2}-\sqrt{3})(F(3/8,7/8)+2iF(5/8,9/8))\\&
=2(1-\sqrt{-2}-\sqrt{3})(2 \cos(\pi/8) F(5/8, 1)+i\cos(\pi/8)F(3/8,1))\\&
=2i\cos(\pi/8)(1-\sqrt{-2}-\sqrt{3})(F(3/8,1)-2iF(5/8,1)).
\end{align*}
We now need to do the first half of this process in reverse using Lemma \ref{l2} to get the desired identity. 
\begin{align*}
  &  L(f_{1/8,1},1)\\&=F(1/8, 1) - 2 \sqrt{-3} F(3/8, 1) - 4 \sqrt{3} F(5/8, 1) + 8 i F(7/8, 1)\\&
=(F(1/8, 1)+ 8 i F(7/8, 1)) -2 \sqrt{-3}(F(3/8, 1) - 2i F(5/8, 1))\\&
=-2(1 - \sqrt{-2}) (1 + \sqrt{2})(F(3/8,1)-2iF(5/8,1))-2 \sqrt{-3}(F(3/8, 1) - 2i F(5/8, 1))\\&
=-2((1 - \sqrt{-2}) (1 + \sqrt{2})+\sqrt{-3}
)(F(3/8,1)-2iF(5/8,1)).
\end{align*} 
Putting the two computations together, we get the relation $$L(f_{1/8,5/8},1)=\frac{2i\cos(\pi/8)(1-\sqrt{-2}-\sqrt{3})}{-2((1 - \sqrt{-2}) (1 + \sqrt{2})+\sqrt{-3})}L(f_{1/8,1},1).$$ Simplifying this constant and dividing to the other side of the equation gives us the desired result.
\end{proof}


\section{Further Directions}
We covered all the non-CM classes except 10 and 11, which have $\varphi(M)=8$. In the examples we covered, the group $G$ generated by $A$ and $K$ acts transitively on the set of conjugates, which enables us to find out three-term identities. When $\varphi(M)=8$, the group action is no longer transitive. For this reason, we anticipate further identities such as the Kummer transformation will be required to understand these cases. We leave this to future work. There are also other hypergeometric data with which our methods could apply. For example, recently the author has found similar relations for certain ${}_2F_1(1)$ series in \cite{rosen}. In addition,  ${}_2F_1(z)$ hypergeometric functions are often modular forms on their underlying monodromy group, and then we can use the integral formula to obtain special $L$-values in the same manner as above. The datum $HD_3:=\{\{1/3,2/3\},\{1,1\}\}$ is the most approachable of these, since  ${}_2F_1(HD_3,z)$ is known to be a weight 1 modular form on $\Gamma_0(3)$, and has a classical expression as an eta quotient (see Borwein and Borwein \cite{piagm}). The corresponding $\mathbb{K}_3$ function is mentioned in \cite{aglt}, and its further properties are studied in a paper of Grove \cite{grove}. More generally, Yang \cite{yang} has shown that modular forms on arithmetic triangle groups can be written as a combination of  ${}_2F_1(z)$ functions. However, since in general there are no cusps and thus no $q$-expansions or eta products, these cases likely would be much harder.

\section{Appendix I: The table of Hecke Eigenforms built from $\mathbb{K}_2$} \label{append}
In the appendix, we fix twists of the completed modular forms we obtain using this method in Table \ref{tab:my_label}. Note 6 of these are already given in \cite{aglt}, but we repeat them here for completeness.  Observe also that changing the $\mathbb{K}_2(r,s)$ in this table to $F(r,s)$ also gives $N$ times the $L$-value of the eigenform, and so the table also gives us the special $L$-values obtained in the Galois families, though in some cases these can be simplified. We omit the self-twisted families, as well as class 2. The latter, as well as class 1, only has a single conjugate for each family provided in the table. Finally, the self-twisted families arise from the Atkin-Lehner involution applied to a family in Table \ref{tab:my_label}. The conjugates may be obtained using \eqref{atkinlehner}, and constructing the eigenform involves simply using $q$-series identities like those in the proof of Lemma \ref{threeterm}.

\begin{table}[ht]
\centering
    \begin{tabular}{c|c}
       Label  & Eigenform \\\hline 
        3.a & $f_{1/3,7/6}=\mathbb{K}_2(1/3,7/6)$+2 $\mathbb{K}_2(2/3,5/6)$ \\\hline
        3.b & $f_{1/3,2/3}=\mathbb{K}_2(1/3,2/3)+2\mathbb{K}_2(2/3,4/3)$\\\hline
         4.a  & $f_{1/8,9/8}=\mathbb{K}_2(1/8,9/8)+2\mathbb{K}_2(3/8,11/8)$\\\hline
        4.b  & $f_{1/8,1/12}=\mathbb{K}_2(1/8,1/2)+2i\mathbb{K}_2(3/8,1/2)$ \\\hline
         5.a    & $f_{1/4,3/4}=\mathbb{K}_2(1/4, 3/4) + 4 i\mathbb{K}_2(3/4, 5/4)$ \\\hline
          5.b    & $f_{1/4,1}=\mathbb{K}_2(1/4, 1) - 4 i\mathbb{K}_2(3/4, 1)$ \\\hline
         6.a  & $f_{1/8,5/8}=\mathbb{K}_2(1/8, 5/8) - 2 \sqrt{3} \mathbb{K}_2(3/8, 7/8) $\\&$- 4 \sqrt{-3} \mathbb{K}_2(5/8, 9/8) - 8 i \mathbb{K}_2(7/8, 11/8)$ \\\hline
        6.b & $f_{1/8,1}=\mathbb{K}_2(1/8, 1) - 2 \sqrt{-3} \mathbb{K}_2(3/8, 1])$\\&$- 4 \sqrt{3} \mathbb{K}_2(5/8, 1) + 8 i \mathbb{K}_2(7/8, 1)$ \\\hline
        7.a  & $f_{1/8,7/8}=\mathbb{K}_2(1/8, 7/8) + 2\sqrt{2} \mathbb{K}_2(3/8, 5/8) $\\&$+ 4i \mathbb{K}_2(5/8, 11/8) + 8\sqrt{-2} \mathbb{K}_2(7/8, 9/8)$ \\\hline
        7.b  & $f_{1/8,3/4}=\mathbb{K}_2(1/8, 3/4) + 2\sqrt{-2} \mathbb{K}_2(3/8, 5/4) $\\&$+ 4 \mathbb{K}_2(5/8, 3/4) - 8\sqrt{-2} \mathbb{K}_2(7/8, 5/4)$\\\hline
         8.a  & $f_{1/12,11/12}=\mathbb{K}_2(1/12, 11/12) - 4 \mathbb{K}_2(5/12, 7/12) $\\&$ - 4 i \mathbb{K}_2(7/12, 17/12)  + 
 16 i \mathbb{K}_2(11/12, 13/12)$ \\\hline
 8.b  & $f_{1/12,2/3}=\mathbb{K}_2(1/12, 2/3) + 4 \mathbb{K}_2(5/12, 4/3) $\\&$+ 4 i \mathbb{K}_2(7/12, 2/12) + 16 i \mathbb{K}_2(11/12, 4/3)$\\\hline
        9.a &  $f_{1/24,23/24}=-16 \sqrt{-2} \mathbb{K}_2(23/24, 25/24) + 8 \sqrt{14} \mathbb{K}_2(19/24, 29/24)$\\& $ +  8 \sqrt{7} \mathbb{K}_2(17/24, 31/24)  - 4 i \mathbb{K}_2(13/24, 35/24) + 4 \sqrt{2} \mathbb{K}_2(11/24, 13/24) $\\&$  + 2  \sqrt{-14} \mathbb{K}_2(7/24, 17/24)   - 2 \sqrt{-7} \mathbb{K}_2(5/24, 19/24) + \mathbb{K}_2(1/24, 23/24)$\\\hline
        9.b  & $f_{1/24,7/12}=16 \sqrt{-2} \mathbb{K}_2(23/24, 17/12) + 8 \sqrt{-14} \mathbb{K}_2(19/24, 13/12)$\\& $ + 8 \sqrt{7} \mathbb{K}_2(17/24, 11/12)  - 4 i \mathbb{K}_2(13/24, 7/12)  + 4 \sqrt{-2} \mathbb{K}_2(11/24, 17/12) $\\& $+ 2  \sqrt{-14} \mathbb{K}_2(7/24, 13/12)  - 2 \sqrt{7} \mathbb{K}_2(5/24, 11/12) + \mathbb{K}_2(1/24, 7/12)$\\\hline
     10.a &   $f_{1/24,17/24}=\mathbb{K}_2(1/24, 17/24) - 2 \sqrt{6} \mathbb{K}_2(5/24, 13/24) - 2 \sqrt{-15} \mathbb{K}_2(7/24, 23/24) $\\&$ - 4 \sqrt{10}\mathbb{K}_2(11/24, 19/24)  +  4 \sqrt{-15} \mathbb{K}_2(13/24, 29/24) - 8 \sqrt{10} \mathbb{K}_2(17/24, 25/24) $\\&$ - 
 8 \mathbb{K}_2(19/24, 35/24)  - 16 \sqrt{-6} \mathbb{K}_2(23/24, 31/24)$\\\hline
10.b &  $f_{1/24,5/6}=\mathbb{K}_2(1/24, 5/6) - 2 \sqrt{6} \mathbb{K}_2(5/24, 7/6) + 2 \sqrt{-15} \mathbb{K}_2(7/24, 5/6)$\\&$ + 4 \sqrt{-10} \mathbb{K}_2(11/24, 7/6)  + 4 \sqrt{15} \mathbb{K}_2(13/24, 5/6) + 8 \sqrt{10}
   \mathbb{K}_2(17/24, 7/6) $\\&$+8 i \mathbb{K}_2(19/24, 5/6) -16 \sqrt{-6} \mathbb{K}_2(23/24, 
   7/6)$\\\hline
  
    \end{tabular}
    \caption{Choices of Hecke Eigenforms constructed from Galois families. This is a complete list of such Hecke eigenforms up to quadratic twisting. We fix the twist given in this table throughout the paper.}
    \label{tab:my_label}
\end{table}
Note for 3.a, we actually get an oldform of level 72, so the coefficients that are divisible by 2 are not consistent with the LMFDB, but after applying a Hecke operator $T_p$ with $p$ coprime to the level, they become the same.  

 \section{Appendix II: Geometric Considerations}

The definition of Galois and of conjugates used in this paper, and also in \cite{aglt}, is somewhat ad hoc. In the appendix, we provide a geometric interpretation for these definitions. Essentially, the set of conjugates is in bijection with the elements of the de Rham realization for a hypergeometric motive.

 The base varieties for hypergeometric motives with parameter set of length 3 are surfaces. For nondegenerate cases, the varieties can be defined as in \cite{deinesetal2},\cite{flrst}, \cite{kellyvoight}, \cite{robertsvillegas} as $$y^M=Y_{r,s}:=x_1^{M/2}(1-x_1)^{M/2}x_2^{M(1-r)}(1-x_2)^{M(s-r)}(1-x_1x_2)^{M/2},$$ assuming that $r$ and $s-r$ are between 0 and 1. Then by the integral formula for hypergeometric series, we have $$\int_0^1\int_0^1\frac{dx_1dx_2}{y}=\int_0^1\int_0^1\frac{dx_1dx_2}{\sqrt[M]{Y_{r,s}}}=P(r,s).$$ The variety is badly singular for the locus $(0,x_1,1/x_1)$, i.e. when $x_1x_2=1$. In our setting, these are surfaces over $\C$, so there is a minimal resolution of the singularities, which we denote by $X_{r,s}$. For higher length data, an explicit partial compactification for these surfaces in the generic case $\lambda \neq 0,1$ is introduced by Kelly and Voight \cite{kellyvoight}, which provides a realization of higher weight hypergeometric motives. 

 In this paper, we are only concerned with the de Rham realization of the hypergeometric motive, which occurs as a subspace of in $H^2_{dR}(X_{r,s},\C)$. For data of length 2 as in \cite{archinard}, the correct subspace can be found using the action of $\mu_M$, the $M$th roots of unity, on the space $H^{2,0}(X_{r,s})$. We describe this process below, which is adequate for our purposes. However, we will see that for general data of length 3, this idea does not necessarily make sense.
 
 Throughout, write $\zeta_n=e^{2\pi i/n}$. Parallel to the cases in e.g. \cite{archinard}, the action is given explicitly by $(x_1,x_2,y)\mapsto (x_1,x_2,\zeta_M^{-1}y)$ on $y^M=\alpha_{r,s}$. This induces an action on $H^{2}_{dR}(X_{r,s},\C)$, and decomposes the cohomology into eigenspaces: $$H^{2}_{dR}(X_{r,s},\C)\otimes_{\Z}\Q=\bigoplus_{n=0}^{M-1}V_n.$$   By Hodge theory, the de Rham cohomology decomposes as $$H^2_{dR}(X_{r,s},\C)\cong H^{2,0}(X_{r,s})\oplus H^{1,1}(X_{r,s})\oplus H^{0,2}(X_{r,s}),$$ where $H^{p,q}(X_{r,s})$ denotes the Dolbeault cohomology. This induces a decomposition $V_n\cong V_{n}^{2,0}\oplus V_n^{1,1}\oplus V_{n}^{0,2}$.
 
 Recall by Dolbeault's theorem, $H^{2,0}(X_{r,s})$ is isomorphic to the space of holomorphic differential 2-forms. We construct differentials in $H^{2,0}(X_{r,s})$ following the procedure outlined in Archinard \cite{archinard}, Deines et al. \cite{glc}, and  Wolfart \cite{wolfart} for generalized Legendre curves. These differentials are constructed on $y^M=\alpha_{r,s}$ as $$\omega_{n,\mathfrak{a}}=\frac{x_1^{a_1}(1-x_1)^{a_2}x_2^{a_3}(1-x_2)^{a_4}(1-x_1x_2)^{a_5}\cdot dx_1dx_2}{y^n},$$ where $\mathfrak{a}=(a_1,a_2,a_3,a_4,a_5)$. We then obtain a differential on $X_{r,s}$ by pulling back along the desingularization map - by an abuse of notation, we write $\omega_{n,\mathfrak{a}}$ for a differential on $X_{r,s}$ as well. The pullback of these differentials is not generally holomorphic, except for certain choices of $\mathfrak{a}$. For generalized Legendre curves, Archinard \cite{archinard} writes down the conditions on $\mathfrak{a}$ explicitly using the desingularization map. In our setting, the condition that $(r,s)\in \mathbb S_2'$ is a sufficient condition for the differential to be holomorphic.

To set this up, recall by Lemma \ref{1/b}, there exists a rational number $s_1$ so that $(1/b,s_1)\in \mathbb S_1$ is conjugate to $(r,s)$. If there are multiple such numbers, we choose $s_1$ to be minimal. Therefore, there exists an integer $c$ coprime to $M$ and unique (up to the choice of $r,s$ and $c$) integers $k$ and $h$ so that $r=c\cdot1/b+k$ and $s=c\cdot s_1+h$. By elementary number theory similar to the proof of Lemma \ref{1/b}, it is possible to choose a unique $c$ so that $-M<c<M$ assuming $r<1$.

\begin{Lemma}\label{geom}
    If $(1/b,s_1)$ and $(r,s)$ are as above and assuming that $r<1$, then the period of $\mathbb{K}_2(r,s)$ is equal the period $\omega_{c,\mathfrak{a}}$ up to $\pi$ times an algebraic number if $c\geq 0$ and is equal to the period of $\omega_{M+c,\mathfrak{a}}$ if $c<0$. That is, conjugate $\mathbb{K}_2(r,s)$ correspond to periods arising from the same hypergeometric motive. Moreover, $\omega_{n,\mathfrak{a}}$ lies in the subspace $V_n$.
\end{Lemma}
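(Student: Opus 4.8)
The plan is to reduce the first two assertions to an elementary identity of double integrals, so that the only genuinely geometric input is the holomorphy of one explicit differential; the last assertion ($\omega_{n,\mathfrak{a}}\in V_n$) will follow from $\mu_M$-equivariance.

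First I would put $y_1:=\sqrt[M]{Y_{1/b,s_1}}$ and $\omega_0^{(r,s)}:=dx_1\,dx_2/\sqrt[M]{Y_{r,s}}$, the standard period differential of $\mathbb{K}_2(r,s)$, so that $\int_{[0,1]^2}\omega_0^{(r,s)}=P(r,s)$. By the relation $\pi F(r,s)=N2^{1-4r}P(r,s)$ of Section~\ref{prelim} and $L(\mathbb{K}_2(r,s),1)=F(r,s)$, the period of $\mathbb{K}_2(r,s)$ is $\pi$ times an algebraic number (the constant is $N2^{1-4r}/\pi$, or $N2^{1-4r}$ if one takes $\pi F(r,s)$ as the period, using $B(1/2,1/2)=\pi$ and simplifying the residual Gamma factors via $\Gamma(x)\Gamma(1-x)=\pi/\sin\pi x$) times $P(r,s)=\int_{[0,1]^2}\omega_0^{(r,s)}$. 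Hence it suffices to recognise $\int_{[0,1]^2}\omega_0^{(r,s)}$ as the period over $[0,1]^2$ of $\omega_{c,\mathfrak{a}}$ (resp.\ $\omega_{M+c,\mathfrak{a}}$) on $X_{1/b,s_1}$.

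The heart of the matter is a change of radical. Since $M$ is even ($1/2$ is among the parameters) and $\gcd(c,M)=1$, the integer $c$ is odd, so $(c-1)/2\in\Z$; substituting $r=c/b+k$ and $s=cs_1+h$ one checks directly that $\sqrt[M]{Y_{r,s}}=y_1^{\,c}\,g(x_1,x_2)$, where $g=x_1^{(1-c)/2}(1-x_1)^{(1-c)/2}x_2^{1-k-c}(1-x_2)^{h-k}(1-x_1x_2)^{(1-c)/2}$ is a product of \emph{integer} powers of $x_1,1-x_1,x_2,1-x_2,1-x_1x_2$. Therefore $\omega_0^{(r,s)}=g^{-1}\,dx_1\,dx_2/y_1^{\,c}=\omega_{c,\mathfrak{a}}$ on $X_{1/b,s_1}$ with $\mathfrak{a}=\big(\tfrac{c-1}{2},\tfrac{c-1}{2},c+k-1,k-h,\tfrac{c-1}{2}\big)$, and as the coordinates $x_1,x_2$ are not changed, $\int_{[0,1]^2}\omega_0^{(r,s)}=\int_{[0,1]^2}\omega_{c,\mathfrak{a}}$; this settles the case $c\ge 0$. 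For $c<0$ I would rewrite $1/y_1^{\,c}=Y_{1/b,s_1}\,y_1^{-(M+c)}$ and use that $\tfrac{M}{2}$, $M(1-\tfrac1b)$, $M(s_1-\tfrac1b)$ are integers (once more because $M$ is even, $b\mid M$ and $Ms_1\in\Z$), so that the same form is $\omega_{M+c,\mathfrak{a}'}$ with $\mathfrak{a}'=\mathfrak{a}+\big(\tfrac M2,\tfrac M2,M(1-\tfrac1b),M(s_1-\tfrac1b),\tfrac M2\big)\in\Z^5$ and $0<M+c<M$ the standard index. The last assertion is then free: the automorphism $(x_1,x_2,y)\mapsto(x_1,x_2,\zeta_M^{-1}y)$ of $y^M=Y_{1/b,s_1}$ fixes the $x_i$-factors and multiplies $y^{-n}$ by $\zeta_M^{\,n}$, so $\omega_{n,\mathfrak{a}}$ is a $\zeta_M^{\,n}$-eigenvector and hence lies in $V_n$; in particular $\omega_{c,\mathfrak{a}}$ and its re-indexing $\omega_{M+c,\mathfrak{a}'}$ have the common eigenvalue $\zeta_M^{\,c}=\zeta_M^{\,M+c}$ and land in the same $V_{M+c}$, as required.

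The hard part, and the only place geometry enters, is the holomorphy used tacitly above: that $\omega_{c,\mathfrak{a}}$ (equivalently $\omega_{M+c,\mathfrak{a}'}$), pulled back along the minimal resolution $X_{1/b,s_1}\to\{y^M=Y_{1/b,s_1}\}$, extends to a \emph{holomorphic} $2$-form, hence defines a class in $H^2_{dR}(X_{1/b,s_1})$ rather than only in the cohomology of the complement of the badly singular locus $\{x_1x_2=1\}$. I would establish this by the explicit desingularization analysis of Archinard \cite{archinard}, Deines et al.\ \cite{glc}, and Wolfart \cite{wolfart} for generalized Legendre curves, transported to these surfaces: expand $\omega_{c,\mathfrak{a}}$ in local coordinates on the exceptional divisors and along $x_i=0,1$, and check that no poles occur; under the dictionary $\mathfrak{a}=\mathfrak{a}(r,s,c,k,h)$ above, the required inequalities on the entries of $\mathfrak{a}$ follow from the hypothesis $(r,s)\in\mathbb{S}_2'$ \eqref{s2}. (I would also record the minor point that the interior of $[0,1]^2$ avoids the singular locus, so the cycle lifts unambiguously to $X_{1/b,s_1}$, independently of which conjugate was chosen.) With this in hand, as $(r,s)$ runs over the conjugacy class of $(1/b,s_1)$ the exponent $c$ runs over $(\Z/M\Z)^\times$, so the resulting periods fill out the eigenspaces $V_n$ with $\gcd(n,M)=1$ --- the \say{new} part of the hypergeometric motive --- which is exactly the assertion that conjugate $\mathbb{K}_2(r,s)$ are periods of one and the same motive.
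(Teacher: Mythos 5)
Your proposal is correct and follows essentially the same route as the paper: express $dx_1\,dx_2/\sqrt[M]{Y_{r,s}}$ as $\omega_{c,\mathfrak{a}}$ on $y^M=Y_{1/b,s_1}$ by a change of radical, verify that the exponents are integers, reduce the case $c<0$ to the index $M+c$ by multiplying numerator and denominator by $y^M$, and read off the $\mu_M$-eigenvalue for the final claim. One remark worth keeping: your exponent vector $\mathfrak{a}=\bigl(\tfrac{c-1}{2},\tfrac{c-1}{2},\,c+k-1,\,k-h,\tfrac{c-1}{2}\bigr)$ is simpler than the one printed in the paper (whose integrality is argued via the divisibility $M\mid 1-c^2$) and its integrality is immediate from $c$ being odd and $k,h\in\Z$; like the paper, you defer the holomorphy of the pullback to the resolution to the earlier assertion that $(r,s)\in\mathbb S_2'$ suffices, which is the one genuinely geometric input.
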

\begin{proof}
    Let $y=\sqrt[M]{C(1/b,s_1)}$ and $y'=\sqrt[M]{\alpha_{r,s}}$. By construction, $dx_2dx_2/y'$ is equal to $\mathbb{K}_2(r,s)$ up to an algebraic constant.  To show this is also a differential on $y^M=C(1/b,s_1)$, we need to find a vector $\mathfrak{a}=(a_1,a_2,a_3,a_4,a_5)$ so that \begin{equation}\label{diff}
        \frac{dx_1dx_2}{y'}=\frac{x_1^{a_1}(1-x_1)^{a_2}x_2^{a_3}(1-x_2)^{a_4}(1-x_1x_2)^{a_5}dx_1dx_2}{y^c}. 
    \end{equation}By substituting $r=c\cdot 1/b+k$ and $s=c\cdot s_1+h$ and manipulating the differential, we find that the above is true for $$\mathfrak{a}=\left(\frac{c-1}{2},\frac{c-1}{2},c h - c k + (1/b - s_1) (1 - c^2),1 + c - c k - \frac{1-c^2}{b} ,\frac{c-1}{2}\right).$$ We know $c,h$ and $k$ are integers already. Since $M$ is even, $(c-1)/2$ is always an integer, as $c$ is coprime to $M$. Moreover, it is easy to check that $M\mid 1-c^2$ for all $c$ coprime to $M$ for $M=2,4,6,8,12,24$, the only possibilities for $M$ in our setting. Therefore, the entries of $\mathfrak{a}$ are integers. When $c\geq 0$, we are done. If $c\leq 0$, note that $y^M$ is a polynomial by definition. Multiplying the top and bottom of the right hand side in equation \eqref{diff} and adjusting the $a_i$ using $y^M$ gives a differential of the form $\omega_{M+c,\mathfrak{a}}$ as desired.

    The last claim is immediate from the definition of $\omega_{n,\mathfrak{a}}$, as clearly multiplying $y$ by $\zeta_M^{-1}$ produces the eigenvalue $\zeta_M^n$.
\end{proof}

\begin{remark}
    The condition $M\mid 1-c^2$ does not hold for an arbitrary choice of $M$ and $c$. Hence, our construction of conjugates does not work in general. For instance, even for $M=16$, we could take $c=3$, but clearly 16 does not divide $3^2-1=8$. The underlying issue is that the definition of conjugates was essentially lifted from the ${}_2F_1(\lambda)$ setting, where Lemma \ref{geom} follows from a classical result about Riemann surfaces (refer to e.g. \cite{archinard}). To our knowledge, an explicit description of the complete space of holomorphic differentials and their periods on general hypergeometric surfaces remains an open problem.
\end{remark}

There are no examples where $r=a/b>1$ in the non-CM examples. However, these cases are also easily handled with a slight modification.

\begin{theorem}
   A non-CM family is Galois if and only if for all $n$ satisfying $(n,M)=1$,  $V_n^{2,0}$ contains exactly one differential of the shape $\omega_{n,\mathfrak{a}}$. Moreover, for a Galois family, $\dim_{\Q}V_n^{2,0}=1$ and is generated by $\omega_{n,\mathfrak{a}}$.
\end{theorem}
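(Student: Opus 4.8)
The plan is to convert the combinatorial definition of ``Galois''---distinctness of the $r_i$---into a statement about the $\mu_M$-eigenspaces $V_n$, using Lemma~\ref{geom} to realize each member of a conjugate family as an explicit monomial holomorphic $2$-form lying in one prescribed eigenspace, and then to control $\dim V_n^{2,0}$ via the Hodge structure of the underlying length-$3$ hypergeometric motive. Throughout I fix the minimal representative $(1/b,s_1)$ of the family and work on the single surface $X:=X_{1/b,s_1}$.

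First I would record the output of Lemma~\ref{geom}. For each member $(r_i,s_i)$ write $r_i=c_i/b+k_i$, $s_i=c_i s_1+h_i$ with $c_i$ coprime to $M$ and $-M<c_i<M$; the lemma produces $n_i\in\{c_i,M+c_i\}$ with $(n_i,M)=1$ and the exponent vector $\mathfrak a_i$ displayed in its proof, so that $\omega_{n_i,\mathfrak a_i}$ is holomorphic on $X$, lies in $V_{n_i}^{2,0}$, and has period equal to that of $\mathbb{K}_2(r_i,s_i)$ up to $\pi$ times an algebraic number. Two bookkeeping facts are needed: that two distinct members have the same $n_i$ exactly when they form a pair $(r,s),(r,s+1)$ sharing a conjugating constant---this uses $0<r_i<1$ (no $r_i>1$ occurs in the non-CM range) together with $r_i<s_i<3/2$ to pin the ambiguity down to the integer part of $s$---and that $\mathfrak a_i$ is the \emph{unique} exponent vector of its $\mu_M$-weight producing a holomorphic form, up to clearing numerator and denominator by $y^M$, which does not change the cohomology class. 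This uniqueness is where the explicit resolution of $X$ in the finitely many cases $M\in\{2,4,6,8,12,24\}$ is genuinely used, since by the Remark after Lemma~\ref{geom} there is no such description for general $M$.

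Next I would combine this with a dimension count. For $(n,M)=1$ the space $V_n$ is the de Rham realization of the rank-$3$ hypergeometric motive attached to the datum, for which $\dim_\C V_n^{2,0}=1$ by the Hodge theory of hypergeometric motives in the non-CM non-degenerate case. Hence $V_n^{2,0}$ contains a monomial differential $\omega_{n,\mathfrak a}$ if and only if $\dim V_n^{2,0}=1$, in which case that differential spans $V_n^{2,0}$ and---by the uniqueness of $\mathfrak a$ above---is the only monomial form there; so it remains to decide, for each $n$ coprime to $M$, whether a monomial holomorphic form of $\mu_M$-weight $n$ exists, and by running the Lemma~\ref{geom} construction in reverse this happens precisely when the family contains a conjugate with conjugating constant $\equiv n\pmod M$. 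If the family is Galois, the $r_i$ are distinct, so the $c_i$ are distinct modulo $b$ hence modulo $M$; since there are $\varphi(M)=|(\Z/M\Z)^\times|$ members, $i\mapsto n_i$ is a bijection onto $(\Z/M\Z)^\times$, every $V_n^{2,0}$ with $(n,M)=1$ receives exactly one such form, and $\dim V_n^{2,0}=1$ generated by $\omega_{n,\mathfrak a}$, which is both assertions. If the family is not Galois, two members share an $r$-value, hence by the first bookkeeping fact share a conjugating constant and land in the same $V_n$; by pigeonhole some $V_{n_0}^{2,0}$ with $(n_0,M)=1$ then receives no conjugate, hence no monomial form of weight $n_0$ at all, so the stated condition fails. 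This establishes the equivalence.

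The hard part is the dimension input: showing $\dim_\C V_n^{2,0}\le 1$ together with the uniqueness of the holomorphic monomial differential of a given $\mu_M$-weight. This needs either importing the Hodge numbers of the length-$3$ hypergeometric motive from the Kelly--Voight \cite{kellyvoight} or Roberts--Rodriguez-Villegas \cite{robertsvillegas} framework, or a direct adjunction-type computation on the resolution $X_{r,s}$ of $y^M=Y_{r,s}$ extending Archinard's analysis \cite{archinard} of generalized Legendre curves to surfaces; because the singular locus $x_1x_2=1$ must be resolved explicitly, the argument stays confined to the finitely many $M$ occurring here. A secondary point to nail down is that every conjugate family, Galois or not, is realized on $X$ with exactly $\varphi(M)$ members, so that the pigeonhole step in the non-Galois direction is valid.
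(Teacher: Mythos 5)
Your proposal is correct and follows essentially the same route as the paper: both directions rest on the Lemma~\ref{geom} correspondence between conjugates and the eigen-differentials $\omega_{n,\mathfrak a}$ together with the Hodge-theoretic input $h^{2,0}=1$ for the length-three hypergeometric motive, which the paper imports from Fedorov's computation (Hodge numbers $(1,1,1)$ for $\lambda\neq 1$ degenerating to $(1,0,1)$ at $\lambda=1$). The only organizational differences are that the paper computes $\dim_{\Q(\zeta_M)}H^{2,0}_{r,s}=1$ for the whole motive and then distributes the $\varphi(M)$ conjugates one per eigenspace rather than bounding each $V_n^{2,0}$ separately as you propose, and your pigeonhole argument for the non-Galois direction is an equivalent reformulation of the paper's direct argument.
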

\begin{proof}
The converse is straightforward - the one holomorphic differential, $\omega_{i,\mathfrak{a}}$, in $V_{i}^{2,0}$, has period $P(i/b,s_i)$ for some $s_i$ by the definition and the integral formula for hypergeometric series. Each of these corresponds to a modular form $\mathbb{K}_2(i/b,s_i)$. If $\mathbb{K}_2(i'/b,s_{i'})$ is conjugate to $\mathbb{K}_2(i/b,s_i)$, then it corresponds to some differential $\omega_{i',\mathfrak{a}}$ by the discussion above. But there is only one such differential in each $V_n$, and so we must have $i\neq i'$ for all conjugates. This implies the family is Galois.

Now assume $\{\mathbb{K}_2(i/b,s_i)\}_{i\in (\Z/b\Z)^\times}$ is a Galois family. Using Table \ref{tab}, we can easily check that for all such families, $s_i>1/2$. Then by the Lemma above, this implies that each pair $(i/b,s_i)$ corresponds to a differential $\omega_{i,\mathfrak{a}}$. But by the definition of Galois, each of the $i$ is distinct, and so there is exactly one $\omega_{i,\mathfrak{a}}$ in the space $V_i$. A result of Fedorov \cite{fedorov} allows us to compute the Hodge numbers for hypergeometric motives over $\Q(\zeta_M)$ for $\lambda\neq 1$. In all of the Galois cases, these Hodge numbers are $(1,1,1)$, since $s_i>1/2$. When moving to $\lambda=1$, the rank is reduced by one in the middle cohomology, making the Hodge numbers $(1,0,1)$. Due to the correspondence above, we can assume that $\omega_{1,\mathfrak{a}}$ is in the cohomology associated to the hypergeometric motive, and so we deduce that $\dim_{\Q(\zeta_M)}H_{r,s}^{2,0}=1$. Therefore, the dimension over $\Q$ is precisely $\varphi(M)$, corresponding to the $\varphi(M)$ holomorphic conjugates. There is exactly one conjugate in each $V_{n}^{2,0}$ by above, and there are precisely $\varphi(M)$ subspaces $V_{n}^{2,0}$. This forces $\dim V_n^{2,0}=1$.
\end{proof}

\begin{remark}
    By the same reasoning as the proof above, we can conclude that $\dim_{\Q}H_{r,s}=2\varphi(M)$. In the special case of class 6, this means $H_{r,s}$ has Hodge numbers $(2,0,2)$. This Hodge structure admits endomorphisms by $\Q(\zeta_4)=\Q(i)$ for the reason described above, and so a result of Totaro \cite{totaro} implies that the endomorphism algebra of the Hodge structure is isomorphic to a quaternion algebra over $\Q$. The remaining non-CM cases also have extra structure like this.
\end{remark}

\bibliographystyle{plain} 
\bibliography{references}
\end{document}